 \newtheorem{theorem}{Theorem}[section]
 \newtheorem{corollary}{Corollary}[section]
 \newtheorem{lemma}{Lemma}[section]
 \newtheorem{proposition}{Proposition}[section]
 \newtheorem{remark}{Remark}[section]
 \numberwithin{equation}{section}
\def\A{\mathcal{A}}
\newcommand{\nn}{\nonumber}
\newcommand{\beq}{\begin{equation}}
\newcommand{\eeq}{\end{equation}}
 \def\non{\nonumber }
\def\bea{\begin{eqnarray}}
\def\eea{\end{eqnarray}}
\def\L{\mathcal{L}}
\begin{document}
\title{Global Existence and Uniform Boundedness in a Fully Parabolic 
Keller--Segel System with Non-monotonic Signal-dependent Motility}

\author{Yamin Xiao\thanks{The Graduate School of China Academy of Engineering 
	Physics, Beijing 100088, P.R. China, \textsl{xiaoyamin20@gscaep.ac.cn}.},
\ Jie Jiang\thanks{Innovation Academy for Precision Measurement Science 
	and Technology, CAS,
	Wuhan 430071, HuBei Province, P.R. China,
	\textsl{jiang@apm.ac.cn}.}}
\date{\today}

\maketitle

\begin{abstract} 
This paper is concerned with  global solvability of a fully parabolic system 
of Keller--Segel-type involving non-monotonic signal-dependent motility. First, 
we 
prove global 
existence of classical solutions to our problem with generic positive motility function under a
certain smallness assumption at infinity, which however permits the motility function to be  arbitrarily large  within a finite region.
Then   uniform-in-time 
boundedness of classical solutions is established whenever the motility 
function has strictly positive lower and upper bounds in any dimension $N\geq1$, 
or 
decays at a certain slow
 rate at infinity for $N\geq2$. Our results   remove the crucial
non-increasing requirement on the motility function in some recent work 
\cite{JLZ22,FJ19b,FS22} 
and hence 
allow for both 
chemo-attractive and chemo-repulsive effect, or their co-existence in applications. The 
 key 
ingredient of our
 proof lies in an important improvement  of the comparison method developed 
 in  \cite{JLZ22,FJ19b,LJ21}.

\end{abstract}	

	{\bf Keywords}: Classical solutions, global existence, boundedness, 
	Keller--Segel models, comparison.\\

\section{Introduction}	
In this paper, we consider global existence of classical solutions to an 
initial-Neumann boundary value problem of the following Keller--Segel system 
:
\begin{subequations}\label{cp}
\begin{align}
	&	u_t=\Delta(u\gamma(v))\,, \qquad &(x,t)\in \Omega\times (0,\infty)\,, 
	\label{cp1}\\
	&\tau v_t-\Delta v+ v=u\,, \qquad  &(x,t)\in \Omega\times 
	(0,\infty)\,,\label{cp2}\\
	& \nabla (u\gamma(v)) \cdot \mathbf{\nu} = \nabla v \cdot \mathbf{\nu} = 
	0\,, \qquad  &(x,t)\in \partial\Omega\times (0,\infty)\,, \label{cp3} \\
	& {u(x,0)  = u_0(x),\;\; v(x,0)  = v_0(x)}, \qquad 
	&x\in\Omega\,. \label{cp4}
\end{align}
\end{subequations} Here, $u$ and $v$ denote the density of cells and the signal 
concentration, respectively. $\tau>0$ is a positive constant. 
$\Omega\subset\mathbb{R}^N$ with $N\ge1$ is a  bounded  domain with smooth 
boundary. 
The above system 
features a signal-dependent motility function $\gamma(\cdot)$, which satisfies
\begin{equation}
	\gamma\in C^3((0,\infty))\,, \quad \gamma>0\, \text{ in  }\;\; 
	(0,\infty)\,. \tag{A0} \label{g1}
\end{equation}

This PDE system was originally proposed  by Keller and Segel in their seminal 
work 
\cite{KSb}, which models the oriented movement of cells due to a local sensing 
mechanism for chemotaxis. The sign of $\gamma'$ corresponds to different 
chemotaxis phenomenon: cells are attracted by signals when $\gamma'<0$, and are 
repelled when $\gamma'>0$. 

Recently, an extended three-component chemotaxis system of 
\eqref{cp1}-\eqref{cp2} has been developed in \cite{Sciencs11}, where a third 
variable $n$ denoting the nutrient level is introduced. The full system {reads as}
\begin{subequations}\label{cpn}
	\begin{align}
		& \partial_t u  = \Delta (u \gamma(v)) + \theta u f(n)\,, \qquad  
		&(x,t)\in \Omega\times (0,\infty)\,, \label{cpn1} \\
		&	\tau \partial_t v=\Delta v -  v  +  u\,, \qquad  &(x,t)\in 
		\Omega\times (0,\infty)\,, \label{cpn2} \\
		& \partial_t n= \Delta n - \theta u f(n) \,, \qquad  &(x,t)\in 
		\Omega\times (0,\infty)\,. \label{cpn3} 
	\end{align}
\end{subequations}
In their work, $\gamma$ is assumed to be a positive non-increasing function, which stands for a repressive effect of the signal on cellular motility. By numerical and experimental analyses, it is  shown that the system can foster spatially periodic patterns in a growing bacteria population merely under this motility control. We note that setting $\theta=0$ in \eqref{cpn} cancels the coupling and hence we recover system \eqref{cp1}-\eqref{cp2} for $(u,v)$.

The mathematical analysis for the above Keller--Segel systems with 
signal-dependent motility has attracted a lot research interest in recent
years. We refer the readers to \cite{JLZ22} for a detailed review on this 
topic where the non-increasing monotonicity of $\gamma$  is mostly assumed. {For general $\gamma$ satisfying \eqref{g1}, $\gamma'\leq0$ and $\lim\limits_{s\rightarrow\infty}\gamma(s)<1/\tau$, global existence of unique classical solution  to problem \eqref{cp}  was established in \cite{FJ19b} when $N=2$, and in \cite{FS22} when $N\geq3$.}

 In 
contrast, existence theories without the monotonicity requirement are largely 
missing in the literature. On the one hand, if $\gamma$ has strictly positive bounds from 
above and 
  below, and moreover $|\gamma'|$ is bounded, Tao and Winkler \cite{TaoWin17} 
  prove that there exists a unique globally bounded classical solution for 
  $N=2$, and global weak solution is constructed for $N\geq3$ provided that $\Omega$ is convex. If the motility 
  is 
  permitted to approach zero at infinity, global existence of (very) weak 
  solutions is established if $\gamma$ either  satisfies  
  $\sup\limits_{s>0}\gamma(s)<1/\tau$ in \cite{LJ21}, or exhibits an algebraic 
  decay at infinity\cite{DLTW22}.
  
  On the other hand, it is worth noting that for the 
  parabolic-elliptic 
  version of \eqref{cp} with $\tau=0$, existence of classical solution is 
  verified  simply under the assumption \eqref{g1} and that 
  $\sup\limits_{s\in[a,\infty)}\gamma(s)<\infty$ for all $a>0$ (or equivalently, $\limsup\limits_{s\rightarrow\infty}\gamma(s)<\infty$), without the need of  non-increasing 
property. However, a corresponding theory for the fully parabolic case is still open. Thus, there arises a natural question that whether the monotonicity requirement is removable or replaceable to ensure the 
  existence of global classical solutions for the fully parabolic system.

  The first task of this work is to give an affirmative answer to the above question. More precisely, we establish the following existence result.
\begin{theorem}\label{TH0} Let $N\ge 1$. Suppose that $\gamma$ satisfies assumptions~\eqref{g1}, and that the initial condition $(u_0, v_0)$ satisfies 
\begin{equation}\label{ini}
(u_0, v_0)\in C^0(\bar\Omega)\times W^{1,\infty}(\Omega),\quad u_0\geq0,\;v_0>0 
\quad  \mbox{in } \bar\Omega, \quad u_0\not\equiv0.\end{equation}
Moreover, we assume 
\begin{equation}\label{gsup}\tag{A1}
	\gamma_\infty:=\limsup\limits_{s\rightarrow\infty} \gamma(s)<1/\tau.
\end{equation}
Then problem~\eqref{cp} has a unique global non-negative classical solution 
$(u,v)\in C( \bar{\Omega}\times [0,\infty);\mathbb{R}^2) \cap C^{2,1}( 
\bar{\Omega}\times (0,\infty);\mathbb{R}^2)$.
\end{theorem}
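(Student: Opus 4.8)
The plan is to construct the solution via a standard local existence/continuation argument and then establish the a priori estimates that prevent blow-up in finite time, with the key ingredient being a comparison-type argument to control the singular or degenerate behavior of $u\gamma(v)$.

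\medskip

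\textbf{Step 1: Local existence and a continuation criterion.}  First I would establish local well-posedness of \eqref{cp} in a suitable H\"older or Sobolev framework.  Writing the equation for $u$ as $u_t = \Delta(u\gamma(v))$, one can treat this as a (possibly degenerate) parabolic problem for $u$ coupled to the uniformly parabolic equation \eqref{cp2} for $v$.  Since we only assume $\gamma>0$ and $\gamma\in C^3$, and $v_0>0$ on the compact set $\bar\Omega$, there is $\eta_0>0$ with $v_0\ge \eta_0$; a maximum-principle argument for \eqref{cp2} shows $v$ stays bounded below by a positive constant on any time interval on which the solution exists and $u\ge0$, so the coefficient $\gamma(v)$ is non-degenerate as long as $v$ does not escape to $+\infty$ or collapse to $0$.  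The standard machinery (Amann's theory for quasilinear parabolic systems, or a fixed-point argument on $w:=u\gamma(v)$) then yields a unique maximal classical solution on $[0,T_{\max})$, together with the continuation criterion: if $T_{\max}<\infty$ then
\[
\limsup_{t\uparrow T_{\max}}\Bigl(\|u(\cdot,t)\|_{L^\infty(\Omega)} + \|v(\cdot,t)\|_{W^{1,\infty}(\Omega)} + \bigl\|1/v(\cdot,t)\bigr\|_{L^\infty(\Omega)}\Bigr)=\infty.
\]
Non-negativity of $u$ and positivity of $v$ follow from the maximum principle, and mass conservation $\int_\Omega u(\cdot,t)\,dx=\int_\Omega u_0\,dx=:m$ comes from integrating \eqref{cp1} and using the no-flux condition \eqref{cp3}.

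\medskip

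\textbf{Step 2: Lower bound for $v$ and an $L^\infty$ bound for $v$ in low dimensions / upper-bound control.}  From \eqref{cp2} and $u\ge0$, the function $v$ is a supersolution of $\tau v_t=\Delta v-v$, hence $v(\cdot,t)\ge e^{-t/\tau}\min_{\bar\Omega}v_0>0$ locally in time; combined with duality/heat-semigroup estimates using $u\in L^\infty(0,T;L^1)$, one gets quantitative lower bounds for $v$ on bounded time intervals, ruling out the $1/v$ blow-up alternative.  This reduces the continuation criterion to controlling $\|u\|_{L^\infty}$ (the $W^{1,\infty}$ bound on $v$ will follow from parabolic regularity for \eqref{cp2} once $u$ is controlled in a high enough $L^p$).

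\medskip

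\textbf{Step 3 (the crux): the comparison/auxiliary-function argument exploiting \eqref{gsup}.}  This is where the smallness condition $\gamma_\infty<1/\tau$ enters and where the main difficulty lies, since without monotonicity of $\gamma$ one cannot use the energy/Lyapunov structure available in \cite{FJ19b,FS22}.  The idea, following and improving the comparison method of \cite{JLZ22,FJ19b,LJ21}, is to introduce the auxiliary variable $w:=u\gamma(v)$ (or a related combination such as $w=u\gamma(v)+\lambda v$ or an integral $\int_0^v$ of a cleverage function), and to derive a scalar parabolic differential inequality for $w$ whose coefficients are controlled \emph{a priori} by the $L^1$ bound on $u$ and the lower bound on $v$.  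Concretely, one computes $w_t-\gamma(v)\Delta w = (\text{terms involving }\gamma'(v)\,v_t\,u,\ \gamma''(v)|\nabla v|^2 u,\dots)$, substitutes $\tau v_t = \Delta v - v + u$ and $\Delta v = \tau v_t + v - u$, and organizes the right-hand side so that the ``bad'' term $-\frac{\gamma'(v)}{\gamma(v)}\,u\cdot u$-type contribution has a favourable sign or is absorbed when $\gamma_\infty<1/\tau$; the condition \eqref{gsup} is precisely what makes a quantity like $1-\tau\gamma(v)$ bounded below by a positive constant for $v$ large, while on the compact range $v\in[\eta,M]$ one simply uses that $\gamma$ and its derivatives are bounded.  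Running a Moser-type iteration, or testing the inequality for $w$ with $w^{p-1}$ and letting $p\to\infty$, then yields $\|w\|_{L^\infty(\Omega\times(0,T))}\le C(T)$, and since $w=u\gamma(v)$ with $\gamma(v)$ bounded below on bounded time intervals (because $v$ is bounded above there — itself obtained by bootstrapping from the $L^1$ bound on $u=w/\gamma(v)$ and parabolic smoothing in \eqref{cp2}), we recover $\|u\|_{L^\infty(\Omega\times(0,T))}\le C(T)$.  The main obstacle is carefully handling the sign-indefinite terms $\gamma'(v)$ and $\gamma''(v)$ on the \emph{finite} range of $v$: here one must not try to exploit monotonicity but instead localize, splitting the analysis into the region where $v$ is large (use \eqref{gsup}) and where $v$ is bounded (use continuity of $\gamma,\gamma',\gamma''$ and the a priori lower/upper bounds for $v$), and closing the estimate with a Gronwall argument that only requires the already-established $L^1$ mass bound.

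\medskip

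\textbf{Step 4: Bootstrap to a classical solution and conclusion.}  With $\|u\|_{L^\infty(0,T;L^\infty)}$ in hand for every $T<T_{\max}$, parabolic $L^p$ and Schauder estimates applied to \eqref{cp2} give $v\in L^\infty_{loc}(0,T_{\max};W^{1,\infty})$ and indeed $v\in C^{2+\alpha,1+\alpha/2}$ locally; feeding this back into \eqref{cp1}, which is now uniformly parabolic in $u$ with H\"older coefficients, yields $u\in C^{2+\alpha,1+\alpha/2}$ locally as well.  All three quantities in the continuation criterion are thus finite up to any $T<\infty$, so $T_{\max}=\infty$, proving global existence of the unique classical solution with the stated regularity.  (Continuity up to $t=0$ with the prescribed initial data follows from the local theory and standard parabolic initial-regularity results, using \eqref{ini}.)
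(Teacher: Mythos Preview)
Your overall architecture (local existence, continuation criterion, lower bound on $v$, then an a priori $L^\infty$ bound on $u$ via an auxiliary quantity, then bootstrap) is correct, but Step~3 --- the heart of the argument --- has a genuine gap, and the choice of auxiliary function is the wrong one.

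You set $w:=u\gamma(v)$ and propose to derive a scalar parabolic inequality for it.  If you actually carry out the computation, using $u_t=\Delta w$ and $\tau v_t=\Delta v-v+u$, you find
\[
w_t=\gamma(v)\Delta w+\frac{u\gamma'(v)}{\tau}\bigl(\Delta v-v+u\bigr),
\]
and the right-hand side contains the sign-indefinite lower-order terms $u\gamma'(v)\Delta v$ and $u^2\gamma'(v)$ (equivalently $\gamma'(v)\gamma(v)^{-2}w^2$).  Without monotonicity of $\gamma$ you cannot sign these, and they are genuinely quadratic in $u$ with no small prefactor; the condition $\gamma_\infty<1/\tau$ does not help here because $\gamma'$ can be large on any compact interval.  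You acknowledge this as ``the main obstacle'' but do not resolve it.  Your subsequent bootstrap (``$\gamma(v)$ bounded below because $v$ is bounded above, itself obtained from $u=w/\gamma(v)\in L^1$ and parabolic smoothing'') is also circular in dimension $N\ge 2$: an $L^1$ bound on the source does not give $L^\infty$ control on $v$.

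The paper's approach avoids all of this by taking a \emph{different} auxiliary function, namely $w:=\mathcal{A}^{-1}[u]=(I-\Delta)^{-1}u$.  Applying $\mathcal{A}^{-1}$ to \eqref{cp1} gives the key identity
\[
w_t+u\gamma(v)=\mathcal{A}^{-1}[u\gamma(v)],
\]
which contains \emph{no derivatives of $\gamma$ whatsoever}.  From this and the fact that $\gamma$ is bounded on $[v_*,\infty)$ (a consequence of \eqref{g1} and \eqref{gsup}), elliptic comparison immediately yields the time-dependent bound $w\le w_0 e^{\gamma^* t}$.  The decisive new step (Lemma~\ref{comparisonA}) is then a comparison between $v$ and $w$: writing $\mathcal{L}:=\tau\partial_t-\Delta+I$ and computing $\mathcal{L}[v]=u=\mathcal{L}[w]-\tau w_t$, one inserts the key identity and the mean value theorem for $\gamma$ to obtain a linear parabolic inequality for $h:=(1-\sigma)v-w-C_\infty$ with $1-\sigma=1-\tau\gamma_\infty-\varepsilon_0$; the condition $\gamma_\infty<1/\tau$ is exactly what makes $\sigma\in(0,1)$ and the argument close.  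This yields $v\le Bw\le C(T)$ directly, after which $\gamma(v)$ has positive lower and upper bounds on $\bar\Omega\times[0,T]$ and the route to $\|u\|_{L^\infty}$ is the one you describe in Step~4 (via H\"older regularity of $w$ and $v$, then semigroup estimates).  The missing idea in your proposal is precisely this choice $w=(I-\Delta)^{-1}u$ and the resulting derivative-free key identity.
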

\begin{remark}
Note  the above result now permits sign-changing of $\gamma'$ and thus applies 
for both chemo-attraction and chemo-repulsion.

{ In \cite{LJ21}, global existence of weak solutions was proved where sign-changing of $\gamma'$ is also permitted. However, a crucial condition $\sup\limits_{s\in(0,\infty)}\gamma(s)<1/\tau$ is assumed there which requires uniform smallness of $\gamma$ on $(0,\infty)$. In contrast, a much weakened
	assumption \eqref{gsup} in the current work  is proposed where smallness is merely presumed near infinity and hence $\gamma$ can be 
	arbitrarily large within any finite interval.}
\end{remark}

\medskip

Our second task is to study the dynamic of global classical solutions. More precisely, we are interested in the boundedness of global solutions to problem \eqref{cp}. If $\gamma$ has strictly positive lower and 
upper  
bounds, i.e., 
\begin{equation}\label{gb}\tag{A2}
\text{there are generic constants} \;\gamma_*,\gamma^*>0\;\text{such 
that}\;\gamma_*\leq\gamma(\cdot)\leq\gamma^*\;\text{in}\;(0,\infty),
\end{equation} there are limited evidences indicating that 
the system  fosters bounded solutions. In 
this direction, the first result is the one we mentioned before by Tao and Winkler in \cite{TaoWin17}, proving that when 
$N=2$, problem \eqref{cp} permits a unique globally bounded classical solution on a convex domain
 provided that $\gamma$ satisfies \eqref{gb} and $|\gamma'|$ is also bounded. The other result in higher dimension $N\geq 3$ is provided in \cite[Remark 
1.3]{JL21} for the parabolic-elliptic  simplification of system \eqref{cp} 
($\tau=0$), establishing the uniform boundedness when $\gamma$ is 
non-increasing and has  strictly positive lower and upper bounds. 
%

The second main result of this paper considers  the case when $\gamma$ simply 
satisfies \eqref{g1} and \eqref{gb}. We establish global existence of uniformly 
bounded classical solutions in any dimension. {Here and after, we use the short notation $\|\cdot\|_p$ for the norm $\|\cdot\|_{L^p(\Omega)}$ with $p\in[1,\infty]$.}
\begin{theorem}
	\label{TH1}
Let $N\ge 1$. Suppose that $\gamma$ satisfies assumptions~\eqref{g1} and 
\eqref{gb}, and that the initial condition $(u_0, v_0)$ satisfies \eqref{ini}. 
Then problem~\eqref{cp} has a unique global non-negative classical solution 
that is uniformly-in-time bounded, i.e.,
\begin{equation}
	\sup_{t\ge 0}\left\{ \|u(\cdot,t)\|_\infty + \|v(\cdot,t)\|_{W^{1,\infty}} \right\} < 
	\infty\,. \label{ubd}
\end{equation}
\end{theorem}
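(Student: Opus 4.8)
The plan is to exploit the comparison structure of the system, following and improving the method of \cite{JLZ22,FJ19b,LJ21}. By Theorem~\ref{TH0} we already have a unique global classical solution $(u,v)$ since \eqref{gb} implies \eqref{gsup} (indeed $\gamma_\infty\le\gamma^*$ is harmless once we reexamine the argument — more precisely \eqref{gb} guarantees the a priori smallness structure is replaced by two-sided bounds, which is even more favorable). The first step is to record the basic conserved/dissipated quantities: integrating \eqref{cp1} gives $\|u(\cdot,t)\|_1=\|u_0\|_1=:m$ for all $t\ge0$, and testing \eqref{cp2} yields an $L^1$ bound on $v$ together with, after standard parabolic smoothing, an $L^\infty_{loc}$-in-time bound; the goal is to upgrade the latter to a uniform-in-time bound.

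Next I would introduce the auxiliary function that linearizes the cross-diffusion. Writing $w=u\gamma(v)$, equation \eqref{cp1} becomes $u_t=\Delta w$ with $\nabla w\cdot\nu=0$, and since $\gamma_*\le\gamma\le\gamma^*$ we have $\gamma_* u\le w\le\gamma^* u$ pointwise; this is exactly the place where dropping monotonicity is affordable, because the two-sided bound on $\gamma$ makes $w$ and $u$ comparable without any sign condition on $\gamma'$. The core of the argument is then a comparison/duality estimate: set, as in \cite{LJ21,JLZ22}, an auxiliary elliptic problem $-\Delta z+z=u$ (so $z=v$ when $\tau=0$; for $\tau>0$ one compares $v$ with $z$ via the parabolic regularization and Duhamel). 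One establishes a pointwise or integral differential inequality for a functional of the form $\int_\Omega u\,\phi(v)\,dx$ or $\int_\Omega e^{\alpha v}u\,dx$, chosen so that the bad chemotactic term is absorbed using $\gamma_*>0$ (which controls diffusion from below) and $\gamma^*<\infty$ (which controls the reaction-type contributions from above). Concretely I expect to run a Grönwall argument on $y(t)=\|u(\cdot,t)\|_p$ for $p$ large (or on $\int_\Omega u^p\gamma(v)^{\text{something}}$), showing $y'\le -c\,y + C$ after using the Gagliardo–Nirenberg inequality together with the uniform $L^1$ bound on $u$ and the smoothing $\|v\|_{W^{1,q}}\lesssim \|u\|_{L^r}+\|v_0\|_{W^{1,\infty}}$; the two-sided bound on $\gamma$ is what keeps all constants independent of $t$.

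From a uniform $L^p$ bound on $u$ with $p>N/2$ one gets, by the semigroup estimates for $-\Delta+1$ applied to \eqref{cp2}, a uniform bound on $\|v\|_{W^{1,\infty}}$; feeding this back, the equation for $u$ becomes a uniformly parabolic equation $u_t=\ddiv(\gamma(v)\nabla u+u\gamma'(v)\nabla v)$ with bounded, uniformly elliptic coefficients and a bounded drift, so a Moser-type iteration or the Alikakos–Moser lemma yields $\sup_{t\ge0}\|u(\cdot,t)\|_\infty<\infty$, and then parabolic regularity closes \eqref{ubd}. The main obstacle is the fully parabolic coupling ($\tau>0$): unlike the parabolic–elliptic case one cannot directly substitute $v$ into a pointwise elliptic comparison, so the differential inequality for the test functional picks up an extra term involving $v_t$, and controlling this term uniformly in time — rather than just locally — is the crux. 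I expect to handle it by the refined comparison argument the abstract advertises: either by a clever choice of the weight $\phi$ making the $v_t$-term have a favorable sign, or by coupling the estimate for $\int u\,\phi(v)$ with a dissipation estimate for $\int|\nabla v|^2$ obtained from testing \eqref{cp2} by $v_t$, so that the two inequalities combine into a closed uniform-in-time system. The remaining steps — Gagliardo–Nirenberg bookkeeping, semigroup estimates, Moser iteration — are by now standard and I would not belabor them.
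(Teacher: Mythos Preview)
Your proposal has two genuine gaps.

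First, you invoke Theorem~\ref{TH0} for global existence, but \eqref{gb} does \emph{not} imply \eqref{gsup}: the upper bound $\gamma^*$ in \eqref{gb} may well exceed $1/\tau$. The paper does not go through Theorem~\ref{TH0} here; it works on $[0,T_{\mathrm{max}})$ and obtains global existence a posteriori from the blowup criterion once the uniform $L^\infty$ bound on $u$ is in hand.

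Second, and more seriously, your mechanism for the initial $L^p$ bound on $u$ does not work under \eqref{gb} alone. Testing $u_t=\Delta(u\gamma(v))$ with $u^{p-1}$ produces
\[
\frac{1}{p}\frac{d}{dt}\|u\|_p^p = -(p-1)\int_\Omega u^{p-2}\gamma(v)|\nabla u|^2\,dx -(p-1)\int_\Omega u^{p-1}\gamma'(v)\nabla v\cdot\nabla u\,dx,
\]
and the cross term carries $\gamma'(v)$, which is \emph{not} controlled by \eqref{gb}. This is exactly why Tao--Winkler had to assume $|\gamma'|$ bounded; your sketch re-imports that hypothesis implicitly. Your alternative functionals $\int u\,\phi(v)$ or $\int e^{\alpha v}u$ are left too vague to assess, and the ``auxiliary'' choice $w=u\gamma(v)$ is just a relabeling of the flux that does not help.

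The paper's route avoids $\gamma'$ entirely. The correct auxiliary variable is $w=\mathcal{A}^{-1}[u]=(I-\Delta)^{-1}u$ (you mention the elliptic problem $-\Delta z+z=u$ but do not use it). The key identity $w_t+u\gamma(v)=\mathcal{A}^{-1}[u\gamma(v)]$ involves only $\gamma$, and under \eqref{gb} together with $u=w-\Delta w$ and elliptic comparison it yields the clean linear inequality
\[
w_t-\gamma_*\Delta w+\gamma_* w\le \gamma^* w.
\]
From this one gets uniform-in-time $L^p$ bounds for $w$ by testing with $pw^{p-1}$ and Gagliardo--Nirenberg, then $L^\infty$ for $w$ by comparison with a linear heat equation. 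With $w$ bounded and $\gamma_*\le\gamma(v)\le\gamma^*$, a local energy estimate gives H\"older regularity of $w$, hence of $v$, and only \emph{then} does the paper return to $u$: at that stage $v$ is bounded, so $\gamma'(v)$ is bounded and the drift form $u_t=\nabla\cdot(\gamma(v)\nabla u+u\gamma'(v)\nabla v)$ is legitimately uniformly parabolic, and the Moser iteration you describe closes the argument. The point is that the $w$-equation, not a direct energy on $u$, is what breaks the dependence on $\gamma'$.
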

\begin{remark}
	 Note that we do not need assumption \eqref{gsup} here. Our result improves  the 2D boundedness result in \cite[Theorem 1.1]{TaoWin17} by establishing boundedness in any dimension and removing the  boundedness assumption on $|\gamma'|$, as well as the convexity of domain as requested there.
\end{remark}
\begin{remark}
	In view of the time-independent positive lower bound for $v$ that are also independent of the choice of $\gamma$, see  \eqref{e00} below, assumption \eqref{gb} can be slightly weakened as 
	\begin{equation*}
	\text{there are generic constants} \;\gamma_*,\gamma^*>0\;\text{such 
		that}\;\gamma_*\leq\gamma(\cdot)\leq\gamma^*\;\text{in}\;[v_*,\infty).	
	\end{equation*}
\end{remark}
\medskip

Boundedness results are also available in the literature if $\gamma$ is non-increasing and 
tending 
to  zero at infinity, i.e.,
\begin{equation}\label{gdecay}\tag{A3}
	\gamma'\leq0,\qquad\text{and}\;\lim\limits_{s\rightarrow \infty}\gamma(s)=0.
\end{equation}
Thus the motility function has no strictly positive lower bound and the above asymptotic vanishing property
leads to a  possible degenerate problem. Under the circumstances, dynamic 
of global solutions is shown to be closely related to the decay rate of $\gamma$ at 
infinity. 
Exponential decay rate is verified to be critical  if $N=2$  in a sense that 
the solution can become unbounded at time infinity when 
$\gamma(s)=e^{-s}$ with some large mass initial data \cite{FJ19a,FJ19b,JW20}, 
whereas it 
is always uniformly bounded when 
$\gamma$ decays slower than a negative exponential function, i.e., $\gamma$ 
satisfies
\begin{equation}\label{growth2da}
	\liminf\limits_{s\rightarrow\infty}e^{\chi s}\gamma(s)>0
\end{equation} for all $\chi>0$ \cite{FJ20a, 
JLZ22}, e.g., $\gamma(s)=s^{-k_1}\log^{-k_2}(1+s)$ with any $k_1,k_2>0$, or $\gamma(s)=e^{-s^{\alpha}}$ with any $\alpha\in(0,1)$. If $N\geq3$, the global classical solution is proved to be uniformly-in-time bounded in \cite{JL21,JLZ22, FS22b},  provided that $\gamma$ exhibits an algebraic decay at infinity. More precisely, the following algebraic growth assumption is requested on $1/\gamma$:
\begin{equation}\label{gamma2}
\text{there are $k\geq l\geq0$ such that}\;\;\liminf\limits_{s\rightarrow\infty}s^{k}\gamma(s)>0\;\;\text{and}\;\;
\limsup\limits_{s\rightarrow\infty}s^{l}\gamma(s)<\infty. \tag{A4}
\end{equation}A specific example satisfying \eqref{gamma2} is $\gamma(s)=s^{-k}$. For the parabolic-elliptic version of system \eqref{cp} ($\tau=0$), it is shown in \cite{JL21} that the global solution is bounded if we require further that 
\begin{equation}\label{gamm3}
	k < \frac{N}{N-2} \;\;\text{and }\;\; k-l < \frac{2}{N-2}\,.
\end{equation} See also \cite{Anh19,Wang20} for the specific case $\gamma(s)=s^{-k}$ with $k<\frac{2}{N-2}$. For the fully parabolic case $\tau>0$, however, an additional technical assumption below is also needed in \cite{JLZ22}:
\begin{equation}\label{A5}
	\begin{split}
		&\text{there is}\;  b_0\in (0,1] \;\text{such that, for any $s\ge s_0>0$,} \\
		& \hspace{2cm} s\gamma(s)+(b_0-1) \int_1^s\gamma(\eta)\mathrm{d}\eta\leq K_0(s_0) \,, \\
		&\text{where $K_0(s_0)>0$ depends only on $\gamma$, $b_0$, and $s_0$}\,.
	\end{split}\tag{A5}
\end{equation}

One another aim of the present paper is to revisit the boundedness result in \cite{JLZ22,FJ20a} 
for $N\geq2$. { Thanks to an improvement of the comparison argument, 
we are now 
able to remove the non-increasing 
monotonicity and  the above technical assumption \eqref{A5} on $\gamma$ under the same decay rate assumption as in  \cite{JLZ22,FJ20a}.}

Now we rephrase the boundedness result for problem 
\eqref{cp} with possible decaying motility as follows.
\begin{theorem}\label{TH2a}
	Let $N=2$. Suppose that $\gamma$ satisfies  assumptions~\eqref{g1} and \eqref{gsup}. Moreover, there is $\chi>0$ such that \eqref{growth2da} is satisfied. If the initial condition $(u_0,v_0)$ satisfies \eqref{ini} with 
{	\begin{equation}
		\|u_0\|_1<\frac{4\pi(1-\tau\gamma_\infty)}{\chi},
	\end{equation}
}then the global classical solution $(u,v)$ to \eqref{cp} is uniformly-in-time bounded.

In particular, if \eqref{growth2da} is satisfied for all $\chi>0$, then the global classical solution to \eqref{cp} is uniformly-in-time bounded for any given initial data satisfying \eqref{ini}. 
\end{theorem}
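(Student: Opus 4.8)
The starting point is Theorem~\ref{TH0}: since \eqref{gsup} holds, problem~\eqref{cp} already possesses a unique global classical solution $(u,v)$, so the whole task is to produce time-uniform a priori bounds. I would first argue that it suffices to establish that $u$ stays bounded in $L\log L$ uniformly in time, i.e.\ $\sup_{t\ge0}\io u\ln u\,dx<\infty$, together with the companion fact that $\int_t^{t+1}\io\gamma(v)\,|\na u|^2/u\,dx\,ds$ is bounded uniformly in $t$. Indeed, granted these, one upgrades to $\sup_t\|u(\cdot,t)\|_p<\infty$ for every $p<\infty$ (a two-dimensional Gagliardo--Nirenberg argument on $\sqrt u$, then $L^p$-parabolic regularity for \eqref{cp2}), hence $\sup_t\|v(\cdot,t)\|_{W^{1,\infty}}<\infty$; since $v$ lives in the fixed interval $[v_*,\infty)$ by \eqref{e00}, on which $\gamma\in C^3$ is positive with $\sup_{[v_*,\infty)}\gamma<\infty$ (finite by \eqref{gsup} and continuity), the equation $u_t=\na\cdot(\gamma(v)\na u+u\gamma'(v)\na v)$ becomes uniformly parabolic with bounded coefficients and Moser iteration yields $\sup_t\|u(\cdot,t)\|_\infty<\infty$. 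So the real work is the $L\log L$ bound.

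Differentiating $\io u\ln u\,dx$ and integrating by parts produces the good dissipation $\io\gamma(v)\,|\na u|^2/u\,dx$ but also coupling terms in which $\gamma'(v)$ and $\gamma''(v)$ enter with no definite sign --- exactly the terms for which \cite{FJ19b,FS22,JLZ22} needed $\gamma'\le0$ or \eqref{A5}. The plan is to dispose of them through the comparison method of \cite{JLZ22,FJ19b,LJ21}, in a sharpened form. Using mass conservation $\|u(\cdot,t)\|_1\equiv m:=\|u_0\|_1$, I would introduce the potential $w$ normalised by $\io w\,dx\equiv0$ and satisfying $\pa_t w=\gamma(v)u$, $u=\Delta w+m/|\Omega|$, so that $w$ solves the scalar equation $\pa_t w=\gamma(v)(\Delta w+m/|\Omega|)$ with positive, bounded leading coefficient and hence a comparison principle; comparing $w$ --- and through \eqref{cp2} also $v$ --- against carefully chosen super-solutions should yield a time-uniform control of the growth of $v$, quantitatively of the exponential moment $\io e^{\chi v}\,dx$. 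Feeding in \eqref{growth2da}, which forces $-\ln\gamma(s)\le\chi s+C$ for all $s\ge v_*$ and so bounds the sign-indefinite terms by $\chi\io uv\,dx$ up to lower order, and combining with the $H^1$-in-$x$ bound for $v$ obtained by testing \eqref{cp2} with $v$, I expect to arrive at a differential inequality for a combined functional of the shape $\mathcal{F}(t)=\io u\ln u\,dx+\mathcal{C}(u,v)$, with $\mathcal{C}$ built from $\io u(-\ln\gamma(v))\,dx$ and $\|v\|_{H^1(\Omega)}^2$. The gradient contributions in $\frac{d}{dt}\mathcal{F}$ can be absorbed into the dissipation precisely when the effective weight in front of $\io e^{\chi v}\,dx$ --- which should come out proportional to $\chi m/(1-\tau\gamma_\infty)$, the factor $1-\tau\gamma_\infty$ entering via \eqref{gsup} and the comparison step --- stays below the critical value set by the two-dimensional Moser--Trudinger inequality; after the bookkeeping this is exactly the assumption $\|u_0\|_1<4\pi(1-\tau\gamma_\infty)/\chi$. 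Under it one gets $\sup_{t\ge0}\mathcal{F}(t)<\infty$, hence both the $L\log L$ bound and, integrating over $[t,t+1]$, the required dissipation bound.

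The main obstacle, as the paper itself signals, is the comparison step: extracting from the $w$-reformulation a \emph{time-uniform} and \emph{quantitatively sharp} bound on $v$ (equivalently on $\io e^{\chi v}\,dx$) without any monotonicity of $\gamma$ and without \eqref{A5}. Because $\gamma$ may now oscillate, the comparison functions must exploit only $0<\gamma\le\sup_{[v_*,\infty)}\gamma<\infty$ together with the decay rate \eqref{growth2da}, rather than a sign of $\gamma'$, and squeezing enough out of this to reach the Moser--Trudinger threshold with the sharp constant is the genuinely new ingredient over \cite{FJ20a,JLZ22}. Once that is in place the ``in particular'' assertion is immediate: if \eqref{growth2da} holds for every $\chi>0$ then, given any admissible initial data with $\|u_0\|_1=m$ and since $1-\tau\gamma_\infty>0$ is a fixed positive number, one chooses $\chi>0$ so small that $4\pi(1-\tau\gamma_\infty)/\chi>m$ and applies the first part.
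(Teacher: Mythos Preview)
Your route and the paper's diverge almost immediately, and the divergence matters because your central step --- the comparison argument producing a sharp, time-uniform control on $\int_\Omega e^{\chi v}$ --- is left as an aspiration, whereas in the paper it is the whole content.

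The paper does \emph{not} pass through an $L\log L$ bound for $u$ or a Lyapunov functional $\mathcal{F}$. Instead it works with $w=\mathcal{A}^{-1}[u]=(I-\Delta)^{-1}u$ (not your mean-zero Poisson potential), for which the key identity reads $w_t+u\gamma(v)=\mathcal{A}^{-1}[u\gamma(v)]$. The crucial new lemma (Lemma~\ref{comparisonA}) compares $v$ and $w$ directly: writing $\mathcal{L}[v]=u=\mathcal{L}[w]-\tau w_t$ and inserting the key identity, one obtains a linear parabolic inequality for $h=(1-\sigma)v-w-C_\infty$ with $1-\sigma=1-\tau\gamma_\infty-\varepsilon_0$, to which the comparison principle applies \emph{without any sign condition on $\gamma'$}. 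This yields the sharp relation $v\le\bigl(\tfrac{1}{1-\tau\gamma_\infty}+\varepsilon\bigr)(w+C_\infty)$. Since $w$ solves $-\Delta w+w=u$ with $\|u\|_1=\|u_0\|_1$, the Brezis--Merle-type inequality (Lemma~\ref{lm2e}) gives $\int_\Omega e^{Rw}\le C$ for every $R<4\pi/\|u_0\|_1$; combined with the comparison this bounds $\int_\Omega e^{\chi v}$ precisely when $\|u_0\|_1<4\pi(1-\tau\gamma_\infty)/\chi$. A duality estimate (multiply the key identity by $u$) then controls $\int_t^{t+1}\int_\Omega u^2\gamma(v)$, and two-dimensional Sobolev embedding plus a short trick with $w_t\le\gamma^* w$ yields $\sup_t\|w\|_\infty<\infty$, hence $\sup_t\|v\|_\infty<\infty$, after which one simply invokes Theorem~\ref{TH1}.

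Your plan has two concrete gaps. First, your auxiliary function with $\partial_t w=\gamma(v)u$ and $u=\Delta w+m/|\Omega|$ is inconsistent (the correct relation carries the spatial average of $u\gamma(v)$), and more importantly it lacks the nonlocal term $\mathcal{A}^{-1}[u\gamma(v)]$ that makes the paper's comparison Lemma~\ref{comparisonA} work; you offer no mechanism by which the factor $1-\tau\gamma_\infty$ would emerge from your equation. Second, the entropy functional you sketch introduces terms like $\int_\Omega u\,(\gamma'(v))^2\gamma(v)^{-1}|\nabla v|^2$ and $\int_\Omega u\,\gamma'(v)\gamma(v)^{-1}v_t$ which, with no sign or size assumption on $\gamma'$, are not obviously absorbable by the dissipation; ``I expect to arrive at'' is exactly where the difficulty lies. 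The paper sidesteps all of this by never differentiating $\gamma$ and never using entropy: the comparison Lemma~\ref{comparisonA} and the duality estimate do all the work.
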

{
	\begin{remark}
	If $\lim\limits_{s\rightarrow\infty}\gamma(s)=0$, then $\gamma_\infty=0$ and  our result ensures boundedness of classical solutions if $\|u_0\|_1<4\pi/\chi$, or \eqref{growth2da} is satisfied for all $\chi>0$, which improves previous results in \cite{FJ20a,JLZ22} by removing the monotonicity assumption. In this case, the decay rate  \eqref{growth2da} is optimal since infinite-time blowup occurs if $\gamma(s)=e^{-\chi s}$ with some initial data of total mass exceeding $4\pi/\chi$, see \cite{FJ19a,FJ19b,JW20}.
\end{remark}
}
\begin{theorem}\label{TH2}
	 Let $N\ge 3$. Suppose that $\gamma$ satisfies assumptions~\eqref{g1} and  \eqref{gsup}. Moreover, assumption \eqref{gamma2} is fulfilled with some $k\geq l\geq0$ satisfying \eqref{gamm3}.

Then, for any initial condition $(u_0,v_0)$ satisfying  \eqref{ini}, the  global 
classical solution $(u,v)$  to \eqref{cp} is uniformly-in-time bounded in the 
sense that it satisfies \eqref{ubd}.
\end{theorem}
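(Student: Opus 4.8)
\medskip
\noindent\textbf{Proof strategy for Theorem~\ref{TH2}.}
The plan is to use the improved comparison argument to produce a uniform-in-time upper bound for $v$, which removes the degeneracy of \eqref{cp1}, and then to reduce to the non-degenerate situation already treated in Theorem~\ref{TH1}. Since \eqref{g1} and \eqref{gsup} hold, Theorem~\ref{TH0} provides a unique global classical solution $(u,v)$; integrating \eqref{cp1} over $\Omega$ gives the mass identity $\|u(\cdot,t)\|_1=\|u_0\|_1=:m$, testing \eqref{cp2} with $1$ yields $\frac{d}{dt}\io v=\tau^{-1}(m-\io v)$ and hence $\io v\le\max\{\io v_0,m\}$, and the standard $\gamma$-independent lower-bound estimate for \eqref{cp2} gives a time-independent bound $v(x,t)\ge v_*>0$. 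From \eqref{gamma2} we record the two-sided control $c_0(1+s)^{-k}\le\gamma(s)\le C(1+s)^{-l}$ for $s\ge v_*$, with $k\ge l\ge0$: in particular $\gamma(v)\le\gamma^*<\infty$ uniformly, so only the \emph{lower} degeneracy of the diffusivity (as $v\to\infty$) has to be fought, while $\gamma(v(x,t))\ge c_0(1+v(x,t))^{-k}$ quantifies how slowly it is allowed to vanish.

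The core of the argument is the improved comparison method. One introduces an auxiliary function assembled from $v$ and the time-integrated cellular flux in \eqref{cp1}, and exploits the sign of $u$ together with the dissipative structure of \eqref{cp2} to bound $v$ from above by a solution of a decoupled, better-behaved problem. The key improvement over \cite{JLZ22} is that this is achieved for $\tau>0$ without assuming $\gamma$ non-increasing and without the structural condition \eqref{A5}. I expect the outcome to be a uniform-in-time estimate $\sup_{t>0}\|v(\cdot,t)\|_{L^{q_0}}<\infty$ for some exponent $q_0=q_0(N,k,l)\in(1,\infty]$ which, under \eqref{gamm3}, is large enough for the subsequent bootstrap to close. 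Locating the correct auxiliary quantity in the fully parabolic case, and tracking how its integrability depends on $k,l,N$, is the main obstacle.

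Granting such a bound, I would run weighted $L^p$-energy estimates on \eqref{cp1}: testing with a suitable power of $u$ (equivalently, of $u\gamma(v)$), integrating by parts, and using \eqref{cp2} to rewrite the drift term via $\Delta v=\tau v_t+v-u$, one obtains a differential inequality whose dissipation is bounded below --- through $\gamma(v)\ge c_0(1+v)^{-k}$ --- by a multiple of $\io(1+v)^{-k}|\nabla u^{p/2}|^2$, while the remaining terms, controlled by the decay bound $\gamma(v)\le C(1+v)^{-l}$, parabolic regularity for \eqref{cp2}, the comparison bound on $v$, and weighted Gagliardo--Nirenberg and Sobolev inequalities, can be absorbed into the dissipation up to an additive constant precisely when \eqref{gamm3}, namely $k<\frac{N}{N-2}$ and $k-l<\frac{2}{N-2}$, holds. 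This yields $\sup_{t>0}\|u(\cdot,t)\|_p<\infty$ for all $p$ below a threshold exceeding $N/2$; matching the Gagliardo--Nirenberg bookkeeping to the sharp range \eqref{gamm3} is delicate but is essentially an optimization once $q_0$ is in hand.

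Finally, once $\sup_t\|u(\cdot,t)\|_p<\infty$ for some $p>N/2$, maximal $L^p$-regularity for \eqref{cp2} gives $\sup_t\|v(\cdot,t)\|_\infty=:M<\infty$. Then $\gamma$ has strictly positive lower and upper bounds on the compact interval $[v_*,M]$, so the problem is uniformly parabolic; extending $\gamma$ to some $\tilde\gamma\in C^3((0,\infty))$ with strictly positive bounds everywhere and $\tilde\gamma=\gamma$ on $[v_*,M]$ leaves $(u,v)$ unchanged (by uniqueness, since $v$ never leaves $[v_*,M]$), whence Theorem~\ref{TH1} applied to the $\tilde\gamma$-problem yields \eqref{ubd}. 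Equivalently, with $v\in L^\infty$ in hand one concludes directly by a Moser iteration on \eqref{cp1} followed by $W^{1,\infty}$-estimates for \eqref{cp2}.
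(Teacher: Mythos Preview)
Your final reduction step --- once $v\in L^\infty$ uniformly in time, $\gamma$ is trapped in a compact interval of $(0,\infty)$ and Theorem~\ref{TH1} closes the argument --- is exactly what the paper does. The difficulty is everything before that, and here your outline has a real gap.

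The paper does \emph{not} run weighted $L^p$-energy estimates on $u$ via \eqref{cp1}. Testing \eqref{cp1} with $u^{p-1}$ produces, after one integration by parts, a drift term $(p-1)\int_\Omega u^{p-1}\gamma'(v)\,\nabla u\cdot\nabla v$; since the whole point of the theorem is that $\gamma'$ carries no sign and no a~priori bound, absorbing this into the dissipation $\int_\Omega \gamma(v)u^{p-2}|\nabla u|^2$ is precisely what one cannot do without monotonicity, and rewriting $\Delta v=\tau v_t+v-u$ does not touch $\gamma'(v)\nabla v$. The paper sidesteps this entirely by working with the auxiliary function $w=\mathcal{A}^{-1}[u]$, $\mathcal{A}=I-\Delta$, which satisfies the first-order identity $w_t+u\gamma(v)=\mathcal{A}^{-1}[u\gamma(v)]$ containing no derivative of $\gamma$. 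The ``comparison'' you allude to is then made concrete in two steps: (i) an upper control $v\le Bw$ obtained by comparing $\mathcal{L}[v]$ and $\mathcal{L}[w]$ through the key identity (this is Lemma~\ref{comparisonA}, and is where the non-monotonic improvement over \cite{JLZ22} lives), and (ii) a \emph{reverse} control $Aw\le v$ (Lemma~\ref{revlem0}), which uses the upper decay $\gamma(s)\le C s^{-l}$ to show $u\gamma(v)\le C\,\mathcal{L}[\Gamma(v)+C']$ and hence $w\le C(v+1)$. With the two-sided control $Aw\le v\le Bw$ in hand, the algebraic bounds $C_1s^{-k}\le\gamma(s)\le C_2s^{-l}$ transfer to bounds in $w$ alone, yielding the scalar inequality $w_t+C_1B^{-k}w^{-k}u\le C_2A^{-l}(\Gamma(w)+C)$ (Lemma~\ref{lemineq}). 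Multiplying by $w^{p-1}$ and using $u=w-\Delta w$ gives an energy inequality for $\|w\|_p^p$ (Lemma~\ref{lmp}) identical to the one in \cite{JL21,JLZ22}, so the Moser--Alikakos iteration there applies verbatim under \eqref{gamm3} and produces $\sup_t\|w\|_\infty<\infty$, hence $\sup_t\|v\|_\infty<\infty$ via $v\le Bw$.

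In short: your vague ``auxiliary function assembled from $v$ and the time-integrated cellular flux'' needs to be the specific object $w=(I-\Delta)^{-1}u$, the iteration must be performed on $w$ rather than $u$, and the step you label ``the main obstacle'' (locating the auxiliary quantity and its integrability) is resolved not by an $L^{q_0}$ bound on $v$ feeding into $u$-estimates, but by the two-sided pointwise comparison $Aw\le v\le Bw$ feeding into $w$-estimates.
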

\begin{remark}
In Theorem \ref{TH2}, assumption \eqref{gsup} 	is only explicitly needed in the case $l=0$ and $0<k<\frac{2}{N-2}$; see the proof of Lemma \ref{lemineq}.
\end{remark}
{ 
\begin{remark}If  $\gamma(s)=s^{-k}$, we note that \eqref{cp1}-\eqref{cp2} can be written as 
	\begin{equation}\label{log}
		\begin{cases}
				u_t=\nabla\cdot\left(\gamma(v)\nabla u+u\gamma'(v)\nabla v\right)=\nabla\cdot \bigg(\gamma(v)(\nabla u-ku\nabla \log v)\bigg)\\
				\tau v_t-\Delta v+v=u.
		\end{cases}
	\end{equation}
System \eqref{log} resembles  the logarithmic Keller--Segel system and they share the same steady states. For the latter system, it was conjectured in \cite{FS18} that  $k_c=\frac{N}{N-2}$ is the threshold number that  separates boundedness and
blowup of classical solutions. 

According to Theorem \ref{TH2}, we achieve boundedness of global classical solutions to \eqref{log} in the sub-critical case $k<k_c$, whereas for the logarithmic Keller--Segel system, boundedness results in the sub-critical region are still far from satisfaction.
\end{remark}   }

\medskip
The main idea of our proof relies on  a significant improvement of the comparison argument 
of a systematic approach developed recently in 
\cite{FJ19a,FJ19b,LJ21,JL21,JLZ22,FS22b,FS22} for  systems \eqref{cp} and 
\eqref{cpn}. 
Since the approach is quite different from the conventional energy method {(e.g., \cite{TaoWin17,JW20,BLT20,Anh19})}, let 
us outline its procedure here. The key ingredient lies in an 
introduction 
of an auxiliary function $w(x,t)$, which satisfies an elliptic problem:
\begin{equation*}
	\begin{cases}
		w-\Delta w=u \,, \qquad  &(x,t)\in \Omega\times (0,\infty)\,, \\
		\nabla w\cdot\nu=0\,, \qquad &(x,t)\in \partial\Omega\times (0,\infty)\,.
	\end{cases}
\end{equation*}
 By taking an inverse operator $\A=(I-\Delta)^{-1}$ on both sides of  \eqref{cp1} with $\Delta$ being the Laplacian operator with homogeneous Neumann boundary condition, one obtains the following key identity:
\begin{equation}\label{keyid1}
	w_t+u\gamma(v)=\A^{-1}[u\gamma(v)].
\end{equation}
This identity is uncovered in \cite{FJ19b} and has since then been used efficiently to investigate the
global existence and boundedness of classical solutions to \eqref{cp} and its variants \eqref{cpn}, see \cite{FJ19a,FJ19b,FJ20a,JL21,JLZ22,LJ21,DLTW22,LyuWang1,LyuWang2}.

Substituting $u=w-\Delta w$ into the key identity, we transform the problem to a 
thorough study of the 
following quasilinear parabolic equation involving a non-local term:
\begin{equation}\label{keyid2}
	w_t-\gamma(v)\Delta w+\gamma(v)w=\A^{-1}[u\gamma(v)].
\end{equation}
{The proof for global existence and boundedness mainly consists of four steps as we sketch below.
\begin{itemize}
\item  Step I: estimates of $w$ and $v$ in $L^\infty(0,T;L^\infty(\Omega))$ for arbitrary $T>0$, with a possible dependence on $T$.  For this purpose, a three-step comparison argument is developed. 
\begin{enumerate}[(a)]
	\item Firstly, the above key identity \eqref{keyid1} allows us to derive a 
	(time-dependent) upper bound of $w$ via comparison principle of elliptic 
	equations; see Lemma \ref{keylem} below;
	\item  Secondly, an application of comparison principle 
	for heat equations yields to an upper control of $v$ by $w$ and thus gives 
	a 	(time-dependent) upper bound of $v$;
	\item Thirdly, we use comparison principle of 
	elliptic as well as heat equations again to prove a reverse control. Hence 
	we 
	establish a two-sided control of $v$ by $w$.
\end{enumerate} 
\item Step II: time-independent upper bounds for $w$ and $v$. 
\begin{enumerate}[(a)]
	\item If $N\leq2$, the uniform-in-time upper bounds are derived via a simple way relying on a crucial time-independent duality estimate and Sobolev embeddings.
	\item  When $N\geq3$, the proof is more tricky. Thanks to the two-sided control and the monotonicity of $\gamma$, 
	we can replace 
	$\gamma(v)$ by $\gamma(Cw)$ in \eqref{keyid2} with some generic positive constant 
	$C$. Then a delicate Moser-Alikakos 
	iteration is applied to derive the uniform-in-time upper bounds for $w$ and $v$. 
\end{enumerate}

\item Step III:  H\"older estimate for $v$ in $C^\alpha(\bar{\Omega}\times[0,T])$ with arbitrary $T>0$, with a possible dependence upon $T$ of both the estimate and the exponent $\alpha\in(0,1)$. We establish a local energy estimate which implies that $w$  is bounded in $C^\alpha(\bar{\Omega}\times[0,T])$ by the classical result in \cite{LSU}. The H\"older estimate for $v$ then follows by standard Schauder's theory.

\item Step IV:  estimates of $(u,v)$ in $L^\infty(0,T; L^\infty(\Omega)\times W^{1,\infty}(\Omega))$, still possibly depending on $T$. 
The above H\"older continuity enables us to  regard $-\gamma(v(t))\Delta$ as a generator of an evolution operator. Thus we can apply the abstract semigroup theory in \cite{Amann} to deduce the high-order estimates for $w$. Finally, we use energy method to get the  
$W^{1,\infty}$-boundedness 
of $v$, and $L^\infty$-boundedness of $u$, respectively.
\end{itemize}
}
{ For global existence of classical solutions, the proof goes along Step I-(a, b), Step III and Step IV. For uniform-in-time boundedness, the proof consists of  Step I-(b), Step II-(a), Step III and Step IV when $N\leq2$, and it follows Step I-(b, c), Step II-(b), Step III and Step IV when $N\geq3$. 
}

{
	We point out that  monotonicity of $\gamma$ is not necessary in 
	Step III and Step IV once we obtain the (time-dependent or time-independent) upper bounds of $w$ 
	and $v$. However, the non-increasing property plays a crucial role in comparison argument part Step 
	I-(b,c), which restrict the previous studies \cite{FJ19a,FJ19b,JL21,JLZ22,FS22,FS22b} mainly focusing on the chemo-attraction case.
}

{We stress again that the main contribution of the present work is to completely remove the monotonicity assumption on the motility function, which achieves a crucial technical improvement of the above mentioned method and an important progress in the existence/boundedness theory for the Keller--Segel system \eqref{cp}. Moreover, it has significance in physical applications, since now $\gamma'$ is permitted to change its sign, both chemo-attraction and chemo-repulsion are included.
	
		As we analyzed above, the cornerstone of our proofs is  the derivation of $L^\infty$-bounds for $w$ and $v$  without monotonicity, which relies on an essential
improvement and a careful revisit of 
the above mentioned steps.} Firstly, we develop a variant version of the 
second comparison argument in Step I-(b) which completely  removes the non-increasing assumption; see Lemma \ref{comparisonA} below. Secondly,  under the assumption of strictly lower and 
upper boundedness of $\gamma$, we can directly apply an iteration argument on 
\eqref{keyid1} together with comparison techniques to derive the uniform boundedness of $w$ without the 
monotonicity 
assumption on $\gamma$, which allows us to skip Step II. Lastly, we observe that 
under the assumptions \eqref{gsup} and \eqref{gamma2} with  some $k\geq l\geq 0$,
monotonicity in Step I-(c) and Step II can also be dropped as well.

The paper is organized as follows. In Section 2, we recall some preliminary 
results. In Section 3, we provide the key variant proof for the upper control of $v$ 
by $w$, which removes the monotonic non-increasing property
 and enables us to prove Theorem \ref{TH0}. In Section 4, we use an iteration together 
with a comparison argument to derive the uniform upper bound for $w$ directly and then 
prove Theorem \ref{TH1}. In the last section, we discuss the boundedness when $\gamma$ satisfies some decay condition at infinity.

\section{Preliminaries}
In this section, we recall some basic results for problem \eqref{cp}. First, we 
have the  local well-posedness. We refer the reader to \cite[Proposition 
2.1]{JLZ22} for a detailed proof.
 \begin{theorem}\label{local}
	 Suppose that $\gamma$ satisfies \eqref{g1} and $(u_0,v_0)$ satisfies 
	 \eqref{ini}. Then there exists $T_{\mathrm{max}} \in (0, \infty]$ such 
	 that 
	 problem~\eqref{cp} has a unique non-negative classical solution $(u,v)\in 
	 C(\bar{\Omega}\times [0,T_{\mathrm{max}});\mathbb{R}^2)\cap 
	 C^{2,1}(\bar{\Omega}\times (0,T_{\mathrm{max}});\mathbb{R}^2)$. The 
	 solution $(u,v)$ satisfies the mass conservation
	\begin{equation}
	\int_\Omega u(x,t)\ \ d x= \int_\Omega u_0(x)\ d x
	\quad \text{for\ all}\ t \in (0,T_{\mathrm{max}})\,.\label{e0}
	\end{equation}
Moreover, there is $v_*>0$ depending only on $\Omega$, $v_0$, and $\|u_0\|_1$ such that
\begin{equation}
	v(x,t) \ge v_*\,, \qquad (x,t)\in	\bar{\Omega}\times  
	[0,T_{\mathrm{max}})\,. \label{e00}
\end{equation}
Finally, if $T_{\mathrm{max}}<\infty$, then
	\begin{equation*}
	\limsup\limits_{t\nearrow T_{\mathrm{max}}}\|u(\cdot,t)\|_{\infty}=\infty.
	\end{equation*}
\end{theorem}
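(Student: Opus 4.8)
The plan is to treat \eqref{cp} as a triangular quasilinear parabolic system in normal form and invoke the abstract theory of Amann \cite{Amann} (this is the route of \cite[Proposition~2.1]{JLZ22}). Writing \eqref{cp1} in divergence form, $u_t=\nabla\cdot\big(\gamma(v)\nabla u+u\gamma'(v)\nabla v\big)$, the principal part of the system in the unknown $(u,v)$ is governed by
\[
\mathcal{A}(u,v)=\begin{pmatrix}\gamma(v) & u\gamma'(v)\\[2pt] 0 & 1/\tau\end{pmatrix},
\]
which is upper triangular with eigenvalues $\gamma(v)$ and $1/\tau$. Since $v_0>0$ on the compact set $\bar\Omega$ and $\gamma>0$ on $(0,\infty)$ by \eqref{g1}, we have $\gamma(v_0)>0$, so the system is normally elliptic near the initial datum, and the boundary conditions \eqref{cp3} are of conormal (Neumann) type. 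As $\gamma\in C^3$, Amann's theory then produces a unique maximal classical solution $(u,v)$ on $[0,T_{\mathrm{max}})$ with the regularity asserted, together with an extensibility criterion: if $T_{\mathrm{max}}<\infty$, then $\|u(\cdot,t)\|_\infty+\|v(\cdot,t)\|_{W^{1,\infty}}$ cannot remain bounded as $t\nearrow T_{\mathrm{max}}$. (Alternatively one can run a Banach fixed-point argument directly: given $\hat v$ close to $v_0$ in $C([0,T];W^{1,\infty}(\Omega))$, solve the linear uniformly parabolic equation for $u$, then the linear equation \eqref{cp2} for $v$, and show the resulting map is a contraction for $T$ small.)

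Next I would record the elementary invariances. Mass conservation \eqref{e0} follows by integrating \eqref{cp1} over $\Omega$ and using \eqref{cp3}. For non-negativity, note that as long as $v>0$ the function $z:=u\gamma(v)$ satisfies the \emph{linear} equation $z_t=\gamma(v)\Delta z+\tfrac{\gamma'(v)}{\gamma(v)}v_t\,z$ with homogeneous Neumann boundary condition and $z(\cdot,0)=u_0\gamma(v_0)\ge0$; equivalently, $u$ itself solves a linear uniformly parabolic (divergence-form) equation once $v$ is regarded as given. The maximum principle then yields $z\ge0$, hence $u=z/\gamma(v)\ge0$; feeding this into \eqref{cp2} and applying the maximum principle with $v_0>0$ gives $v>0$, which closes the loop and shows the solution never leaves the domain where $\gamma$ is defined.

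For the time-independent lower bound \eqref{e00} I would use the Duhamel representation for \eqref{cp2},
\[
v(\cdot,t)=e^{-\frac{t}{\tau}(1-\Delta)}v_0+\frac1\tau\int_0^t e^{-\frac{t-s}{\tau}(1-\Delta)}u(\cdot,s)\,\mathrm{d}s,
\]
with $e^{\sigma\Delta}$ the Neumann heat semigroup, and split into the two nonnegative contributions. The first term is bounded below by $e^{-t/\tau}\min_{\bar\Omega}v_0$, which handles any fixed finite interval, say $t\le 1$. For $t\ge 1$ I would discard the first term and use the positivity-improving property of the Neumann heat semigroup, $e^{\sigma\Delta}\phi\ge c(\sigma)\int_\Omega\phi$ pointwise for $\phi\ge0$ with $c(\sigma)>0$; combined with mass conservation this gives $v(\cdot,t)\ge\frac{\|u_0\|_1}{\tau}\int_0^1 e^{-\sigma/\tau}c(\sigma)\,\mathrm{d}\sigma>0$. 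Taking $v_*$ to be the minimum of the two bounds yields \eqref{e00} with a constant depending only on $\Omega$, $\tau$, $v_0$ and $\|u_0\|_1$.

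Finally, I would upgrade the abstract extensibility criterion to the stated one involving $\|u\|_\infty$ alone by a bootstrap. Assume $T_{\mathrm{max}}<\infty$ and $\sup_{t<T_{\mathrm{max}}}\|u(\cdot,t)\|_\infty<\infty$. Linear parabolic regularity applied to \eqref{cp2} then bounds $v$ in, say, $C^{1+\alpha,(1+\alpha)/2}(\bar\Omega\times[0,T_{\mathrm{max}}])$; hence $\gamma(v),\gamma'(v)$ are bounded and Hölder, and $\gamma(v)$ is bounded below by \eqref{e00}, so \eqref{cp1} is a uniformly parabolic linear equation with Hölder coefficients, and Schauder theory bounds $u$ up to $T_{\mathrm{max}}$; restarting the local existence result from $T_{\mathrm{max}}$ then contradicts maximality. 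Therefore $\limsup_{t\nearrow T_{\mathrm{max}}}\|u(\cdot,t)\|_\infty=\infty$. The main technical points in this program are the bootstrap regularity near $T_{\mathrm{max}}$ — in particular securing enough Hölder regularity of $v$ to make the $u$-equation uniformly parabolic with admissible coefficients — and the positivity-improving estimate behind the uniform-in-time bound \eqref{e00}; the abstract local existence step itself is routine once normal ellipticity is observed.
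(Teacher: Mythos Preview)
Your proposal is correct and follows exactly the route the paper indicates: the paper does not give its own proof but simply refers to \cite[Proposition~2.1]{JLZ22}, and your sketch (Amann's quasilinear theory for normally elliptic triangular systems, mass conservation and nonnegativity by the maximum principle, the Duhamel-based pointwise lower bound for $v$, and the bootstrap upgrade of the extensibility criterion) is precisely that argument. There is nothing to add.
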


The following result implies that under the assumptions \eqref{g1} and 
\eqref{gamma2}, $\gamma$ is controlled from the above and below by two 
algebraically decay functions. The proof is trivial and we omit the details here.
\begin{lemma}\label{gamma2b}
	Suppose that $\gamma$ satisfies \eqref{g1} and \eqref{gamma2}. Then for any $a>0$, there are constants $C_1,C_2>0$ depending on $a$ and $\gamma$ such that 
	\begin{equation}\label{gamma2c}
		C_1s^{-k}\leq \gamma(s)\leq C_2s^{-l},\qquad s\in[a,\infty).
	\end{equation}
\end{lemma}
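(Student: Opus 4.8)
The plan is to split the ray $[a,\infty)$ into a compact piece $[a,s_0]$, where continuity and strict positivity of $\gamma$ do the work, and an unbounded tail $[s_0,\infty)$, where the desired two‑sided algebraic bound is exactly what the $\liminf$/$\limsup$ conditions in \eqref{gamma2} already guarantee.

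First I would extract the tail estimates. Since $\liminf_{s\to\infty}s^{k}\gamma(s)>0$, there are $m>0$ and $s_1>0$ with $s^{k}\gamma(s)\ge m$, i.e. $\gamma(s)\ge m\,s^{-k}$, for all $s\ge s_1$. Symmetrically, from $\limsup_{s\to\infty}s^{l}\gamma(s)<\infty$ there are $M>0$ and $s_2>0$ with $\gamma(s)\le M\,s^{-l}$ for all $s\ge s_2$. Put $s_0:=\max\{s_1,s_2,a\}$, so that both bounds hold on the whole tail $[s_0,\infty)$.

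Next I would treat the compact interval $[a,s_0]$. By \eqref{g1}, $\gamma$ is continuous and strictly positive there, hence attains a minimum $\mu>0$ and a maximum $\Lambda<\infty$. Since $k,l\ge0$ and $a\le s\le s_0$ on this interval, one has $s^{-k}\le a^{-k}$ and $s^{-l}\ge s_0^{-l}$, so that $\gamma(s)\ge\mu\ge(\mu a^{k})\,s^{-k}$ and $\gamma(s)\le\Lambda\le(\Lambda s_0^{l})\,s^{-l}$ for $s\in[a,s_0]$. Finally I would combine the two regimes by taking $C_1:=\min\{m,\mu a^{k}\}>0$ and $C_2:=\max\{M,\Lambda s_0^{l}\}>0$; then $C_1s^{-k}\le\gamma(s)\le C_2s^{-l}$ holds on all of $[a,\infty)$, which is \eqref{gamma2c}.

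There is essentially no obstacle here — the argument is entirely elementary, which is why the statement is called trivial. The only point one must be careful about is that the constants furnished by \eqref{gamma2} are valid only for sufficiently large $s$; the compactness step on $[a,s_0]$ is precisely what transfers the bounds down to the fixed threshold $a$, at the cost of letting $C_1,C_2$ depend on $a$ (and on $\gamma$) as stated.
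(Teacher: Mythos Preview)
Your proof is correct and is precisely the elementary argument the paper has in mind; the paper itself simply declares the result trivial and omits the details. There is nothing to add or correct.
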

%

Next, we define the operator $\A$ on $L^2(\Omega)$ as
\begin{equation*}
	\mathrm{dom}(\A)\triangleq \{z\in H^2(\Omega):\qquad \nabla z\cdot \nu=0\;\;\text{on}\;\partial\Omega\},\qquad \A z\triangleq z-\Delta z,\quad z\in \mathrm{dom}(\A).
\end{equation*}
We recall that $\A$ generates an analytic semigroup on $L^p(\Omega)$ and is invertible on $L^p(\Omega)$ for all $p\in(1,\infty)$, {see e.g., \cite{Amann}}. Then we introduce $w(\cdot,t)\triangleq\A^{-1}[u(\cdot,t)]\geq0$, for $t\in[0,T_{\mathrm{max}})$, the non-negativity of $w$ being a consequence of that of $u$ and the comparison principle. Due to the time continuity of $u$, 
\begin{equation}
	w_0(x)\triangleq w(x,0)= \A^{-1}[u_0(x)]
\end{equation} and it follows the regularity assumption \eqref{ini} that $w_0\in W^{2,p}(\Omega)$ with any $1\leq p<\infty$. 


We also need the following result given in \cite[Lemma~3.3]{Wang20}, which is similar to the celebrated Brezis-Merle inequality \cite[Theorem~1]{BrMe1991}, see  \cite[Lemma~A.3]{TaoWin17} for related results. 

\begin{lemma}\label{lm2e}
	Assume that $N=2$. For any  $f\in L^1(\Omega)$ such that
	\begin{equation*}
		\|f\|_1= \Lambda>0
	\end{equation*}
	and $0<R<\frac{4\pi}{\Lambda}$, there is $C(\Lambda,R)>0$ depending on $\Omega$, $\Lambda$, and $R$ such that the solution $z$ to 
	\begin{equation}\label{helm}
		\begin{cases}
			-\Delta z+ z=f,\qquad x\in\Omega\,,\\
			\nabla z\cdot \nu=0\,,\qquad x\in\partial\Omega\,,
		\end{cases}
	\end{equation}
	 satisfies
	\begin{equation*}
		\int_\Omega e^{Rz} d x\leq C(\Lambda,R)\,.
	\end{equation*}
\end{lemma}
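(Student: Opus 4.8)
The final statement is Lemma~\ref{lm2e}, a Brezis--Merle-type inequality for the Helmholtz equation $-\Delta z + z = f$ on a bounded planar domain with Neumann boundary conditions. Let me sketch how I'd prove it.

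The plan is to reduce the Neumann Helmholtz problem to the classical Brezis--Merle estimate for $-\Delta$ on $\mathbb{R}^2$ (or on $\Omega$ with Dirichlet data) via a Green's function representation. First I would write $z(x) = \int_\Omega G(x,y) f(y)\,dy$, where $G$ is the Green's function for $-\Delta + 1$ with homogeneous Neumann boundary conditions on $\Omega$. The key structural fact is that $G$ has the same leading logarithmic singularity as the fundamental solution of $-\Delta$ in two dimensions, namely $G(x,y) = \frac{1}{2\pi}\log\frac{1}{|x-y|} + H(x,y)$ with $H$ bounded on $\bar\Omega\times\bar\Omega$ (the boundedness of $H$ uses smoothness of $\partial\Omega$; the zeroth-order term $+1$ only improves decay and does not worsen the singularity). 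Since we must allow $f$ merely in $L^1$, I'd first establish the estimate for $f \in L^\infty \cap L^1$ (so that $z$ is continuous and the pointwise representation is rigorous) and then pass to the limit by approximation, using Fatou's lemma together with the fact that the constant $C(\Lambda,R)$ depends only on $\Lambda = \|f\|_1$ and $R$, not on higher norms.

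The main step is the exponential integrability estimate itself. Normalizing $\mu = |f|$ as a measure of total mass $\Lambda$, I would use Jensen's inequality in the form: for $R < 4\pi/\Lambda$, write $R z(x) \le \frac{R}{2\pi}\int_\Omega \log\frac{1}{|x-y|}\,d\mu(y) + R\|H\|_\infty \Lambda$, then apply Jensen to the probability measure $d\mu/\Lambda$ to get
\begin{equation*}
e^{R z(x)} \le e^{R\|H\|_\infty \Lambda}\int_\Omega |x-y|^{-R\Lambda/(2\pi)}\,\frac{d\mu(y)}{\Lambda}.
\end{equation*}
Integrating over $x\in\Omega$ and using Fubini,
\begin{equation*}
\int_\Omega e^{Rz}\,dx \le \frac{e^{R\|H\|_\infty\Lambda}}{\Lambda}\int_\Omega\left(\int_\Omega |x-y|^{-R\Lambda/(2\pi)}\,dx\right)d\mu(y),
\end{equation*}
and the inner integral is bounded uniformly in $y\in\bar\Omega$ precisely because the exponent $R\Lambda/(2\pi) < 2 = N$, which is where the hypothesis $R < 4\pi/\Lambda$ enters. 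This yields the bound $C(\Lambda,R)$ depending only on $\Omega$, $\Lambda$, $R$.

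I expect the main obstacle to be bookkeeping around the low regularity of $f$ and the precise form of the Neumann Green's function: one must justify that $G$ is nonnegative (for the comparison-principle-style control $z\ge 0$ when $f\ge 0$, though here $f$ is signed so $z=\int G f$ is the relevant representation) and, more importantly, that the regular part $H$ is genuinely bounded on $\bar\Omega\times\bar\Omega$ — this is standard elliptic regularity for smooth domains but should be cited (e.g.\ to \cite{LSU} or a potential-theory reference) rather than reproved. An alternative that sidesteps constructing $G$ explicitly is to invoke the cited result \cite[Lemma~3.3]{Wang20} or \cite[Lemma~A.3]{TaoWin17} directly; since the statement is quoted verbatim from \cite{Wang20}, in the paper itself it suffices to attribute it, and the sketch above is the underlying argument.
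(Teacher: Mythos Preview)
The paper does not prove Lemma~\ref{lm2e} at all: it is stated as a quotation of \cite[Lemma~3.3]{Wang20} (with reference also to \cite{BrMe1991} and \cite[Lemma~A.3]{TaoWin17}) and no argument is given. You correctly anticipate this in your final paragraph, and your sketch of the underlying proof --- the Neumann Green's function decomposition $G(x,y)=\frac{1}{2\pi}\log\frac{1}{|x-y|}+H(x,y)$ with bounded $H$, followed by Jensen's inequality against the probability measure $|f|\,dy/\Lambda$ and Fubini, exploiting $R\Lambda/(2\pi)<2$ --- is exactly the standard Brezis--Merle argument and is correct. One small point worth tightening if you ever write this out in full: since $f$ may change sign, the passage from $z=\int Gf$ to the pointwise upper bound $z(x)\le \frac{1}{2\pi}\int\log\frac{1}{|x-y|}\,|f(y)|\,dy + \|H\|_\infty\Lambda$ uses $|G|\le \frac{1}{2\pi}\bigl|\log|x-y|\bigr|+|H|$ together with the fact that on a bounded domain the negative part of $\log\frac{1}{|x-y|}$ is itself bounded and can be absorbed into the constant; this is routine but should be said explicitly.
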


Last we recall the following key lemma. See \cite[Lemma 5]{FJ19b} for a detailed proof.
\begin{lemma}\label{keylem}Assume that $\gamma$ satisfies  \eqref{g1}. Then the 
following key identity holds
\begin{equation}\label{keyid}
	\partial_t w+u\gamma(v)=\A^{-1}[u\gamma(v)]
\end{equation}	in $\Omega\times (0,T_{\mathrm{max}})$.
Moreover, if there is a constant $\gamma^*>0$ such that $\gamma(v(x,t))\leq \gamma^*$ for $(x,t)\in\Omega\times[0,T_{\mathrm{max}})$, then there holds
\begin{equation}\label{boundw}
	w_*\leq w(x,t)\leq w_0(x)e^{\gamma^*t},\qquad\text{for 
	}\;\;(x,t)\in\Omega\times[0,T_{\mathrm{max}})
\end{equation}with $w_*$ being a positive constant depending only on $\Omega$ and $\|u_0\|_1$.
	
\end{lemma}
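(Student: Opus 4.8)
\textbf{Proof proposal for Lemma \ref{keylem}.}

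The plan is to split the assertion into the identity \eqref{keyid} and the two-sided bound \eqref{boundw}. For the identity, I would apply the inverse operator $\A^{-1}=(I-\Delta)^{-1}$ (with homogeneous Neumann boundary condition) to both sides of \eqref{cp1}. Writing \eqref{cp1} as $u_t=\Delta(u\gamma(v))=(\Delta-I)(u\gamma(v))+u\gamma(v)=-\A(u\gamma(v))+u\gamma(v)$, and noting that $\A^{-1}$ commutes with $\partial_t$ (since the Neumann realization of $\A$ is time-independent and $u(\cdot,t)\in\mathrm{dom}(\A)$ for $t>0$ by the regularity in Theorem \ref{local}), I obtain $\partial_t\A^{-1}[u]=-u\gamma(v)+\A^{-1}[u\gamma(v)]$, which is exactly \eqref{keyid} since $w=\A^{-1}[u]$. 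One should briefly check that $u\gamma(v)$ lies in a space on which $\A^{-1}$ acts, e.g.\ $L^p(\Omega)$ for some $p\in(1,\infty)$; this follows from the classical regularity of $(u,v)$ on $(0,T_{\mathrm{max}})$ and the positive lower bound \eqref{e00} together with \eqref{g1}, which keep $\gamma(v)$ continuous and bounded on compact time intervals.

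For the upper bound in \eqref{boundw}, I would use the nonnegativity of $\A^{-1}$ (comparison principle for the elliptic operator $I-\Delta$ under Neumann conditions): since $u,\gamma(v)\ge0$, the right-hand side $\A^{-1}[u\gamma(v)]\ge0$, hence \eqref{keyid} gives the differential inequality $\partial_t w\le -u\gamma(v)+\A^{-1}[u\gamma(v)]$. To get a clean bound I would instead note $\partial_t w = \A^{-1}[u\gamma(v)]-u\gamma(v)\le \A^{-1}[u\gamma(v)]$, and then dominate $\A^{-1}[u\gamma(v)]$ using the hypothesis $\gamma(v)\le\gamma^*$: pointwise, $\A^{-1}[u\gamma(v)]\le\gamma^*\A^{-1}[u]=\gamma^* w$ by nonnegativity/monotonicity of $\A^{-1}$. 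Thus $\partial_t w\le\gamma^* w$, and Gronwall's inequality (applied for each fixed $x$, or in $L^\infty$) yields $w(x,t)\le w_0(x)e^{\gamma^* t}$.

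For the lower bound $w\ge w_*$, the key point is mass conservation \eqref{e0}: $\|u(\cdot,t)\|_1=\|u_0\|_1$ for all $t$. Since $w=\A^{-1}[u]$ solves $w-\Delta w=u\ge0$ with $\int_\Omega w=\int_\Omega u=\|u_0\|_1$ (integrating the equation and using the Neumann condition), $w$ is a nonnegative function with fixed positive mass, and I would invoke a Harnack-type or elliptic lower-bound estimate: for the Neumann problem $w-\Delta w=u$ with $u\ge0$, one has $\inf_\Omega w\ge c(\Omega)\|u\|_1$ for a constant $c(\Omega)>0$ depending only on $\Omega$. This can be seen, e.g., via the representation $w(x)=\int_\Omega G(x,y)u(y)\,dy$ where $G$ is the Green's function of $I-\Delta$ under Neumann conditions, which is strictly positive and bounded below by a constant on $\bar\Omega\times\bar\Omega$; hence $w(x)\ge(\min G)\|u\|_1=(\min G)\|u_0\|_1=:w_*>0$. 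The main (mild) obstacle is making the lower bound for the Neumann Green's function rigorous and confirming $\inf_{\bar\Omega\times\bar\Omega}G>0$; this is standard but deserves a citation or a one-line argument since $G$ has the usual singularity only on the diagonal and is otherwise smooth and positive up to the boundary, so its infimum over the compact set $\bar\Omega\times\bar\Omega$ is attained and positive.
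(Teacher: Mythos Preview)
Your proposal is correct and follows the standard argument that the paper itself defers to \cite[Lemma~5]{FJ19b}: apply $\A^{-1}$ to \eqref{cp1} (using the Neumann condition \eqref{cp3} so that $u\gamma(v)\in\mathrm{dom}(\A)$) to obtain \eqref{keyid}, then combine the elliptic comparison principle with $\gamma(v)\le\gamma^*$ for the upper bound and the strict positivity of the Neumann Green's function of $I-\Delta$ together with mass conservation for the lower bound. The only cosmetic point is that $G$ equals $+\infty$ on the diagonal, so its infimum over $\bar\Omega\times\bar\Omega$ is attained by lower semicontinuity rather than continuity; your conclusion $\inf G>0$ remains valid.
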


\section{Global existence with non-monotonic motility}
In this section, we aim to establish global existence of classical solutions to 
problem \eqref{cp} under the assumptions of Theorem \ref{TH0}. To begin with, 
we provide  an upper control of $v$ by $w$ under the assumption \eqref{gsup}, 
which enables us to consider problem \eqref{cp} without monotonicity assumption 
on $\gamma$ in this work.

First, we notice that under the assumptions \eqref{g1} and \eqref{gsup}, there 
is a positive 
constant $\gamma^*>0$ such that $\gamma(s)\leq \gamma^*$ for 
$s\in[v_*,\infty)$ with $v_*$ being the constant given in \eqref{e00}. Then it 
results from Lemma \ref{keylem} that
\begin{equation}\label{wb0}
	w_*\leq w(x,t)\leq w_0(x)e^{\gamma^* t}\qquad\text{for 
	}\;\;(x,t)\in \Omega\times [0,T_{\mathrm{max}}).
\end{equation}
\begin{lemma}\label{comparisonA}
	Under the assumption of Theorem \ref{TH0}, for any $\varepsilon>0$,
	there is a  constant $C_\infty>0$ depending only on $\Omega$, $\gamma$, $\tau,\varepsilon$ 
	and 
	the initial data such that for all 
	$(x,t)\in\bar{\Omega}\times [0,T_{\mathrm{max}})$,
{	\begin{equation}\label{eupctrol}
		v(x,t)\leq \left(\frac{1}{1-\tau\gamma_\infty}+\varepsilon\right)(w+C_\infty).
	\end{equation}
}
\end{lemma}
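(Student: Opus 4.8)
The plan is to compare $v$ from above against the affine function $a\,w$ of the auxiliary variable $w$, with the sharp slope $a:=\frac{1}{1-\tau\gamma_\infty}+\varepsilon$; note $1-\tau\gamma_\infty>0$ by \eqref{gsup}, so $a>0$. Setting $\phi:=v-a w$, and using the key identity \eqref{keyid} to eliminate $w_t$ together with $\Delta w=w-u$ and $\tau v_t=\Delta v-v+u$ (and the algebraic identity $a w-v=-\phi$), a short computation gives
\begin{equation*}
\tau\phi_t-\Delta\phi+\phi=\bigl(1-a+a\tau\gamma(v)\bigr)u-a\tau\,\A^{-1}[u\gamma(v)]\qquad\text{in }\Omega\times(0,T_{\mathrm{max}}),
\end{equation*}
together with the boundary condition $\nabla\phi\cdot\nu=\nabla v\cdot\nu-a\,\nabla w\cdot\nu=0$ on $\partial\Omega$ (recall $w\in\mathrm{dom}(\A)$) and $\phi(\cdot,0)=v_0-a w_0$. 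The point of this choice of $a$ is that $1-a+a\tau\gamma_\infty=-\varepsilon(1-\tau\gamma_\infty)<0$; I then fix $\delta>0$ so small that $a\tau\delta\le\frac{\varepsilon}{2}(1-\tau\gamma_\infty)$, whence $1-a+a\tau s\le-c_0:=-\frac{\varepsilon}{2}(1-\tau\gamma_\infty)<0$ for every $s\le\gamma_\infty+\delta$, and use \eqref{gsup} to pick $M=M_\delta>0$ with $\gamma(s)\le\gamma_\infty+\delta$ for all $s\ge M$.

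Next I would run a truncation (Stampacchia) argument. Set $K:=\max\{M,\ \max_{\bar\Omega}(v_0-a w_0)\}>0$ and $\psi:=(\phi-K)_+$. On the super-level set $\{\psi>0\}=\{\phi>K\}$ one has $v=\phi+a w>K\ge M$ (since $w\ge0$), hence $\gamma(v)\le\gamma_\infty+\delta$, and therefore the right-hand side of the $\phi$-equation is there at most $-c_0\,u-a\tau\,\A^{-1}[u\gamma(v)]\le0$, because $u\ge0$ and $\A^{-1}$ preserves positivity. Thus $\tau\phi_t-\Delta\phi+\phi\le0$ on $\{\psi>0\}$. Testing this against $\psi\ge0$, integrating over $\Omega$ (the boundary term drops out by $\nabla\phi\cdot\nu=0$), and using $\phi\psi=\psi^2+K\psi\ge\psi^2$, I obtain
\begin{equation*}
\frac{\tau}{2}\,\frac{d}{dt}\int_\Omega\psi^2\,dx+\int_\Omega|\nabla\psi|^2\,dx+\int_\Omega\psi^2\,dx\le0.
\end{equation*}
Since $\psi(\cdot,0)=(v_0-a w_0-K)_+\equiv0$ by the choice of $K$, this forces $\psi\equiv0$, i.e.\ $v\le a w+K$ throughout $\bar\Omega\times[0,T_{\mathrm{max}})$; taking $C_\infty:=K/a$ — which depends only on $\Omega$, $\gamma$, $\tau$, $\varepsilon$ and the initial data (through $M_\delta$, $v_0$, and $w_0=\A^{-1}[u_0]$) — yields \eqref{eupctrol}. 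The only extra care is near $t=0$: since $(u,v)$ is classical only on $(0,T_{\mathrm{max}})$ but continuous up to $t=0$, I would perform the energy identity on $[\eta,T_{\mathrm{max}})$ and let $\eta\downarrow0$, using $\phi(\cdot,\eta)\to v_0-a w_0$ uniformly.

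The conceptual heart of the proof — and the reason the non-increasing hypothesis on $\gamma$ of \cite{FJ19b,JLZ22} can be discarded — is that one should not attempt to dominate $\gamma(v)$ by $\gamma$ evaluated at the barrier; instead, wherever the bound $v\le a w+K$ is violated, the value of $v$ is automatically larger than $M_\delta$, so $\gamma(v)$ is automatically within $\delta$ of $\gamma_\infty$, and the strict inequality $1-a+a\tau\gamma_\infty<0$ then absorbs the nonnegative $u$-term with a definite negative coefficient. I therefore expect the substantive part of the argument to be the identification of the correct slope $a$ and the verification of the $\phi$-equation from the key identity; the truncated energy estimate, the Neumann boundary term, and the $t\to0^+$ limit are routine.
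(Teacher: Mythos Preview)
Your argument is correct and takes a genuinely different route from the paper's proof. The paper does \emph{not} use a Stampacchia truncation. Instead, it fixes $\sigma\in(0,1)$ with $1-\sigma=1-\tau\gamma_\infty-\varepsilon_0$ and a large $C_\infty$ so that $\gamma(\sigma v+w+C_\infty)\le\gamma_\infty+\varepsilon_0/\tau$ pointwise; then, writing $\gamma(v)-\gamma(\sigma v+w+C_\infty)=\bigl(\int_0^1\gamma'(\theta v+(1-\theta)(\sigma v+w+C_\infty))\,d\theta\bigr)\,\bigl((1-\sigma)v-w-C_\infty\bigr)$ via the mean value theorem, it derives for $h:=(1-\sigma)v-w-C_\infty$ the linear inequality $\tau h_t-\Delta h+(1-g)h\le0$ with a bounded continuous coefficient $g=g(x,t)$, and concludes $h\le0$ by the classical linear parabolic comparison principle (citing \cite[Chapter~1, Theorem~2.2]{LSU}). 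So the paper absorbs the non-monotonicity of $\gamma$ into a (sign-changing) zero-order coefficient and appeals to a black-box comparison theorem, at the price of using $\gamma'$.

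Your approach trades the mean-value/$\gamma'$ device for the observation that on $\{\phi>K\}$ one automatically has $v>M_\delta$, hence $\gamma(v)$ is within $\delta$ of $\gamma_\infty$ and the source term is $\le0$; the truncated energy identity then finishes the job. This is slightly more elementary and self-contained (no external comparison principle, no differentiability of $\gamma$ beyond continuity is actually used), while the paper's proof is shorter once one is willing to quote the linear comparison principle. Both yield exactly the same bound with the same sharp slope $a=\frac{1}{1-\tau\gamma_\infty}+\varepsilon$.
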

\begin{proof} Recall that
	$\limsup\limits_{s\rightarrow\infty}\gamma(s)=\gamma_\infty<1/\tau$ and introduce 
	\begin{equation*}
		\varepsilon_0=(1-\tau\gamma_\infty)\left(1-\frac{1}{1+\varepsilon(1-\tau\gamma_\infty)}\right)\in(0,1-\tau\gamma_\infty).
	\end{equation*}	
Since $w,v$ are both non-negative, 
	we may pick $C_\infty>0$ sufficiently large such that 
	\begin{equation}\label{gb2}
		\gamma(\sigma v+w+C_\infty)\leq\gamma_\infty+\varepsilon_0/\tau,\qquad 
		\forall \;\sigma>0
	\end{equation}as well as 
	\begin{equation}\label{ini2}
		v_0(x)\leq w_0(x)+C_\infty.
	\end{equation}
	
Denote $\L[z]=\tau z_t-\Delta z+z=\tau z_t+\A z$	and observe that 
$\A^{-1}[u\gamma(v)]\geq0$ by 
comparison principle. We infer from \eqref{cp2}, the definition of $w$, the key 
identity 
\eqref{keyid} and \eqref{gb2} that
	\begin{align}
		\mathcal{L}[v]=u	=&w-\Delta w=\L[w]-\tau w_t\non\\
		=&\mathcal{L}[w]+\tau u\gamma(v)-\tau 
		\mathcal{A}^{-1}[u\gamma(v)]\nn\\
		\leq &\mathcal{L}[w]+\tau u\bigg(\gamma(v)-\gamma(\sigma 
		v+w+C_\infty)\bigg)+\tau u\gamma(\sigma v+w+C_\infty)\nn\\
		\leq& \mathcal{L}[w]+\tau u\bigg(\gamma(v)-\gamma(\sigma 
		v+w+C_\infty)\bigg)+(\tau\gamma_\infty+\varepsilon_0)u,
	\end{align}
	which entails that
	\begin{align}\label{com}
		&(1-\tau\gamma_\infty-\varepsilon_0)u\nonumber\\
		&\leq \mathcal{L}[w]+\tau 
		u\bigg(\gamma(v)-\gamma(\sigma v+w+C_\infty)\bigg)\nn\\
		&= \mathcal{L}[w]+\bigg(\tau 
		u\int_0^1\gamma'(\theta v+(1-\theta)(\sigma 
		v+w+C_\infty))d\theta\bigg)\big((1-\sigma)v-w-C_\infty\big)\nn\\
		&\leq  \mathcal{L}[w+C_\infty]+\bigg(\tau 
		u\int_0^1\gamma'(\theta v+(1-\theta)(\sigma 
		v+w+C_\infty))d\theta\bigg)\big((1-\sigma)v-w-C_\infty\big).
	\end{align}
	Now, we let $1-\sigma=1-\tau\gamma_\infty-\varepsilon_0\in(0,1)$ and notice 
	that 
	$(1-\tau\gamma_\infty-\varepsilon_0)u=\mathcal{L}[(1-\tau\gamma_\infty-\varepsilon_0)v]=\mathcal{L}[(1-\sigma)
	v]$. It follows from \eqref{com} that
	\begin{align}
		\mathcal{L}[(1-\sigma)v-w-C_\infty]\leq\bigg(\tau 
		u\int_0^1\gamma'(\theta v+(1-\theta)(\sigma 
		v+w+C_\infty))d\theta\bigg)\big((1-\sigma)v-w-C_\infty\big).	
		\end{align}
{
	Denote
\begin{equation*}
	g(x,t):=\tau u\int_0^1\gamma'(\theta v+(1-\theta)(\sigma v+w+C_\infty))d\theta
\end{equation*} which belongs to $C(\bar{\Omega}\times [0,T_{\mathrm{max}}))$ according  to Theorem \ref{local}	and let $h(x,t):=(1-\sigma)v-w-C_\infty$. The preceding inequality now reads as
\begin{equation}
	\tau\partial_t h-\Delta h+(1-g)h\leq0.
\end{equation}
Moreover, we note that $\nabla h\cdot\nu=0$ on $\partial\Omega$, and $h(x,0)=(1-\sigma)v_0-w_0-C_\infty\leq v_0-w_0-C_\infty\leq 0$ due to \eqref{ini2}.	For any fixed $T\in(0,T_{\mathrm{max}})$, since $1-g\in C_b(\bar{\Omega}\times[0,T])$, we may apply comparison principles of linear parabolic equations (see, e.g., \cite[Theorem 2.2, Chapter 1]{LSU}) to deduce that 
	that  $h(x,t)\leq0$ for all $(x,t)\in\bar{\Omega}\times[0,T]$. As a result, we get
	\begin{equation*}
		(1-\sigma)v(x,t)\leq w(x,t)+C_\infty,\qquad\text{for 
		}\;\;(x,t)\in\bar{\Omega}\times[0,T_{\mathrm{max}}).
	\end{equation*} This completes the proof by substituting the definition of $\varepsilon_0$.
}
\end{proof}

As a result, we obtain an upper 
control of $v$ by $w$ as well as a time-dependent upper bound of $v$.
\begin{corollary}\label{cor1}
	Under the assumption of Theorem \ref{TH0}, there is a  constant  $B>0$ 
	depending only on $\Omega$, $\gamma$, $\tau$ 
	and 
	the initial data such that
\begin{equation}\label{upcontrol}
	v(x,t)\leq Bw(x,t),\qquad\text{for 
		}\;\;(x,t)\in\Omega\times[0,T_{\mathrm{max}})
\end{equation}	
	and \begin{equation}\label{upbv}
		v(x,t)\leq \frac{2(w_0e^{\gamma^* 
		t}+C_\infty)}{1-\tau\gamma_\infty},\qquad\text{for 
		}\;\;(x,t)\in\Omega\times[0,T_{\mathrm{max}}).
	\end{equation}
\end{corollary}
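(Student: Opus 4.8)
The plan is to deduce both estimates directly from Lemma~\ref{comparisonA} together with the two-sided bound \eqref{wb0} on $w$; essentially all the analytic work has already been done in the comparison argument, so this is a bookkeeping step.

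First I would fix once and for all the free parameter in Lemma~\ref{comparisonA}, choosing $\varepsilon=\frac{1}{1-\tau\gamma_\infty}$ so that $\frac{1}{1-\tau\gamma_\infty}+\varepsilon=\frac{2}{1-\tau\gamma_\infty}$. This pins down a definite constant $C_\infty>0$, depending only on $\Omega$, $\gamma$, $\tau$ and the initial data, such that
\begin{equation*}
	v(x,t)\leq \frac{2}{1-\tau\gamma_\infty}\,\bigl(w(x,t)+C_\infty\bigr),\qquad (x,t)\in\bar\Omega\times[0,T_{\mathrm{max}}).
\end{equation*}
To get \eqref{upbv}, I would simply insert the upper bound $w(x,t)\leq w_0(x)e^{\gamma^*t}$ from \eqref{wb0} on the right-hand side; this yields $v(x,t)\leq \frac{2(w_0 e^{\gamma^*t}+C_\infty)}{1-\tau\gamma_\infty}$ as claimed.

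To get \eqref{upcontrol}, I would use instead the lower bound $w(x,t)\geq w_*>0$ from \eqref{wb0} (which comes from Lemma~\ref{keylem}, valid since $\gamma\le\gamma^*$ on $[v_*,\infty)$ under \eqref{g1} and \eqref{gsup}). Then $C_\infty\leq \frac{C_\infty}{w_*}\,w(x,t)$, so $w+C_\infty\leq\bigl(1+\tfrac{C_\infty}{w_*}\bigr)w$, and therefore $v(x,t)\leq Bw(x,t)$ with
\begin{equation*}
	B:=\frac{2}{1-\tau\gamma_\infty}\Bigl(1+\frac{C_\infty}{w_*}\Bigr),
\end{equation*}
which depends only on the stated data. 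There is no real obstacle here: the only point requiring a little care is to fix $\varepsilon$ \emph{before} extracting $C_\infty$ (since in Lemma~\ref{comparisonA} the constant $C_\infty$ depends on $\varepsilon$), and then to invoke both halves of \eqref{wb0} — the upper bound for \eqref{upbv} and the positive lower bound $w_*$ for \eqref{upcontrol}.
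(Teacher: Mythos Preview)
Your proposal is correct and essentially identical to the paper's proof: both derive \eqref{upcontrol} from Lemma~\ref{comparisonA} by absorbing the additive constant $C_\infty$ via the lower bound $w\ge w_*$, and both obtain \eqref{upbv} by choosing $\varepsilon=\tfrac{1}{1-\tau\gamma_\infty}$ and inserting the upper bound $w\le w_0 e^{\gamma^* t}$ from \eqref{wb0}. The only cosmetic difference is that you fix $\varepsilon$ once at the outset (and are explicit about the resulting $B$), whereas the paper first treats \eqref{upcontrol} with an unspecified $\varepsilon$ and then specializes $\varepsilon$ for \eqref{upbv}.
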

{
\begin{proof}
	Fixing any $\varepsilon>0$ in Lemma \ref{comparisonA}, then there is a positive time-independent constant $C>0$ such that
	\begin{equation*}
		v(x,t)\leq Cw(x,t)+C\qquad\text{for 
		}\;\;(x,t)\in\Omega\times[0,T_{\mathrm{max}}).
	\end{equation*}
Thanks to the strictly positive lower bound given in \eqref{boundw}, we infer that
\begin{equation}
	v(x,t)\leq C(w(x,t)+w(x,t)/w_*)=C(1+1/w_*)w(x,t):=Bw(x,t).
\end{equation}
The second upper bound estimate \eqref{upbv} simply follows from  Lemma \ref{comparisonA} and \eqref{wb0} by taking $\varepsilon=1/(1-\tau\gamma_\infty)$. This completes the proof.
\end{proof}

Next, we prove the H\"older continuity of $w$ and $v$. First,  we recall that the auxiliary function $w$ solves the initial-boundary value problem \begin{subequations}\label{cpv}
	\begin{align}
		\partial_t w + u\gamma(v) & = \mathcal{A}^{-1}[u\gamma(v)]\,, \qquad (t,x)\in (0,T_{\mathrm{max}})\times\Omega\,, \label{cpv1} \\
		- \Delta w + w & = u \,, \qquad (t,x)\in (0,T_{\mathrm{max}})\times\Omega\,, \label{cpv1.5} \\
		\nabla w \cdot \nu & = 0\,, \qquad (t,x)\in (0,T_{\mathrm{max}})\times\partial\Omega\,, \label{cpv2} \\
		w(x,0)  & = w_0(x)\,, \qquad x\in\Omega\,. \label{cpv3}
	\end{align}
\end{subequations}

For any fixed $T\in(0,T_{\mathrm{max}})$, denote $J_T=[0,T]$. Introducing $w^*=\|w_0\|_\infty e^{\gamma^*T}$ being the upper bound of $w$ on $\bar{\Omega}\times J_T$ according to Lemma \ref{keylem}, the upper bound of $v$  on $\bar{\Omega}\times J_T$ then is $v^*=2(w^*+C_\infty)/(1-\tau\gamma_\infty)$ due to \eqref{upbv}. We note that with the upper and lower bounds of $v$, 
under the assumptions of \eqref{g1} and \eqref{gsup}, one 
can find two positive constants $\gamma_*(T),\gamma^*>0$ with $\gamma_*$ depending on $T$ and $\gamma^*$ independent of $T$ such that 
$\gamma(v(x,t))\in[\gamma_*(T),\gamma^*]$ for 
$(x,t)\in\bar{\Omega}\times J_T$.

Now, we may proceed along the lines of \cite[Chapter~V, Section~7]{LSU} to establish the following local energy estimate for $w$; see also \cite[Lemma 4.1]{JLZ22}. 
\begin{lemma}\label{locenergy}  Let $\delta\in (0,1)$. Suppose that $\gamma_*\leq \gamma(v(x,t))\leq \gamma^*$ and $0<w_*\leq w(x,t)\leq w^*\leq$ for $(x,t)\in\bar{\Omega}\times J_T$.  There is $C>0$ depending on $\gamma_*,\gamma^*$ and $w^*$ such that, if $\vartheta\in C^\infty(\bar{\Omega}\times J_T)$, $0\le \vartheta\le 1$, $\sigma\in\{-1,1\}$, and $h\in\mathbb{R}$ are such that
	\begin{equation}
		\sigma w(x,t) - h \le \delta\,, \qquad (x,t)\in \mathrm{supp}\,\vartheta\,, \label{smallc1}
	\end{equation}
	then
	\begin{align}\label{locen}
		& \int_\Omega \vartheta^2 (\sigma w(t) - h)_+^2\ \ dx + \frac{\gamma_*}{2} \int_{t_0}^t \int_\Omega \vartheta^2 |\nabla (\sigma w(s) - h)_+|^2\ \ dx\mathrm{d}s \nonumber\\
		& \qquad  \le \int_\Omega \vartheta^2 (\sigma w(t_0) - h)_+^2\ \ dx + C \int_{t_0}^t \int_\Omega  \left( |\nabla\vartheta|^2 + \vartheta|\partial_t\vartheta| \right) (\sigma w(\tau) - h)_+^2\ \ dx\mathrm{d}s\nonumber \\
		& \qquad\quad + C \int_{t_0}^t  \int_{A_{h,\vartheta,\sigma}(s)} \vartheta\ \ dx \mathrm{d}s 
	\end{align}
	for $0\le t_0\le t\le T$, where 
	\begin{equation*}
		A_{h,\vartheta,\sigma}(s)\triangleq \left\{ x\in \Omega\ :\ \sigma w(x,s) > h \right\}\,, \qquad s\in [0,T]\,.
	\end{equation*}
\end{lemma}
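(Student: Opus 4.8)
The plan is to prove the local energy estimate \eqref{locen} by testing the equation satisfied by $w$ against a suitable truncation of $w$ itself, multiplied by the cutoff squared. The starting point is the parabolic form of the $w$-equation; combining \eqref{cpv1} and \eqref{cpv1.5} we have $\partial_t w - \gamma(v)\Delta w + \gamma(v) w = \mathcal{A}^{-1}[u\gamma(v)] =: f$, where, crucially, $0 \le f \le \gamma^* w$ pointwise (the upper bound coming from comparison principle applied to $\mathcal{A}^{-1}$ acting on $u\gamma(v) \le \gamma^* u$, together with $\mathcal{A}^{-1}u = w$), so $f$ is a bounded nonnegative function on $\bar\Omega\times J_T$ with $\|f\|_\infty \le \gamma^* w^*$. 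Fix $\sigma\in\{-1,1\}$ and $h\in\mathbb{R}$, and set $\phi = (\sigma w - h)_+$. Testing the equation (multiplied by $\sigma$) against $\vartheta^2\phi$ and integrating over $\Omega\times(t_0,t)$ is the heart of the argument.

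The key steps, in order, are as follows. First, the time-derivative term yields $\tfrac12\frac{d}{ds}\int_\Omega \vartheta^2 \phi^2\,dx - \int_\Omega \vartheta\,\partial_t\vartheta\,\phi^2\,dx$ after moving the $\vartheta$-derivative; integrating in $s$ produces the $\int_\Omega\vartheta^2\phi(t)^2$ and $-\int_\Omega\vartheta^2\phi(t_0)^2$ boundary terms and the $\vartheta|\partial_t\vartheta|\phi^2$ contribution to the right-hand side. Second, the diffusion term: $-\sigma\int_\Omega \gamma(v)\Delta w\,\vartheta^2\phi\,dx = \int_\Omega \gamma(v)\nabla w\cdot\nabla(\vartheta^2\phi)\,dx$ (the boundary term vanishes by \eqref{cpv2}), and since $\nabla\phi = \sigma\nabla w\,\mathbf{1}_{\{\sigma w>h\}}$ we get $\int_\Omega\gamma(v)\vartheta^2|\nabla\phi|^2\,dx + 2\int_\Omega\gamma(v)\vartheta\phi\,\nabla\vartheta\cdot\nabla\phi\,dx$; bound the first from below by $\gamma_*\int\vartheta^2|\nabla\phi|^2$ and absorb the cross term via Young's inequality, sending $\tfrac{\gamma_*}{2}\int\vartheta^2|\nabla\phi|^2$ to the left and $C\int|\nabla\vartheta|^2\phi^2$ to the right (with $C$ depending on $\gamma_*,\gamma^*$). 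Third, the zeroth-order and source terms: $\sigma\int_\Omega(\gamma(v)w - f)\vartheta^2\phi\,dx$ — here one uses that on the support of $\phi$ one has $\sigma w - h \le \delta \le 1$ (from \eqref{smallc1}) and that $w, \gamma(v), f$ are all bounded, so $|\sigma(\gamma(v)w - f)| \le C$ on $A_{h,\vartheta,\sigma}(s)$, and $\vartheta^2\phi \le \vartheta \cdot (\vartheta\phi) \le \vartheta$ on that set since $\phi \le \delta \le 1$ and $\vartheta\le 1$; this produces exactly the term $C\int_{t_0}^t\int_{A_{h,\vartheta,\sigma}(s)}\vartheta\,dx\,ds$. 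Summing the three contributions and discarding the favorable sign of $\int_\Omega\vartheta^2\phi(t)^2$ where unneeded gives \eqref{locen}.

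The step I expect to require the most care is controlling the lower-order and source terms — specifically verifying that the combination $\sigma(\gamma(v)w - \mathcal{A}^{-1}[u\gamma(v)])$ restricted to the superlevel set is bounded by a constant depending only on $\gamma_*,\gamma^*,w^*$, and that the gain from $\phi\le\delta\le 1$ lets one reduce a $\vartheta^2\phi^2$-type bound to the required $\vartheta$-measure bound over $A_{h,\vartheta,\sigma}$. The upper estimate $\mathcal{A}^{-1}[u\gamma(v)] \le \gamma^* w$ via comparison, and the pointwise inequality $\sigma w - h \le \delta$ on $\mathrm{supp}\,\vartheta$, are precisely what make this work; without the truncation smallness one could not close the estimate in the form needed for the De Giorgi–Nash–Moser iteration of \cite[Chapter~V, Section~7]{LSU}. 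A minor technical point is that $w(\cdot,t)\in W^{2,p}(\Omega)$ for all $p<\infty$ and is Hölder continuous in time only through the equation, so the testing procedure should first be justified on a smooth-in-time mollification or by a density/approximation argument; this is routine and I would relegate it to a remark, following the treatment in \cite[Lemma 4.1]{JLZ22}.
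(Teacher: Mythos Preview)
Your integration-by-parts step for the diffusion term is incorrect, and this is a genuine gap. You claim
\[
-\sigma\int_\Omega \gamma(v)\,\Delta w\,\vartheta^2\phi\,dx \;=\; \int_\Omega \gamma(v)\,\nabla w\cdot\nabla(\vartheta^2\phi)\,dx,
\]
but the equation $\partial_t w - \gamma(v)\Delta w + \gamma(v)w = f$ is \emph{not} in divergence form. Integrating by parts actually gives
\[
-\sigma\int_\Omega \gamma(v)\vartheta^2\phi\,\Delta w\,dx
= \sigma\int_\Omega \gamma(v)\,\nabla w\cdot\nabla(\vartheta^2\phi)\,dx
+ \sigma\int_\Omega \vartheta^2\phi\,\gamma'(v)\,\nabla v\cdot\nabla w\,dx,
\]
and you have silently dropped the last term. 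That term involves $\gamma'(v)\nabla v$, on which the hypotheses of the lemma give no control whatsoever (only $\gamma_*\le\gamma(v)\le\gamma^*$ is assumed, nothing on $\gamma'$ or on $\nabla v$). Since the whole point of this paper is to allow non-monotone $\gamma$ with no bound on $|\gamma'|$, this is precisely the kind of term the argument must avoid, and there is no way to absorb it with the available quantities.

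The paper circumvents this by testing the key identity in its original form $\partial_t w + u\gamma(v) = \mathcal{A}^{-1}[u\gamma(v)]$, then treating $\sigma=1$ and $\sigma=-1$ separately: in each case one first replaces $\gamma(v)$ by the appropriate \emph{constant} bound ($\gamma_*$ when $\sigma=1$, $\gamma^*$ when $\sigma=-1$, using the sign of $-\sigma u\gamma(v)$), and only \emph{afterwards} substitutes $u = w - \Delta w$ and integrates by parts. Because the coefficient is now constant, no $\nabla\gamma(v)$ term ever appears. Your treatment of the time-derivative term and of the zero-order/source terms (via $0\le \mathcal{A}^{-1}[u\gamma(v)]\le\gamma^*w^*$ and $\phi\le\delta$) is fine; the fix is entirely in the diffusion term, and once you reorganize as above the rest of your outline goes through.
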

\begin{proof}
	By \eqref{keyid},
	\begin{align}
		\frac{1}{2} \frac{d}{dt} \int_\Omega \vartheta^2 ( \sigma w - h)_+^2\ \ dx &  =  \sigma \int_\Omega \vartheta^2 (\sigma w - h)_+ \partial_t w dx + \int_\Omega  (\sigma w - h)_+^2 \vartheta \partial_t\vartheta dx \nonumber \\
		& = - \sigma \int_\Omega \vartheta^2 (\sigma w - h)_+ u \gamma(v) d x + \sigma \int_\Omega \vartheta^2 (\sigma w - h)_+ \mathcal{A}^{-1}[u\gamma(v)]dx \label{Z1}\\
		& \qquad + \int_\Omega (\sigma w - h)_+^2 \vartheta \partial_t\vartheta d x\,. \nonumber
	\end{align}
By comparison principle of elliptic equations, we observe that $\mathcal{A}^{-1}[u\gamma(v)]\leq \gamma^*w^*$. Hence,
\begin{equation*}
	\sigma \int_\Omega \vartheta^2 (\sigma w - h)_+ \mathcal{A}^{-1}[u\gamma(v)]dx \leq \gamma^*w^*\int_\Omega \vartheta^2 (\sigma w - h)_+dx.
\end{equation*}
	Either $\sigma=1$ and it follows from the boundedness of $\gamma$, \eqref{cpv1.5} and the non-negativity of $u$ and $w$ that
	\begin{align*}
		&- \sigma \int_\Omega \vartheta^2 (\sigma w - h)_+ u \gamma(v) d x \\
		& \le - \gamma_* \int_\Omega \vartheta^2 (w - h)_+ u dx \\
		& = - \gamma_* \int_\Omega \vartheta^2 (w - h)_+ ( w - \Delta w) dx \\
		& \le - \gamma_* \int_\Omega \nabla \left[ \vartheta^2 (w - h)_+ \right]\cdot \nabla w  dx \\
		& \le - \gamma_* \int_\Omega \vartheta^2 |\nabla (w - h)_+ |^2 dx  + 2 \gamma_* \int_\Omega \vartheta |\nabla\vartheta| (w-h)_+ |\nabla (w-h)_+| dx\,.
	\end{align*}
	Or $\sigma=-1$ and we infer  that
	\begin{align*}
		- \sigma \int_\Omega \vartheta^2 (\sigma w - h)_+ u \gamma(v) d x & \le  \gamma^* \int_\Omega \vartheta^2 (-w - h)_+ u\ \ dx \\
		& = \gamma^* \int_\Omega \vartheta^2 (-w - h)_+ (w - \Delta w)\ \ dx \\
		& \le \gamma^* w^* \int_\Omega \vartheta^2 (-w - h)_+\ \ dx \\	
		& \qquad + \gamma^* \int_\Omega \nabla \left[ \vartheta^2 (-w - h)_+ \right]\cdot \nabla w\ \ dx \\
		& \le - \gamma^* \int_\Omega \vartheta^2 |\nabla(-w-h)_+|^2\ \ dx \\
		& \qquad + 2 \gamma^* \int_\Omega \vartheta |\nabla\vartheta| (-w-h)_+ |\nabla (-w-h)_+|dx \\
		& \qquad + C(\gamma^*,w^*) \int_\Omega \vartheta^2 (-w - h)_+ dx\,.
	\end{align*}

	Inserting the above estimates in \eqref{Z1} and using  Young's inequality lead us to
	\begin{align*}
		&\frac{1}{2} \frac{d}{dt} \int_\Omega \vartheta^2 ( \sigma w - h)_+^2\ \ dx\\
		 & \le - \gamma_* \int_\Omega \vartheta^2 |\nabla(\sigma w-h)_+|^2\ \ dx \\
		& \qquad + \frac{\gamma_*}{2} \int_\Omega \vartheta^2 |\nabla(\sigma w-h)_+|^2\ \ dx +  C(\gamma_*,\gamma^*) \int_\Omega |\nabla\vartheta|^2 (\sigma w-h)_+^2\ \ dx \\
		& \qquad + C(\gamma^*,w^*) \int_\Omega \vartheta^2 (\sigma w - h)_+\ \ dx   + \int_\Omega (\sigma w - h)_+^2 \vartheta |\partial_t\vartheta| d x \\
		& \le -\frac{\gamma_*}{2} \int_\Omega \vartheta^2 |\nabla(\sigma w-h)_+|^2\ \ dx \\
		& \qquad +  C(\gamma_*,\gamma^*) \int_\Omega \left( |\nabla\vartheta|^2 + \vartheta |\partial_t\vartheta| \right) (\sigma w-h)_+^2\ \ dx + C(\gamma^*,w^*) \int_\Omega \vartheta^2 (\sigma w - h)_+\ \ dx\,.
	\end{align*}
	We now use \eqref{smallc1} to estimate from above the last term by
	\begin{equation*}
		C(\gamma^*,w^*) \int_\Omega \vartheta^2 (\sigma w - h)_+\ \ dx \le C(\gamma^*,w^*)\delta \int_{A_{h,\vartheta,\sigma}} \vartheta^2\ \ dx \le C(\gamma^*,w^*)\int_{A_{h,\vartheta,\sigma}} \vartheta\ \ dx\,,
	\end{equation*}
	and integrate the above differential inequality over $(t_0,t)$ to complete the proof.
\end{proof}
We are now in a position to apply  \cite[Chapter II, Theorem 8.2]{LSU} to obtain a H\"older estimate for $w$, the proof of which was given in \cite[Corollary 4.2]{JLZ22}.
\begin{corollary}\label{holderw}
Under the assumptions of Lemma \ref{locenergy}, there is $\alpha\in(0,1)$ depending on $\gamma_*,\gamma^*,w^*,\delta,N$ and the initial data such that 
$w\in C^{\alpha}(\bar{\Omega}\times J_T)$.
\end{corollary}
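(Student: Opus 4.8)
The proof is an application of the De Giorgi--Nash--Moser regularity theory in the form of \cite[Chapter~II, Theorem~8.2]{LSU}, so the task reduces to verifying that $w$ belongs to the relevant parabolic De Giorgi class. Recall first that, by Lemma \ref{keylem}, $w$ is already bounded on $\bar\Omega\times J_T$ with $0<w_*\le w\le w^*$, and fix $\delta\in(0,1)$ as in Lemma \ref{locenergy}. The content of that lemma is exactly that for every cutoff $\vartheta\in C^\infty(\bar\Omega\times J_T)$ with $0\le\vartheta\le1$, every $\sigma\in\{-1,1\}$, and every level $h$ satisfying the admissibility constraint \eqref{smallc1} on $\mathrm{supp}\,\vartheta$, the truncation $(\sigma w-h)_+$ obeys the energy inequality \eqref{locen}: its right-hand side has precisely the structure --- a term at the initial time $t_0$, a term quadratic in $\bigl(|\nabla\vartheta|^2+\vartheta|\partial_t\vartheta|\bigr)$ times $(\sigma w-h)_+^2$, and a term controlled by $\iint\vartheta$ over the super-level set --- required for membership in the class $\mathcal{B}_2$ of \cite[Chapter~II, \S7]{LSU}, the parameter $\delta$ being exactly the one that restricts which levels are admissible. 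Hence both $w$ and $-w$ (via $\sigma=-1$) belong to $\mathcal{B}_2(\bar\Omega\times J_T)$, with structural constants depending only on $\gamma_*$, $\gamma^*$, $w^*$, $\delta$, and $N$.

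With this membership in hand, \cite[Chapter~II, Theorem~8.2]{LSU} directly furnishes $\alpha\in(0,1)$ and a constant, depending only on $N$, $\gamma_*$, $\gamma^*$, $w^*$, $\delta$, $\Omega$, $T$, and the modulus of continuity of $w_0$, such that $w\in C^\alpha(\bar\Omega\times J_T)$. Two points deserve a remark. Near the lateral boundary $\partial\Omega\times(0,T)$, the homogeneous Neumann condition \eqref{cpv2} together with the smoothness of $\partial\Omega$ ensures that the boundary integrals arising from the integration by parts in the proof of Lemma \ref{locenergy} vanish, so \eqref{locen} holds for cutoffs $\vartheta$ that need not vanish on $\partial\Omega$, which is precisely what the up-to-the-boundary De Giorgi argument needs. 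Near the initial slice, \eqref{locen} with $t_0=0$, combined with the continuity of $w_0$ (a consequence of $w_0\in W^{2,p}(\Omega)$ with $p>N$, itself a consequence of \eqref{ini}), supplies the modulus of continuity down to $t=0$.

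The only genuinely delicate point --- but one already absorbed into the framework of \cite[Chapter~II]{LSU} --- is the admissibility restriction \eqref{smallc1}: the Caccioppoli inequality \eqref{locen} is available only for levels $h$ lying within $\delta$ of the extrema of $w$ on $\mathrm{supp}\,\vartheta$. This causes no harm, because the De Giorgi oscillation-reduction argument invokes \eqref{locen} only for levels within the current oscillation of $w$ on the cylinder being examined, and since $\mathrm{osc}\,w\le w^*-w_*$ is bounded, the oscillation falls below $\delta$ after a fixed finite number of dyadic contractions of the cylinder, after which the restriction is vacuous; the finitely many coarse scales contribute only a bounded factor. Beyond carefully matching the hypotheses of \cite[Chapter~II, Theorem~8.2]{LSU} --- exactly as carried out in \cite[Corollary~4.2]{JLZ22} --- there is no substantive obstacle.
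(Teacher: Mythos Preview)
Your proposal is correct and follows the same route as the paper: the paper simply invokes \cite[Chapter~II, Theorem~8.2]{LSU}, noting that the local energy estimate of Lemma~\ref{locenergy} is precisely the Caccioppoli-type inequality placing $\pm w$ in the parabolic De~Giorgi class $\mathcal{B}_2$, and refers to \cite[Corollary~4.2]{JLZ22} for the details. Your writeup supplies those details (the handling of the Neumann boundary, the initial regularity, and the role of the level restriction~\eqref{smallc1}) explicitly, but the argument is the same.
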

Next, we use standard Schauder's estimate to derive  H\"older continuity for $v$.
\begin{lemma}\label{holderv}
	The function $v$ belongs to $C^{\alpha,1+\alpha}(\bar{\Omega}\times J_T)$ with the exponent  $\alpha$ given in Corollary~\ref{holderw}.
\end{lemma}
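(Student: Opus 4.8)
\textbf{Proof proposal for Lemma~\ref{holderv}.} The plan is to treat \eqref{cp2} as a linear parabolic equation for $v$ with right-hand side $u$, and to bootstrap H\"older regularity from the source term using the parabolic Schauder theory. The starting point is the observation, already recorded just before Lemma~\ref{locenergy}, that on $\bar\Omega\times J_T$ we have the two-sided bound $0<w_*\le w\le w^*$ and the control $v\le Bw$ from \eqref{upcontrol}, together with $v\ge v_*$ from \eqref{e00}; consequently $\gamma(v)\in[\gamma_*(T),\gamma^*]$ is bounded away from $0$ and $\infty$. From \eqref{cpv1.5} we have $u=w-\Delta w$, but a more convenient route is to use the elliptic regularity $u=\A w$ directly: since $w\in C^\alpha(\bar\Omega\times J_T)$ by Corollary~\ref{holderw}, we first need to upgrade this to spatial $C^{2,\alpha}$-type control.

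The key steps, in order, are as follows. First, from $u=\A w$ and $w\in C^\alpha$ in the joint variables, apply elliptic $W^{2,p}$ estimates for the Neumann problem $-\Delta w(\cdot,t)+w(\cdot,t)=u(\cdot,t)$ frozen at each time: since $u(\cdot,t)$ is bounded in $L^\infty(\Omega)$ uniformly on $J_T$ (by \eqref{boundw} applied to $u=\A w$, or directly by elliptic regularity from the bound on $w$), we get $w(\cdot,t)\in W^{2,p}(\Omega)\hookrightarrow C^{1,\beta}(\bar\Omega)$ for any $\beta\in(0,1)$ and $p$ large, uniformly in $t\in J_T$. Hence $u=w-\Delta w$ and more importantly $\gamma(v)\in C^\alpha(\bar\Omega\times J_T)$ (as a composition of the $C^3$ function $\gamma$ with the H\"older-continuous $v\le Bw$, once we know $v$ is at least H\"older — so this step must be ordered carefully: one first proves $v\in C^{\alpha'}$ for some small $\alpha'$ purely from $u\in L^\infty\cap C^{\gamma_0}_{loc}$ via parabolic interior/boundary estimates, which is the classical De~Giorgi–Nash–Moser result for \eqref{cp2} since the coefficients are constant). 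Concretely: \eqref{cp2} is $\tau v_t-\Delta v+v=u$ with $u\in L^\infty(\Omega\times J_T)$; the classical parabolic maximum-principle/De~Giorgi estimates (see \cite[Chapter~III]{LSU}) already give $v\in C^{\alpha_0}(\bar\Omega\times J_T)$ for some $\alpha_0\in(0,1)$. Second, with $v\in C^{\alpha_0}$, the source $u=w-\Delta w$ is shown to be H\"older: from the key identity \eqref{keyid}, $\partial_t w=\A^{-1}[u\gamma(v)]-u\gamma(v)$, and both terms on the right are H\"older in time (the nonlocal term because $\A^{-1}$ maps $L^\infty$ into $W^{2,p}$ and $u\gamma(v)$ is bounded; the local term because... ) — actually the cleanest path is: $w\in C^\alpha(\bar\Omega\times J_T)$ gives $\gamma(v)\in C^{\alpha\alpha_0}$ hence the elliptic equation $-\Delta w+w=u$ has $C^{\min(\alpha,\ldots)}$ data, so by interior and boundary elliptic Schauder $w(\cdot,t)\in C^{2,\alpha'}$ with modulus H\"older-continuous in $t$, whence $u=w-\Delta w\in C^{\alpha',\alpha'}(\bar\Omega\times J_T)$. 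Third, insert this H\"older source into \eqref{cp2} and invoke parabolic Schauder estimates \cite[Chapter~IV, Theorem~5.3]{LSU} for the constant-coefficient operator $\tau\partial_t-\Delta+1$ with Neumann boundary data: this yields $v\in C^{1+\alpha',\,(1+\alpha')/2}$ in $(x,t)$, in particular $v\in C^{\alpha,1+\alpha}(\bar\Omega\times J_T)$ after possibly shrinking $\alpha$; compatibility at $t=0$ is fine because $v_0\in W^{1,\infty}$ suffices for this level of regularity.

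\textbf{Main obstacle.} The delicate point is the circularity between the regularity of $v$ and that of the source $u=w-\Delta w=\A w$: proving $u\in C^\alpha$ seems to need $\gamma(v)\in C^\alpha$, which needs $v\in C^\alpha$, which is what we are after. The resolution — and the step I expect to require the most care — is to break the loop by first extracting a \emph{weak} (small, $T$-dependent) H\"older exponent for $v$ directly from $u\in L^\infty(\Omega\times J_T)$ via the De~Giorgi–Nash–Moser theory applied to \eqref{cp2} alone (whose coefficients are constant, so no regularity of $\gamma$ is needed), and only then feeding this back through the elliptic equation for $w$ to obtain a genuine H\"older source, after which parabolic Schauder closes the argument. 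One must also keep track that all constants and the exponent $\alpha$ are allowed to depend on $T$, which is consistent with the statement. Finally, one needs to check the boundary regularity near $\partial\Omega$ (smooth boundary, homogeneous Neumann condition), which is standard and covered by the cited results in \cite{LSU}; the short-time behavior near $t=0$ is handled because \eqref{ini} provides $v_0\in W^{1,\infty}(\Omega)$, more than enough for $C^{1+\alpha}$ in space.
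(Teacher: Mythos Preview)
Your proposal tries to reach the H\"older regularity of $v$ via H\"older regularity of the source $u$ in \eqref{cp2}, and you correctly flag the circularity this creates. But your proposed resolution does not actually close the loop. After De~Giorgi--Nash--Moser hands you $v\in C^{\alpha_0}$, you write ``the elliptic equation $-\Delta w+w=u$ has $C^{\min(\alpha,\ldots)}$ data, so by elliptic Schauder $w(\cdot,t)\in C^{2,\alpha'}$'': this is exactly the step that fails. The data in that equation is $u$, and knowing that $v$ (hence $\gamma(v)$) is H\"older says nothing about $u$; the function $\gamma(v)$ does not even appear in $-\Delta w+w=u$. One \emph{could} instead run parabolic Schauder on the quasilinear equation $w_t-\gamma(v)\Delta w+\gamma(v)w=\A^{-1}[u\gamma(v)]$ with the now-H\"older coefficient $\gamma(v)$ and smooth nonlocal right-hand side, obtaining $w\in C^{2+\alpha',\cdot}$ and hence $u=w-\Delta w\in C^{\alpha'}$; but you do not propose this, and it is a substantially heavier argument than what the lemma warrants. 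Note also that your justification for $u\in L^\infty$ --- ``by \eqref{boundw} applied to $u=\A w$, or by elliptic regularity from the bound on $w$'' --- is wrong on both counts: \eqref{boundw} bounds $w$, $\A$ is an unbounded operator, and elliptic regularity transfers from $u$ to $w$, not the reverse. The bound $u\in L^\infty(\bar\Omega\times J_T)$ is nonetheless available, but from the local existence Theorem~\ref{local}.

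The paper sidesteps the entire circularity with one algebraic observation you are missing: set $r:=\A^{-1}[v]$. Applying $\A^{-1}$ to \eqref{cp2} (it commutes with $\partial_t$, $\Delta$ and the identity) gives
\[
\tau r_t-\Delta r+r=\A^{-1}[u]=w,
\]
with homogeneous Neumann data and $r(\cdot,0)=\A^{-1}[v_0]\in C^{2+\alpha}(\bar\Omega)$. The source is $w$, which is \emph{already} known to lie in $C^\alpha(\bar\Omega\times J_T)$ by Corollary~\ref{holderw}; a single application of parabolic Schauder then yields $r\in C^{2+\alpha,1+\alpha}(\bar\Omega\times J_T)$, and $v=\A r=r-\Delta r\in C^{\alpha,1+\alpha}(\bar\Omega\times J_T)$ follows immediately. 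No regularity of $u$ is needed at any point, and the exponent $\alpha$ from Corollary~\ref{holderw} is preserved exactly.
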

\begin{proof}
	Set $r \triangleq \mathcal{A}^{-1}[v]$ and  $r_0 \triangleq\mathcal{A}^{-1}[v_0]$. In view of the regularity $v_0\in W^{1,\infty}(\Omega)$, there holds  that $r_0\in C^{2+\alpha}(\bar{\Omega})$ with $\alpha\in(0,1)$ being the exponent given in  Corollary~\ref{holderw}. Besides, we infer from \eqref{cp2} that $r$ is a solution to
\begin{equation*}
	\begin{cases}
		\tau r_t-\Delta r+r=w,\qquad &(x,t)\in 
		\Omega\times(0,T_{\mathrm{max}})\\
		\nabla r\cdot \nu=0,\qquad &(x,t)\in 
		\partial\Omega\times(0,T_{\mathrm{max}})\\
		r(x,0)=r_0(x)\qquad& x\in\Omega,
	\end{cases}
\end{equation*}
	so that Schauder's theory of heat equations, along with Corollary~\ref{holderw}, ensures that $r$ belongs $C^{2+\alpha,1+\alpha}(\bar{\Omega}\times J_T)$. As a result, we obtain that $v =  r-\Delta r\in C^{\alpha,1+\alpha}(\bar{\Omega}\times J_T)$.
\end{proof}
\begin{remark}\label{timedependence}
	 We remark that the exponent $\alpha$ and the H\"older estimates of $v$ obtained above depend on $\gamma_*,\gamma^*,w^*$, $\Omega$ and the initial data. As a result, in this section, they depend on $T$  in view of the dependence of $\gamma_*$ and $w^*$ on time.
\end{remark}

\noindent\textbf{Proof of Theorem \ref{TH0}.} 
With the H\"older continuity of $v$ at hand, we can regard $-\gamma(v)\Delta$ as a generator of evolution operators. Then according to  \cite[Lemma 4.4 and Proposition 4.5]{JLZ22}, we have  for any $q\in(1,\infty)$
\begin{equation}\label{final}
\sup\limits_{0\leq t\leq T}(\|u(\cdot,t)\|_{q}+\|v(\cdot,t)\|_{W^{1,\infty}})\leq C(q,T).
\end{equation}
 Note that the results mentioned above in \cite{JLZ22} hold for the extended system \eqref{cpn}, which covers our case by simply taking $f=n\equiv0$.

Next we write the first equation as
\begin{equation*}
	u_t=\nabla\cdot(\gamma(v(x,t))\nabla u)+\nabla \cdot(u\gamma'(v)\nabla v).
\end{equation*}
Note that $\gamma(v(x,t))\geq\gamma_*$  and  $u\gamma'(v)\nabla v$ is bounded in $L^\infty(0,T; L^q(\Omega))$ with any $q\in(1,\infty)$ by \eqref{final}. This allows us to apply \cite[Lemma A.1]{TaoWin12} by taking $m=1$, fixing any $q_1>N+2$, $q_2>\frac{N+2}{2}$ and choosing $p_0$ sufficiently large. Thus, we finally deduce that
\begin{equation*}
	\sup\limits_{0\leq t\leq T}\|u(\cdot,t)\|_\infty\leq C(T).
\end{equation*}
According to Theorem \ref{local}, we deduce that $T_{\mathrm{max}}=\infty$ and this finishes the proof of Theorem \ref{TH0}.

}
\section{Uniform-in-time boundedness with non-vanishing motility}
In this section, we study uniform-in-time boundedness of classical solution to 
problem \eqref{cp} with  non-vanishing motility. More 
precisely,    we assume within this section that there are two 
positive constants 
$\gamma_*$ and $\gamma^*$ such that 
\begin{equation}\label{ulbg}
	0<\gamma_*\leq \gamma(\cdot)\leq \gamma^*\;\;\text{on}\;(0,\infty).
\end{equation}
If in particular $\gamma$ is non-increasing on $(0,\infty)$, i.e., 
$\gamma'(\cdot)\leq 0$ on $(0,\infty)$, then
one has $\gamma^*=\gamma(v_*)$.

First, we prove uniform-in-time $L^p$-estimates for $w$ with any $1\leq p<\infty.$
\begin{lemma}\label{Lp}
	Under the assumption of \eqref{g1} and \eqref{ulbg}, for any $1\leq 
	p<\infty$  there holds
	\begin{equation}
		\|w(\cdot,t)\|_p\leq C(p)\,\qquad \forall \;t\in[0,T_{\mathrm{max}}),
	\end{equation}where $C(p)>0$ is a constant depending only on $\Omega$, 
	$\tau$, $\gamma$, $p$ and the initial data.
\end{lemma}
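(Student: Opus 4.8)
The plan is to run an $L^p$ energy estimate directly on the key identity \eqref{keyid}, exploiting the two bounds in \eqref{ulbg} in complementary ways --- the upper bound $\gamma^*$ to control the non-local term on the right via the elliptic comparison principle, and the lower bound $\gamma_*$ to produce a dissipative gradient term --- and then to close the estimate with a Gagliardo--Nirenberg interpolation that is fed by the conserved mass. The step from $\|w\|_{p/2}$ to $\|w\|_p$ is iterated over the dyadic exponents $p=2,4,8,\dots$, and the remaining exponents follow by the inclusion of $L^p(\Omega)$ spaces on the bounded domain.

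First I would record the two facts that make the induction start. By Lemma~\ref{keylem} one has the time-independent lower bound $w\ge w_*>0$ on $\bar\Omega\times[0,T_{\mathrm{max}})$; and integrating the elliptic identity $w-\Delta w=u$ over $\Omega$ with the Neumann boundary condition, together with the mass conservation \eqref{e0}, yields $\|w(\cdot,t)\|_1=\|u_0\|_1$ for every $t\in[0,T_{\mathrm{max}})$. Thus $\sup_t\|w(\cdot,t)\|_1<\infty$, which is the base case.

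Next, fix $p\in\{2,4,8,\dots\}$ and test \eqref{keyid} with $w^{p-1}$, which is legitimate because $w$ is a positive classical solution, to get
\[
\frac1p\frac{d}{dt}\int_\Omega w^p=\int_\Omega w^{p-1}\,\A^{-1}[u\gamma(v)]-\int_\Omega u\gamma(v)\,w^{p-1}.
\]
Since $0\le u\gamma(v)\le\gamma^* u$, the comparison principle for $\A$ gives $\A^{-1}[u\gamma(v)]\le\gamma^*\A^{-1}[u]=\gamma^* w$, so the first term is bounded by $\gamma^*\int_\Omega w^p$. For the second term, using $\gamma(v)\ge\gamma_*$, the substitution $u=w-\Delta w$, the Neumann condition for $w$, and integration by parts,
\[
\int_\Omega u\gamma(v)\,w^{p-1}\ge\gamma_*\int_\Omega(w-\Delta w)w^{p-1}=\gamma_*\int_\Omega w^p+\gamma_*(p-1)\int_\Omega w^{p-2}|\nabla w|^2.
\]
Combining the two, one is led to
\[
\frac1p\frac{d}{dt}\int_\Omega w^p+\gamma_*(p-1)\int_\Omega w^{p-2}|\nabla w|^2\le(\gamma^*-\gamma_*)\int_\Omega w^p.
\]

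The hard part is the right-hand side: because $\gamma$ is not assumed monotone, the gap $\gamma^*>\gamma_*$ is genuine and, left as is, the term $(\gamma^*-\gamma_*)\int_\Omega w^p$ only yields exponential-in-time growth. The way out is to spend the dissipation against the conserved mass. Combining the Gagliardo--Nirenberg inequality on the bounded domain $\Omega$ (valid in every dimension $N\ge1$) with Young's inequality gives $\|f\|_2^2\le\delta\|\nabla f\|_2^2+C_\delta\|f\|_1^2$, and applying this with $f=w^{p/2}$ yields $\int_\Omega w^p\le\delta\,(p^2/4)\int_\Omega w^{p-2}|\nabla w|^2+C_\delta\|w\|_{p/2}^{p}$. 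Choosing $\delta$ small, depending only on $p,\gamma_*,\gamma^*$, so that the gradient term is absorbed with room to spare, one arrives at
\[
\frac{d}{dt}\int_\Omega w^p\le-p\int_\Omega w^p+C(p)\,\|w(\cdot,t)\|_{p/2}^{p}.
\]
Since $\|w(\cdot,t)\|_{p/2}$ is already uniformly bounded in $t$ by the induction hypothesis, and $\|w_0\|_p<\infty$ because $w_0\in W^{2,q}(\Omega)$, an elementary ODE comparison gives $\sup_t\|w(\cdot,t)\|_p\le C(p)$. Running this over $p=2,4,8,\dots$ and then using $\|w\|_p\le|\Omega|^{1/p-1/q}\|w\|_q$ for $1\le p\le q$ to cover the intermediate exponents completes the proof. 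The main obstacle, to restate it, is precisely that abandoning monotonicity reintroduces the destabilizing term $(\gamma^*-\gamma_*)\int_\Omega w^p$; it is the interplay of the lower bound $\gamma_*>0$ (for dissipation) with the mass-fed interpolation (which beats the $L^p$ norm down to the conserved $L^1$ norm) that restores time-uniformity.
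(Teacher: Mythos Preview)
Your proof is correct and follows essentially the same route as the paper: both test the key identity against $w^{p-1}$, use $\gamma\le\gamma^*$ with the elliptic comparison principle to bound the non-local term by $\gamma^*\int_\Omega w^p$, use $\gamma\ge\gamma_*$ together with $u=w-\Delta w$ to extract the dissipative gradient term, close with Gagliardo--Nirenberg fed by the conserved $L^1$ mass, and iterate over dyadic exponents. The only cosmetic difference is bookkeeping of the damping: the paper keeps the term $p\gamma_*\int_\Omega w^p$ (arising from the $w$ in $u=w-\Delta w$) explicitly on the left throughout, whereas you merge it into the right-hand side as $(\gamma^*-\gamma_*)\int_\Omega w^p$ and then recover the damping from the leftover gradient via the ``room to spare'' --- both are equivalent, though the paper's version makes the origin of the damping a bit more transparent.
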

\begin{proof}
	Thanks to the upper and lower bounds of $\gamma$, the non-negativity of $u$, 
	the fact $w-\Delta w=u$, 
	and the comparison principle of elliptic equations, we deduce from the key 
	identity 
	that
	\begin{equation}\label{KeyIneq}
	w_t-\gamma_*\Delta w+\gamma_*w=w_t+\gamma_*u\leq w_t+\gamma(v)u=\A^{-1}[u\gamma(v)]\leq \A^{-1}[\gamma^*u]= \gamma^*w.
	\end{equation}
A multiplication of \eqref{KeyIneq} by $pw^{p-1}$ with any $p\geq2$ and an 
integration over $\Omega$ give rise to
\begin{equation}
		\frac{d}{dt}\int_\Omega w^pdx+\frac{p-1}{4p}\gamma_*\int_\Omega |\nabla w^{p/2}
	|^2dx+p\gamma_*\int_\Omega w^p dx\leq 
	p\gamma^*\int_\Omega w^{p}dx.
\end{equation}
Recall the  Gagliardo-Nirenberg inequality
\begin{equation}
	\|z\|\leq C\|\nabla z\|^{\alpha}\|z\|^{1-\alpha}_{1}+C\|z\|_{1}
\end{equation}
with
\begin{equation}
	\alpha=\frac{N}{N+2}\in(0,1).
\end{equation}
Letting $z=w^{p/2}$, it follows by the above inequality and Young's inequality 
that
\begin{align*}
	p\gamma^*\int_\Omega w^pdx\leq& Cp\gamma^*\bigg(\int_\Omega|\nabla w^{p/2}|^2dx\bigg)^{\alpha}\bigg(\int_\Omega w^{p/2}\bigg)^{2(1-\alpha)}+Cp\gamma^*\bigg(\int_\Omega w^{p/2}dx\bigg)^2\\
	\leq & \frac{\gamma_*}{16} \int_\Omega|\nabla w^{p/2}|^2dx+C(N,p,\gamma_*,\gamma^*)\bigg(\int_\Omega w^{p/2}dx\bigg)^2.
\end{align*} Since $p\geq2$, we notice that $\frac{p-1}{4p}\geq\frac18$.
Thus, we arrive at
\begin{equation}
		\frac{d}{dt}\int_\Omega w^pdx+\frac{\gamma_*}{16}\int_\Omega |\nabla w^{p/2}
	|^2dx+p\gamma_*\int_\Omega w^p dx\leq  C(N,p,\gamma_*,\gamma^*)\bigg(\int_\Omega w^{p/2}dx\bigg)^2.
\end{equation}
Solving the above ODI with $p=p_k=2^k$, 
$(k=1,2,3,...)$ successively in view of the fact $\int_\Omega wdx=\int_\Omega 
udx=\|u_0\|_1$ yields to the $L^p$-estimates for $w$ with any $p\geq2$ and the 
remainder 
case $1\leq p<2$ follows by the obvious embedding $L^2\hookrightarrow L^p$ on bounded domains. This 
completes the proof.
\end{proof}
Now we are ready to prove the uniform-in-time bound of $w$. 
\begin{lemma}\label{wbound}
	Under the assumptions of Theorem \ref{TH1}, there is a positive constant 
	$C$ depending only on $\Omega$, $\tau$, $\gamma$  and the initial data such 
	that
	\begin{equation}
	\|w(\cdot,t)\|_\infty\leq C,\qquad\forall\,t\in[0,T_{\mathrm{max}}).
	\end{equation}
\end{lemma}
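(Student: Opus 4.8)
The plan is to upgrade the uniform-in-time $L^p$-bound for $w$ from Lemma~\ref{Lp} to an $L^\infty$-bound via a Moser--Alikakos iteration applied directly to the key differential inequality \eqref{KeyIneq}, namely
\[
  w_t - \gamma_* \Delta w + \gamma_* w \le \gamma^* w, \qquad \nabla w\cdot\nu = 0 \text{ on } \partial\Omega.
\]
First I would test \eqref{KeyIneq} with $p\, w^{p-1}$ for $p = p_k := 2^k$ and integrate over $\Omega$, exactly as in the proof of Lemma~\ref{Lp}, to obtain
\[
  \frac{d}{dt}\int_\Omega w^{p}\,dx + \frac{\gamma_*}{16}\int_\Omega |\nabla w^{p/2}|^2\,dx + p\gamma_*\int_\Omega w^{p}\,dx \le C_1\, p\, \gamma^* \int_\Omega w^{p}\,dx,
\]
where $C_1$ absorbs the constant from \eqref{KeyIneq}. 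The point now is to control the right-hand side by the Gagliardo--Nirenberg inequality with $z = w^{p/2}$ and the lower-order norm taken in $L^1$, which turns $\int_\Omega w^p = \|z\|_2^2$ into $\varepsilon\|\nabla z\|_2^2 + C(\varepsilon) \big(\int_\Omega w^{p/2}\,dx\big)^2$; choosing $\varepsilon$ so as to absorb the gradient term into the left-hand side yields
\[
  \frac{d}{dt}\int_\Omega w^{p}\,dx + p\gamma_*\int_\Omega w^{p}\,dx \le C_2(N,\gamma_*,\gamma^*)\, p^{\,\theta}\, \Big(\int_\Omega w^{p/2}\,dx\Big)^{2},
\]
for a fixed power $\theta$ independent of $k$ (coming from the $\varepsilon$-dependence of the GN constant). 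Solving this ODE inequality gives
\[
  \int_\Omega w^{p_k}(\cdot,t)\,dx \le \max\!\Big\{ \int_\Omega w_0^{p_k}\,dx,\ \frac{C_2\, p_k^{\theta}}{p_k\gamma_*}\sup_{s}\Big(\int_\Omega w^{p_{k-1}}(\cdot,s)\,dx\Big)^{2}\Big\},
\]
so setting $M_k := \sup_{t}\int_\Omega w^{p_k}(\cdot,t)\,dx$ we arrive at a recursion of the form $M_k \le \max\{b^{p_k}, C\, a^{k} M_{k-1}^2\}$ with constants $a,b,C$ depending only on $N,\gamma_*,\gamma^*,\Omega$ and the initial data (here $b^{p_k} \ge \int_\Omega w_0^{p_k}$ since $w_0\in L^\infty$).

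The remaining step is the standard extraction of the $L^\infty$-bound from this recursion. Taking logarithms and iterating, $\log M_k \le 2^k\log M_0 + (\text{geometric series in } k)$, whence $M_k^{1/p_k} = M_k^{2^{-k}}$ stays bounded as $k\to\infty$; letting $k\to\infty$ and using $\|w(\cdot,t)\|_{p_k}\to\|w(\cdot,t)\|_\infty$ uniformly in $t$ gives
\[
  \sup_{t\in[0,T_{\max})} \|w(\cdot,t)\|_\infty \le C,
\]
with $C$ depending only on the stated data. The initialization $M_1 = \sup_t\int_\Omega w^2$ is finite by Lemma~\ref{Lp}, so the iteration is well-posed. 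Note that monotonicity of $\gamma$ is never used: only the two-sided bound \eqref{ulbg} enters, precisely through the comparison-type inequality \eqref{KeyIneq}, which is the whole gain over the previous approach.

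The main obstacle, and the only delicate bookkeeping, is tracking the $p$-dependence of the constant on the right-hand side: one must check that after absorbing the gradient term the surviving constant grows only polynomially (or at worst like $a^k$) in $p_k = 2^k$, since a super-exponential growth in $k$ would destroy the boundedness of $M_k^{2^{-k}}$. This is why it is important to put the lower-order Gagliardo--Nirenberg norm in $L^1$ (so the exponent $2(1-\alpha)$ in $\big(\int w^{p/2}\big)^{2(1-\alpha)}$ can be handled by Young's inequality producing the clean square $\big(\int w^{p/2}\big)^2$, at the cost of an $\varepsilon^{-\alpha/(1-\alpha)}$ factor that is still only polynomial in $p$). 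Everything else — the GN inequality, Young's inequality, solving the scalar ODE inequality, and passing to the limit in $k$ — is routine. An alternative, essentially equivalent, route would be to invoke a ready-made $L^p$--$L^\infty$ smoothing estimate for the heat semigroup generated by $\gamma_*(\Delta - I)$ applied to the mild formulation of \eqref{KeyIneq}; I would prefer the direct iteration since it keeps all constants explicitly under control and mirrors the structure already set up in Lemma~\ref{Lp}.
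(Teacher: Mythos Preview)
Your proposal is correct but follows a genuinely different route from the paper. The paper does \emph{not} iterate: it instead introduces the solution $z$ of the linear problem $z_t-\gamma_*\Delta z+\gamma_* z=\gamma^* w$ with $z(\cdot,0)=w_0$, writes $z$ via the variation-of-constants formula for the semigroup $e^{\gamma_*(\Delta-I)t}$, and bounds $\|z(\cdot,t)\|_\infty$ using the $L^p$--$L^\infty$ smoothing of the heat semigroup together with the single $L^p$-bound on $w$ from Lemma~\ref{Lp} for one $p>N/2$; the comparison principle applied to \eqref{KeyIneq} then gives $w\le z\le C$. This is exactly the ``alternative route'' you mention at the end and dismiss. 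Your Moser--Alikakos iteration is longer and requires the bookkeeping you describe (polynomial growth of constants in $p$), but it is self-contained and avoids semigroup estimates; the paper's argument is shorter and more direct, needing only one $L^p$ level rather than the full doubling sequence. Both rest on the same two-sided bound \eqref{ulbg} through the inequality \eqref{KeyIneq}, so neither uses monotonicity of $\gamma$.
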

\begin{proof}
The proof is based on a simple comparison argument. Let $z$ be 
the solution of the following linear equation:
\begin{equation}
	\begin{cases}
		z_t-\gamma_*\Delta z+\gamma_* z=\gamma^*w \qquad &(x,t)\in 
		\Omega\times (0,T_{\mathrm{max}})\\
		\nabla z\cdot \nu=0\qquad &(x,t)\in 
		\partial\Omega\times(0,T_{\mathrm{max}}) \\
		z(x,0)=w_0(x)\qquad &x\in\Omega.
	\end{cases}
\end{equation}
Then we have the following representation formula
\begin{equation*}
	z(\cdot, t)=e^{\gamma_*(\Delta-1)t}w_0+\gamma^*\int_0^te^{\gamma_*(\Delta-1)(t-s)}
	w(\cdot, s)ds.
\end{equation*}
It follows from above and Lemma \ref{Lp} that
\begin{align*}
	\|z(\cdot,t)\|_\infty\leq&\|e^{\gamma_*(\Delta-1)t}w_0\|_\infty+\gamma^*\int_0^t
	\|e^{\gamma_*(\Delta-1)(t-s)}w(s)\|_\infty ds\\
	\leq& C\|w_0\|_\infty+C\gamma^*\int_0^t e^{-\gamma_*(t-s)}
	(t-s)^{-\frac{N}{2p}}\|w(s)\|_pds\\
	\leq &C(\|w_0\|_\infty,\gamma_*,\gamma^*,p,N,\Omega)
\end{align*}provided that $p>N/2$, since
\begin{equation*}
	\int_0^\infty e^{-\gamma_*s}s^{-\frac{N}{2p}}ds<\infty.
\end{equation*}
Then by comparison principle of heat 
equations, we have $0\le w\leq z\leq C(\|w_0\|_\infty,\gamma_*,\gamma^*,N,\Omega)$ in 
view of \eqref{KeyIneq}. This 
completes 
the proof.
\end{proof}

\noindent\textbf{Proof of Theorem \ref{TH1}.}  {Fix any $T\in(0, T_{\mathrm{max}})$. Although we do not have the upper bound of $v$ at the present stage, by Lemma \ref{locenergy}, the strictly positive lower and upper bounds assumption \eqref{ulbg} together with the uniform-in-time upper bound for $w$ still allows us to establish the  local energy estimate \eqref{locen} for $w$, where the constant $C>0$ depend on $\gamma_*,\gamma^*$ and $\sup\limits_{t\in[0,T_{\mathrm{max}})}\|w\|_\infty$, and thus are both independent of $T$. As a result, according to Corollary \ref{holderw} and Lemma \ref{holderv}, there is $\alpha>0$ independent of $T>0$ such that $w$ and $v$ belong to H\"older spaces $C^\alpha(\bar{\Omega}\times J_T)$ and $C^{\alpha,1+\alpha}(\bar{\Omega}\times J_T)$, respectively. Moreover, their H\"older estimates are independent of $T$ as well. 
	
	Finally, by the results in \cite[Lemma 6.7 and Proposition 6.8]{JLZ22}, there is $C(q)>0$ independent of $T$ and  $T_{\mathrm{max}}$ such that $\sup\limits_{0\leq t<T_{\mathrm{max}}}(\|u(\cdot,t)\|_{q}+\|v(\cdot,t)\|_{W^{1,\infty}})\leq C(q)$ for any $q\in(1,\infty)$. Then it follows by \cite[Lemma A.1]{TaoWin12} that $\sup\limits_{0\leq t<T_{\mathrm{max}}}\|u(\cdot,t)\|_{\infty}<\infty.$   This completes the proof according to Theorem \ref{local}. \qed

}



\section{Uniform-in-time boundedness with decaying motility}	
In this section, we consider problem \eqref{cp} with a decaying motility function that may vanish at infinity. Under the circumstances, uniform boundedness of classical solution is related to the decay rate of $\gamma$ at infinity.
\subsection{The case $N=2$}
In this part, we first consider the two-dimensional case and we  assume that 
$\gamma$ satisfies \eqref{g1}, \eqref{gsup} and \eqref{growth2da} with some 
$\chi>0$. According to Theorem \ref{TH0}, classical solution exists globally. One also observes due to \eqref{g1} and \eqref{gsup} that $\gamma$ has an 
upper bound on 
$[v_*,\infty)$ denoted by $\gamma^*$. Then the proof for Theorem \ref{TH2a} can 
be 
carried out in a similar manner as done in \cite[Section 3.1]{FJ20a}. We report it here for reader's convenience.
{
\begin{lemma}\label{wH1a} Assume \eqref{g1}, \eqref{gsup} and \eqref{growth2da} with some 
	$\chi>0$. Suppose that $\|u_0\|_1<4\pi(1-\tau\gamma_\infty)/\chi$. There is $C(\chi)>0$ depends only on $\Omega,\gamma,\tau$ and the 
	initial data such that
\begin{equation}
	\sup\limits_{t>0}\bigg(\|\nabla w\|^2+\|w\|^2+\int_t^{t+1}\int_\Omega 
	u^2\gamma(v)dxds\bigg)\leq C(\chi).
\end{equation}
\end{lemma}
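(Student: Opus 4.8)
The plan is to derive an energy-type differential inequality for the quantity $\|\nabla w\|_2^2+\|w\|_2^2$ by testing the key parabolic identity \eqref{keyid2} with a suitable power of $u$, and then to close it using a Moser--Trudinger-type inequality in two dimensions. First I would recall that $w=\A^{-1}[u]$, so $u=w-\Delta w$ and $\|\nabla w\|_2^2+\|w\|_2^2=\int_\Omega u w\,dx=\int_\Omega u\,\A^{-1}[u]\,dx$. Differentiating this in time and using the key identity $w_t+u\gamma(v)=\A^{-1}[u\gamma(v)]$ (Lemma~\ref{keylem}) together with $u_t=\Delta(u\gamma(v))$, one computes
\begin{equation*}
\frac12\frac{d}{dt}\left(\|\nabla w\|_2^2+\|w\|_2^2\right)=\int_\Omega u_t\,w\,dx=\int_\Omega \Delta(u\gamma(v))\,w\,dx=\int_\Omega u\gamma(v)\,\Delta w\,dx=\int_\Omega u\gamma(v)(w-u)\,dx,
\end{equation*}
so that
\begin{equation*}
\frac12\frac{d}{dt}\left(\|\nabla w\|_2^2+\|w\|_2^2\right)+\int_\Omega u^2\gamma(v)\,dx=\int_\Omega u\gamma(v)\,w\,dx.
\end{equation*}
The right-hand side is the crucial term to control.

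Next I would estimate $\int_\Omega u\gamma(v)w\,dx$. Using the upper control of $v$ by $w$ from Corollary~\ref{cor1}, namely $v\le Bw$, and the decay assumption \eqref{growth2da} — which gives $\gamma(s)\le C e^{-\chi s}$ for $s$ bounded below, hence $\gamma(v)\le \gamma^* e^{-\chi v}\cdot e^{\chi v_*}\le C e^{-\chi v/?}$ — more carefully, \eqref{growth2da} yields $\liminf e^{\chi s}\gamma(s)>0$ is a lower bound, so for the \emph{upper} estimate I should instead use $\gamma\le\gamma^*$ on $[v_*,\infty)$ combined with the lower control $v\ge c w-C$ (the reverse control implicit in the two-sided control, cf.\ Step~I-(c)); then $\gamma(v)w\le \gamma(v)e^{\chi v/\Lambda}\cdot(\text{something})$. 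The point is that the product $\gamma(v)w$ can be dominated by $C e^{\varepsilon v}$ for $\varepsilon$ as small as we like (since $\gamma$ decays faster than any negative exponential when \eqref{growth2da} holds for all $\chi$, or with the specific $\chi$ otherwise), so
\begin{equation*}
\int_\Omega u\gamma(v)\,w\,dx\le \frac12\int_\Omega u^2\gamma(v)\,dx+\frac12\int_\Omega \gamma(v)\,w^2\,dx\le \frac12\int_\Omega u^2\gamma(v)\,dx+C\int_\Omega e^{\varepsilon w}\,dx.
\end{equation*}
Then by Lemma~\ref{lm2e} (the Brezis--Merle-type inequality) applied to $w=\A^{-1}[u]$ with $\|u\|_1=\|u_0\|_1<4\pi(1-\tau\gamma_\infty)/\chi$, the exponential integral $\int_\Omega e^{Rw}\,dx$ is bounded for $R<4\pi/\|u_0\|_1$; the mass condition is precisely what guarantees the relevant exponent can be chosen admissible. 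This absorbs the bad term.

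Combining, I obtain $\frac{d}{dt}(\|\nabla w\|_2^2+\|w\|_2^2)+\int_\Omega u^2\gamma(v)\,dx\le C$, which after noting $\|\nabla w\|_2^2+\|w\|_2^2\le C(\int_\Omega u^2\gamma(v)\,dx)^{1/2}\cdot(\text{lower order})$ — or more simply, after establishing that $\|\nabla w\|_2^2+\|w\|_2^2$ itself enjoys a dissipative lower bound via interpolation against the $L^1$ mass bound — yields a uniform-in-time bound on $\|\nabla w\|_2^2+\|w\|_2^2$ together with the space-time bound $\int_t^{t+1}\int_\Omega u^2\gamma(v)\,dx\,ds\le C$ by integrating over unit time intervals. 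The main obstacle I anticipate is the careful bookkeeping in relating $\gamma(v)w^2$ to $e^{\varepsilon w}$ while keeping $\varepsilon$ small enough that Lemma~\ref{lm2e} applies with the given mass threshold — this is where the sharp constant $4\pi(1-\tau\gamma_\infty)/\chi$ enters, and where the two-sided control of $v$ by $w$ from Section~3 is indispensable. A secondary technical point is ensuring the differential inequality can be closed into a \emph{uniform} (not merely locally-in-time) bound, which requires a genuine dissipation term on the left; this is handled by the $+\int u^2\gamma(v)$ term together with a Gagliardo--Nirenberg/Poincaré argument controlling $\|w\|_2^2$ from above by $\int u^2\gamma(v)$ plus the conserved mass.
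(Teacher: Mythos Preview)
Your energy identity is correct and coincides with the paper's starting point, but the argument goes off the rails at the estimation of $\int_\Omega u\gamma(v)\,w\,dx$, and the root cause is a misreading of assumption~\eqref{growth2da}. The condition $\liminf_{s\to\infty}e^{\chi s}\gamma(s)>0$ is a \emph{lower} bound on $\gamma$: it says $\gamma(s)\ge c\,e^{-\chi s}$ for large $s$, i.e.\ $\gamma$ decays \emph{no faster} than $e^{-\chi s}$. You state this correctly once, but then write ``$\gamma$ decays faster than any negative exponential when \eqref{growth2da} holds for all $\chi$'' --- that is exactly backwards. Consequently your Young splitting
\[
\int_\Omega u\gamma(v)\,w\,dx \le \tfrac12\int_\Omega u^2\gamma(v)\,dx + \tfrac12\int_\Omega \gamma(v)\,w^2\,dx
\]
produces the wrong remainder: the term $\int_\Omega \gamma(v)w^2\,dx$ is trivially $\le\gamma^*\|w\|_2^2$ and needs neither \eqref{growth2da} nor the mass condition, so the sharp threshold $4\pi(1-\tau\gamma_\infty)/\chi$ never enters your argument. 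The paper instead first adds the identity $\|\nabla w\|_2^2+\|w\|_2^2=\int_\Omega uw\,dx$ to both sides (this is the dissipation trick you are groping for), and only then applies Young in the form
\[
(2\gamma^*+1)\int_\Omega uw\,dx \le \int_\Omega u^2\gamma(v)\,dx + C\int_\Omega \frac{w^2}{\gamma(v)}\,dx.
\]
It is the term $\int_\Omega w^2/\gamma(v)\,dx$ that genuinely requires \eqref{growth2da} (giving $1/\gamma(v)\le Ce^{\chi v}$) together with the \emph{sharp} upper control $v\le\big(\tfrac{1}{1-\tau\gamma_\infty}+\varepsilon\big)(w+C_\infty)$ from Lemma~\ref{comparisonA}, not merely $v\le Bw$; combining these and then invoking Lemma~\ref{lm2e} is precisely where the mass threshold appears.

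Two further concrete problems: (i) you invoke a reverse control $v\ge cw-C$ ``cf.\ Step~I-(c)'', but Lemma~\ref{revlem0} requires an algebraic upper bound $\gamma(s)\le C_l s^{-l}$ with $l>0$, which is not assumed in the two-dimensional setting of this lemma; no reverse control is available or needed here. (ii) Your differential inequality $\frac{d}{dt}Y+\int u^2\gamma(v)\le C$ has no dissipative $+Y$ term, and your proposed remedy via ``Gagliardo--Nirenberg/Poincar\'e controlling $\|w\|_2^2$ by $\int u^2\gamma(v)$'' is not substantiated; the clean route is the paper's: add $Y=\int uw$ to both sides \emph{before} splitting, so that the ODI reads $\frac{d}{dt}Y+Y+\int u^2\gamma(v)\le C(\chi)$ and closes by Gronwall.
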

\begin{proof}
	We multiply the key identity \eqref{keyid} by $u$ and integrate over $\Omega$. Recalling that $w=\mathcal{A}^{-1}[u]$ and that $\mathcal{A}^{-1}$ is self-adjoint, we obtain 
	\begin{align*}
		\frac{1}{2}\frac{d}{dt}\left(\|\nabla w\|_2^2 + \|w\|_2^2\right)+\int_\Omega u^2\gamma(v)\,d x=&\int_\Omega u \mathcal{A}^{-1}[ u\gamma(v)]\,d x\\
		=&\int_\Omega  u\gamma(v) \mathcal{A}^{-1}[u]\,d x\\
		=&\int_\Omega \gamma(v) uw\, d x\,.
	\end{align*}
	Hence, by the upper bound $\gamma^*$ for $\gamma$,
	\begin{equation*}
		\frac{1}{2}\frac{d}{dt}\left(\|\nabla w\|_2^2 + \|w\|_2^2\right)+\int_\Omega u^2\gamma(v)\,d x \le \gamma^*\int_\Omega uw\, d x\,.
	\end{equation*}
	Also, note that
	\begin{equation*}
		\|\nabla w\|_2^2+\|w\|_2^2=\int_\Omega w u\, d x.
	\end{equation*} 
	Combining the above inequalities and using Young's inequality, we arrive at
	\begin{align*}
		\frac{d}{dt}\left(\|\nabla w\|_2^2 + \|w\|_2^2\right) & +\|\nabla w\|_2^2 +  \|w\|_2^2 +2\int_\Omega u^2 \gamma(v)\,d x\\
		=&(2\gamma^*  + 1) \int_\Omega w u \,d x\\
		\leq&\int_\Omega u^2 \gamma(v)\, d x
		+\frac{(2\gamma^*+1)^2}{4}\int_\Omega \frac{w^2}{\gamma(v)}\,d x.
	\end{align*} 
	We thus  obtain
	\begin{equation}\label{wb00}
		\frac{d}{dt}\left(\|\nabla w\|_2^2 +  \|w\|_2^2\right) + \|\nabla w\|_2^2 +  \|w\|_2^2	+\int_\Omega u^2 \gamma(v)\, d x 
		\leq C\int_\Omega \frac{w^2}{\gamma(v)}\, d x\,
	\end{equation} with $C>0$ independent of time.
	
	We now deduce from the assumption~\eqref{growth2da} that there exist $b>0$  and $s_\chi>v_*$ depending on $\chi$ such that  $e^{\chi s} \gamma(s) \ge 1/b$ for all $s\geq s_\chi$. Besides, owing to the continuity of $\gamma$, there holds $\gamma(s)\ge \gamma(s_\chi,v_*)$ for $s\in [v_*,s_\chi]$, we end up with
	\begin{equation}\label{cond_gamma0}
		\frac{1}{\gamma(s)}\leq \max\left\{ b , \frac{1}{\gamma(s_\chi,v_*)} \right\} e^{\chi s}\,, \qquad s\geq v_*\,.
	\end{equation}
	Now, for $\varepsilon>0$, we infer from  \eqref{cond_gamma0},  \eqref{eupctrol} and the elementary inequality $e^{\varepsilon s} \ge \varepsilon^2 s^2$, $s>0$,  that
	\begin{equation}
		\begin{split}
			\int_\Omega \frac{w^2}{\gamma(v)}\, d x & \le C(\chi) \int_\Omega w^2 e^{\chi v}\, d x \le C(\chi) \int_\Omega w^2 e^{ \chi(\frac{1}{1-\tau\gamma_\infty}+\varepsilon) (w+C_\infty)}\, d x \\
			& \le \frac{C(\chi)e^{\chi C_\infty(\frac{1}{1-\tau\gamma_\infty}+\varepsilon)}}{\chi^2 \varepsilon^2} \int_\Omega e^{\chi(\frac{1}{1-\tau\gamma_\infty}+2\varepsilon)w}\, d x \,.
		\end{split} \label{unic0}
	\end{equation}
	Since $\|u\|_1=\|u_0\|_1 < 4\pi(1-\tau\gamma_\infty)/\chi$ by \eqref{e0}, we may choose $\varepsilon_\chi>0$ such that
	\begin{equation}
		\chi (\frac{1}{1-\tau\gamma_\infty}+ 2 \varepsilon_\chi) < \frac{4\pi}{\|u_0\|_1} \qquad \left( \text{ say }\;\;  \varepsilon_\chi \triangleq \frac{\pi}{\chi \|u_0\|_1} - \frac{1}{4(1-\tau\gamma_\infty)} \right) \label{y13}
	\end{equation}
	and deduce from  Lemma~\ref{lm2e} that
	\begin{equation}
		\int_\Omega e^{\chi(\frac{1}{1-\tau\gamma_\infty}+2\varepsilon_\chi)w}\, d x \le C(\|u_0\|_1,\chi(\frac{1}{1-\tau\gamma_\infty}+2\varepsilon_\chi))\,. \label{y12}
	\end{equation}
	Gathering \eqref{wb00}, \eqref{unic0} (with $\varepsilon=\varepsilon_\chi$), and \eqref{y12} gives
	\begin{equation*}
		\frac{d}{dt}\left(\|\nabla w\|_2^2 +  \|w\|_2^2\right) + \|\nabla w\|_2^2 +  \|w\|_2^2	+\int_\Omega u^2 \gamma(v)\, d x 
		\leq C(\chi)\,,
	\end{equation*} 
	from which Lemma~\ref{wH1a} follows after integration with respect to time.
\end{proof}

\begin{proposition} \label{cst10}
Assume \eqref{g1}, \eqref{gsup} and \eqref{growth2da} with some 
$\chi>0$ and that $\|u_0\|_1<4\pi(1-\tau\gamma_\infty)/\chi$.	There is $C(\chi)>0$ such that
	\begin{equation}
		\sup\limits_{t\geq0}\left(\|w(t)\|_{\infty}+\|v(t)\|_{\infty}\right)\leq C(\chi)\,.
	\end{equation}
\end{proposition}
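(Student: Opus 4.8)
The plan is to follow \cite[Section~3.1]{FJ20a}, with the bound $\sup_{s>0}\gamma(s)<1/\tau$ used there relaxed to \eqref{gsup}. Two estimates are already available. From Lemma~\ref{wH1a} we have the uniform bounds $\sup_{t\ge0}\|w(\cdot,t)\|_{H^1(\Omega)}\le C(\chi)$ and $\sup_{t\ge0}\int_t^{t+1}\!\int_\Omega u^2\gamma(v)\,dx\,ds\le C(\chi)$. Moreover, the proof of Lemma~\ref{wH1a} (through Lemma~\ref{lm2e} and \eqref{y12}) also yields the uniform-in-time exponential integrability $\sup_{t\ge 0}\int_\Omega e^{\lambda w(\cdot,t)}\,dx\le C(\chi)$ for some fixed exponent $\lambda>\chi/(1-\tau\gamma_\infty)$; the possibility of taking $\lambda$ \emph{strictly} above $\chi/(1-\tau\gamma_\infty)$ is exactly where the subcritical mass hypothesis $\|u_0\|_1<4\pi(1-\tau\gamma_\infty)/\chi$ is used. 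Combining this with the linear control $v\le(\tfrac1{1-\tau\gamma_\infty}+\varepsilon)(w+C_\infty)$ of Lemma~\ref{comparisonA} and with \eqref{cond_gamma0}, and choosing $\varepsilon$ small, one gets $\sup_{t\ge0}\int_\Omega e^{\kappa v(\cdot,t)}\,dx\le C(\chi)$ and hence uniform-in-time $L^q$-bounds for $\gamma(v(\cdot,t))^{-1}$ for $q$ in a range of exponents above $1$.

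The core of the proof is to promote these integral bounds to a uniform $L^\infty$ bound for $w$. I would do this as in \cite[Section~3.1]{FJ20a}: first, since $N=2$, the duality estimate of Lemma~\ref{wH1a} together with the elliptic regularity of $\mathcal{A}$ and the embedding $W^{2,2}(\Omega)\hookrightarrow L^\infty(\Omega)$ shows that $g:=\mathcal{A}^{-1}[u\gamma(v)]$ (which is $\ge0$ and satisfies $g\le\gamma^*w$ by $\gamma(v)\le\gamma^*$ on $[v_*,\infty)$ and the comparison principle) obeys $\int_t^{t+1}\|g(\cdot,s)\|_{L^\infty(\Omega)}^2\,ds\le C(\chi)$ uniformly in $t$. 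Then, testing the auxiliary equation
\[
  w_t-\gamma(v)\Delta w+\gamma(v)w=g
\]
against powers $w^{p-1}$, $p=p_k=2^k$, one runs a Moser-type iteration. The one term not handled exactly as in Lemma~\ref{Lp} is the gradient contribution $\int_\Omega\gamma(v)\,w^{p-2}|\nabla w|^2$, whose coefficient $\gamma(v)$ may degenerate as $w$ (hence $v$) grows; the remedy is to use $\gamma(v)\ge c\,e^{-\chi v}\ge c'\,e^{-\chi' w}$ with $\chi'$ just above $\chi/(1-\tau\gamma_\infty)<\lambda$ and to trade this degenerate weight, by Hölder's inequality together with the two-dimensional Gagliardo--Nirenberg inequality, against the uniformly integrable quantity $e^{\lambda w}$, so that after absorption one arrives at a recursive family of differential inequalities with constants growing only polynomially in $p_k$. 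Solving them successively, starting from the mass identity $\int_\Omega w\equiv\|u_0\|_1$ as in Lemma~\ref{Lp}, gives $\sup_{t\ge0}\|w(\cdot,t)\|_p\le C(\chi,p)$ with the growth in $p$ tame enough to pass to the limit $p\to\infty$, whence $\sup_{t\ge0}\|w(\cdot,t)\|_\infty\le C(\chi)$.

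Once $\|w(\cdot,t)\|_\infty$ is bounded uniformly in time, Lemma~\ref{comparisonA} gives at once $\|v(\cdot,t)\|_\infty\le(\tfrac1{1-\tau\gamma_\infty}+\varepsilon)(\|w(\cdot,t)\|_\infty+C_\infty)\le C(\chi)$, which is the claim. The genuine obstacle is the degeneracy of the motility at infinity: it is precisely because \eqref{growth2da} combined with the subcritical mass bound makes $e^{\chi w}$ uniformly integrable with a strict margin that the degenerate gradient term can be absorbed in the iteration. In particular, when \eqref{growth2da} holds for \emph{every} $\chi>0$, the same scheme applies for arbitrary initial mass, since $\chi$ may then be chosen as small as the argument demands.
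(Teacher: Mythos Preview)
Your preliminary ingredients are all correct and match the paper's: the $H^1$ bound on $w$, the bound on $\int_t^{t+1}\!\int_\Omega u^2\gamma(v)$, the uniform exponential integrability $\int_\Omega e^{\lambda w}\le C(\chi)$ with $\lambda>\chi/(1-\tau\gamma_\infty)$, and (equivalently) your bound $\int_t^{t+1}\|g\|_\infty^2\le C(\chi)$ via $\|g\|_\infty\le C\|u\gamma(v)\|_2\le C(\gamma^*\!\int u^2\gamma(v))^{1/2}$. Where you diverge from the paper is in the final step, and there the paper's argument is both simpler and avoids a difficulty that your sketch does not resolve. The paper observes directly that, for $p\in(1,2)$ and $N=2$,
\[
\|w\|_\infty\le C\|w\|_{W^{2,p}}\le C\|u\|_p\le C\Big(\int_\Omega u^2\gamma(v)\Big)^{1/2}\Big(\int_\Omega \gamma(v)^{-\frac{p}{2-p}}\Big)^{\frac{2-p}{2p}},
\]
and for $p$ close enough to $1$ the second factor is bounded uniformly in time by \eqref{cond_gamma0}, \eqref{eupctrol} and \eqref{y12}. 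Hence $\int_t^{t+1}\|w(\cdot,s)\|_\infty\,ds\le C(\chi)$ by Lemma~\ref{wH1a}. The time-averaged bound is then upgraded to a pointwise one via the crude inequality $\partial_t w\le\gamma^* w$ (from \eqref{keyid} and $\A^{-1}[u\gamma(v)]\le\gamma^* w$), which gives $\|w(t+1)\|_\infty e^{\gamma^*(s-t-1)}\le\|w(s)\|_\infty$; integrating in $s$ over $[t,t+1]$ yields $\|w(t+1)\|_\infty\le C(\chi)$. No iteration is needed.

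Your Moser iteration, by contrast, has a genuine gap at the point you flag yourself. Testing $w_t-\gamma(v)\Delta w+\gamma(v)w=g$ against $w^{p-1}$ yields on the left only the \emph{weighted} gradient term $\int_\Omega e^{-\chi' w}|\nabla w^{p/2}|^2$, and it is not clear how ``trading the weight against $e^{\lambda w}$ via H\"older and Gagliardo--Nirenberg'' produces a closed recursion with polynomially growing constants. To recover the unweighted $\int_\Omega|\nabla w^{p/2}|^2$ needed for Gagliardo--Nirenberg one would have to control $\|e^{\chi' w}\|_\infty$, which is the conclusion; conversely, the natural substitution $\Phi_p(w)=\int_0^w e^{-\chi' s/2}s^{(p-2)/2}\,ds$ is \emph{bounded} as $w\to\infty$, so $\int_\Omega|\nabla\Phi_p(w)|^2$ carries no information about large $w$. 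Since you already have $\int_t^{t+1}\|g\|_\infty^2\le C(\chi)$ and the pointwise inequality $\partial_t w\le g\le\gamma^* w$, the paper's two-line endgame is available to you and is the clean way to close the argument.
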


\begin{proof} Let $p\in (1,2)$ and $\varepsilon>0$ to be specified later. Since $u= \mathcal{A}[w]$, we infer from the Sobolev embedding theorem, H\"{o}lder's inequality,  \eqref{cond_gamma0} and \eqref{eupctrol} that
	\begin{align}\label{winfty}
		\|w\|_{\infty} \leq &C(p) \|w\|_{W^{2,p}} \le C(p)\|u\|_{p} \nonumber\\
		\leq & C(p)\left(\int_\Omega u^2\gamma(v)\,d x\right)^{\frac{1}{2}} \left(\int_\Omega (\gamma(v))^{-\frac{p}{2-p}}\,d x\right)^{\frac{2-p}{2p}}\nonumber\\
		\leq&C(p,\chi) \left(\int_\Omega u^2\gamma(v)\,d x\right)^{\frac{1}{2}} \left(\int_\Omega e^{\frac{\chi p}{2-p}v}\,d x\right)^{\frac{2-p}{2p}} \nonumber\\
		\leq& C(p,\chi,\varepsilon) \left(\int_\Omega u^2\gamma(v)\,d x\right)^{\frac{1}{2}} \left(\int_\Omega e^{\frac{\chi p}{2-p}(\frac{1}{1-\tau\gamma_\infty}+\varepsilon)w}\,d x\right)^{\frac{2-p}{2p}}\,.
	\end{align}
	Choosing $p=p_\chi \triangleq \frac{2+4\varepsilon_\chi(1-\tau\gamma_\infty)}{2+3\varepsilon_\chi(1-\tau\gamma_\infty)}\in (1,2)$ with $\varepsilon_\chi$ defined in \eqref{y13}, we observe that
	\begin{equation*}
		\frac{\chi p_\chi}{(2-p_\chi)} (\frac{1}{1-\tau\gamma_\infty}+\varepsilon_\chi) = \chi (\frac{1}{1-\tau\gamma_\infty}+2\varepsilon_\chi)
	\end{equation*}
	and we deduce from \eqref{y12} and \eqref{winfty} (with $\varepsilon=\varepsilon_\chi$ and $p=p_\chi$) that
	\begin{equation*}
		\|w\|_{\infty}\leq C(\chi) \left(\int_\Omega u^2\gamma(v)\,d x\right)^{\frac{1}{2}}.
	\end{equation*}	
	Combining the above estimate with Lemma~\ref{wH1a}, we obtain, for $t\geq 0$,
	\begin{equation}
		\int_t^{t+1}\|w(\cdot,s)\|_{\infty}\,d s\leq \left( \int_t^{t+1} \|w(\cdot,s)\|_\infty^2\,d s \right)^{\frac{1}{2}} \le C(\chi) \left( \int_t^{t+1}\int_\Omega u^2\gamma(v)\,d x d s \right)^{\frac{1}{2}}\leq C(\chi)\,. \label{y14}
	\end{equation}
	Now, observing that the key identity \eqref{keyid}, the non-negativity of $u$ and $\gamma$, and the elliptic comparison principle imply that
	\begin{equation*}
		\partial_t w \le \partial_t w + u\gamma(v) =  \mathcal{A}^{-1}\left[  u\gamma(v)  \right] \le \gamma^* \mathcal{A}^{-1}[u] = \gamma^*  w\,,
	\end{equation*}
	we realize that $w(x,t+1) \le e^{\gamma^* (t+1-s)} w(x,s)$ for all $(x,s)\in\Omega\times(t,t+1)$. Consequently,
	\begin{equation*}
		\|w(\cdot,t+1)\|_\infty e^{\gamma^* (s-t-1)} \le \|w(\cdot,s)\|_\infty\,, \qquad s\in [t,t+1]\,,
	\end{equation*}
	and it follows from \eqref{y14} and the above inequality after integration with respect to $s$ over $[t,t+1]$ that 
	\begin{equation*}
		\|w(\cdot,t+1)\|_\infty \frac{1 - e^{- \gamma^* }}{\gamma^*} \le C(\chi)\,.
	\end{equation*}
	Since we already know that $\|w(\cdot,t)\|_\infty \le C$ for $t\in [0,1]$ by Lemma~\ref{keylem}, we have thus proved that
	\begin{equation*}
		\|w(\cdot,t)\|_\infty \leq C(\chi) \qquad\text{for}\;\;t\geq 0\,.
	\end{equation*} 
	The claimed boundedness of $v$ now readily follows in view of \eqref{upcontrol} and the non-negativity of $v$.
\end{proof}

\begin{proof}[Proof of Theorem~\ref{TH2a}]
	With Proposition~\ref{cst10} at hand, $\gamma$ has  strictly positive time-independent upper and lower bounds on $\bar{\Omega}\times[0,\infty)$,  the result then directly follows from Theorem~\ref{TH1}. 
\end{proof}

}

\subsection{The case $N\geq3$}
In this part, we assume that $N\geq3$ and $\gamma$  satisfies \eqref{g1}, 
\eqref{gsup}, \eqref{gamma2} and \eqref{gamm3}.
We show that under the above assumptions, the technical condition \eqref{A5} and the non-increasing property of $\gamma$  in \cite[Theorem1.4]{JLZ22} are both removable.

Note that if $\gamma$ satisfies \eqref{gamma2} with $k= l=0$, there are time-independent positive constants $\gamma_*$ and $\gamma^*$ such that $\gamma_*\leq \gamma(s)\leq \gamma^*$ for $s\in[v_*,\infty)$ according to Lemma 
\ref{gamma2b}, then the assertion of Theorem \ref{TH2} follows by Theorem 
\ref{TH1} in this special case. In the sequel, we aim to provide a unified approach  to prove Theorem \ref{TH2} for all cases under the assumption \eqref{gamm3}.

To begin with, we need to establish the following reverse control.
\begin{lemma}\label{revlem0}
	Suppose that $\gamma$ satisfies \eqref{g1} and there is $C_l>0$ such that $\gamma(s)\leq C_l s^{-l}$ for $s\geq v_*$ with   some $l>0$. Then there is a positive time-independent constant $A>0$ such that
	\begin{equation}\label{rev}
		Aw(x,t)\leq v(x,t),\qquad \text{for }\;(x,t)\in\Omega\times[0,T_{\mathrm{max}}).
	\end{equation}
\end{lemma}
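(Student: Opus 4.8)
\textbf{Proof proposal for Lemma~\ref{revlem0}.}
The plan is to mimic the strategy behind the upper control in Lemma~\ref{comparisonA}, but now aiming for an inequality of the opposite type, namely $h:=w-\beta v\leq 0$ for a suitably small constant $\beta>0$. The idea is that since $w$ satisfies the parabolic equation obtained from the key identity, and $v$ satisfies \eqref{cp2}, the difference $h$ solves a linear parabolic differential inequality with a zeroth-order coefficient that can be controlled, provided we exploit the sign of the nonlocal term $\A^{-1}[u\gamma(v)]\ge 0$ and the already-available \emph{upper} control $v\leq Bw$ from Corollary~\ref{cor1}. First I would write, using $u=\A w=w-\Delta w$ and $\mathcal L[z]=\tau z_t-\Delta z+z$,
\begin{equation*}
\mathcal L[w]=\mathcal L[w]+\tau\partial_t w-\tau\partial_t w=\A w+\tau\partial_t w-\tau\partial_t w=u+\tau\partial_t w,
\end{equation*}
and then substitute $\partial_t w=\A^{-1}[u\gamma(v)]-u\gamma(v)$ from \eqref{keyid} to get $\mathcal L[w]=u+\tau\A^{-1}[u\gamma(v)]-\tau u\gamma(v)$. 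Comparing with $\mathcal L[\beta v]=\beta u$, this yields
\begin{equation*}
\mathcal L[w-\beta v]=(1-\beta)u-\tau u\gamma(v)+\tau\A^{-1}[u\gamma(v)].
\end{equation*}

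The next step is to choose $\beta$ small enough and bound the right-hand side from above by a multiple of $h=w-\beta v$ (plus controllable terms), so that the comparison principle of Theorem~2.2 in \cite{LSU} applies. The key observation is that $\A^{-1}[u\gamma(v)]$, being $\A^{-1}$ applied to something pointwise bounded by $\gamma^* u$ near infinity, will have an $L^\infty$ bound once $w$ is controlled — but since we are here in the time-dependent setting we only have \eqref{wb0} at our disposal, giving a possibly time-dependent bound; hence the constant $A$ must be allowed to depend on this. More carefully, because $\gamma(s)\le C_l s^{-l}$ for $s\ge v_*$ and $v\ge v_*$, we have $u\gamma(v)\le C_l v_*^{-l}u$, so $\A^{-1}[u\gamma(v)]\le C_l v_*^{-l}w$; using instead $\gamma(v)\ge 0$ for the $-\tau u\gamma(v)$ term. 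Thus
\begin{equation*}
\mathcal L[w-\beta v]\le (1-\beta)u+\tau C_l v_*^{-l}w.
\end{equation*}
Now I would use the upper control $v\le Bw$ from Corollary~\ref{cor1}, i.e. $u=\A v\le$ (something comparable), together with $w\le \beta v+h$, to re-express the right side in terms of $h$ and of $v$; the point of taking $\beta$ small is to absorb the leftover $v$-contribution into $\mathcal L[\beta v]$, leaving a differential inequality of the form $\tau\partial_t h-\Delta h+(1-g)h\le 0$ with a bounded continuous coefficient $g$, exactly as in the proof of Lemma~\ref{comparisonA}.

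The initial datum condition is clean: $h(x,0)=w_0-\beta v_0$, which is $\le 0$ on $\bar\Omega$ once $\beta$ is chosen with $\beta\le \min_{\bar\Omega}(w_0/v_0)$, which is positive and finite since $w_0$ has a positive lower bound (comparison for the elliptic problem $\A w_0=u_0$, $u_0\not\equiv 0$) and $v_0\in W^{1,\infty}$ is bounded. Combined with the Neumann boundary condition $\nabla h\cdot\nu=0$ and the differential inequality, the maximum principle gives $h\le 0$, i.e. $w\le\beta^{-1}v$ on $\bar\Omega\times[0,T]$ for every $T<T_{\mathrm{max}}$, hence on all of $\Omega\times[0,T_{\mathrm{max}})$, with $A:=\beta$. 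I expect the main obstacle to be the bookkeeping in choosing $\beta$ and the coefficient $g$ so that $g$ is genuinely bounded and continuous uniformly up to the choice made — in particular making sure the manipulation that trades $v$-terms for $\mathcal L[\beta v]$ is consistent (one must keep track of whether the residual terms have the right sign), and making precise what "time-independent" means here given that \eqref{wb0} is only a time-dependent bound; the resolution is that the structure $\A^{-1}[u\gamma(v)]\le C_l v_*^{-l}w$ is in fact time-independent in its dependence on $w$, so the constant $A$ genuinely does not degrade in time, which is exactly what is needed downstream for the Moser--Alikakos iteration in Step~II-(b).
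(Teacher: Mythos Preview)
Your approach has a genuine gap at the step where you write ``re-express the right side in terms of $h$ and of $v$; the point of taking $\beta$ small is to absorb the leftover $v$-contribution into $\mathcal L[\beta v]$.'' After your (correct) computation
\[
\mathcal L[w-\beta v]=(1-\beta)u-\tau u\gamma(v)+\tau\A^{-1}[u\gamma(v)]
\]
and the crude bounds $-\tau u\gamma(v)\le 0$, $\A^{-1}[u\gamma(v)]\le C_l v_*^{-l}w$, you are left with $(1-\beta)u+\tau C_l v_*^{-l}w$. The term $(1-\beta)u$ is \emph{not} of the form $g\cdot h$ with a bounded zeroth-order coefficient: $u=w-\Delta w$ contains second derivatives of $w$. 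The only way to get rid of it through the equations is to use $u=\mathcal L[v]$ and move $(1-\beta)\mathcal L[v]$ to the left, but then the left side becomes $\mathcal L[w-v]$ \emph{independently of $\beta$}, and you are left with
\[
\mathcal L[w-v]\le \tau C_l v_*^{-l}\,w=\tau C_l v_*^{-l}(w-v)+\tau C_l v_*^{-l}\,v,
\]
a linear parabolic inequality with a strictly positive forcing $\tau C_l v_*^{-l}v$. Comparison then only gives $w-v$ bounded by a Duhamel integral of $v$, not $w\le Cv$. No choice of $\beta$ rescues this: your estimate used the hypothesis $\gamma(s)\le C_l s^{-l}$ only through the constant $C_l v_*^{-l}$, i.e.\ exactly as if $\gamma$ were merely bounded, and with a merely bounded $\gamma$ the reverse control is false in general. (Incidentally, your initial-data condition is also reversed: $w_0-\beta v_0\le 0$ needs $\beta\ge\sup_{\bar\Omega}(w_0/v_0)$, not $\beta\le\min$.)

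The paper's argument uses the decay in an essential way. Writing $u=\mathcal L[v]$ and introducing $\Gamma(s)=\int_1^s\eta^{-l}\,d\eta$, one has the pointwise identity $uv^{-l}=\mathcal L[\Gamma(v)]-l v^{-l-1}|\nabla v|^2+(v^{1-l}-\Gamma(v))$, hence $u\gamma(v)\le C_l uv^{-l}\le C_l\,\mathcal L[\Gamma(v)+C']$ with a genuine constant $C'$. This upgrades the nonlocal bound to $\A^{-1}[u\gamma(v)]\le C_l\,\mathcal L[\A^{-1}[\Gamma(v)+C']]$, and parabolic comparison yields
\[
w\le v+C_l\tau\,\A^{-1}[\Gamma(v)]+C_0.
\]
Because $l>0$, the function $\Gamma$ is \emph{sublinear} ($\Gamma(s)\sim s^{1-l}$ for $l\ne 1$, $\log s$ for $l=1$); combined with the already-available upper control $v\le Bw$ and a second elliptic comparison, $\A^{-1}[\Gamma(v)]$ is then bounded by $C\Gamma(w)+C$, which is $o(w)$ and can be absorbed into the left side by Young's inequality. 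It is precisely this sublinearity---lost when you replace $v^{-l}$ by the constant $v_*^{-l}$---that closes the argument.
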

\begin{proof}
	We recall that  by the key identity \eqref{keyid}
	\begin{align}\label{rev000}
		\L[v]=u=&w-\Delta w=\L[w]-\tau w_t\non\\
		=&\L[w]+\tau u\gamma(v)-\tau \A^{-1}[u\gamma(v)].
	\end{align}
	Since $\gamma(s)\leq C_ls^{-l}$ for $s\geq v_*$ with some $l>0$, we infer that
	\begin{align}\label{comrev0}
		u\gamma(v)\leq& C_lu v^{-l}\nonumber\\
		=&C_l(\tau v_t-\Delta v+v)v^{-l}\nonumber\\
		=&C_l\bigg(\tau \partial_t\Gamma(v)-\Delta \Gamma(v)+\Gamma(v)-lv^{-l-1}|\nabla v|^2+v^{1-l}-\Gamma(v)\bigg)\nonumber\\
		=&C_l\bigg(\mathcal{L}[\Gamma(v)]-lv^{-l-1}|\nabla v|^2+v^{1-l}-\Gamma(v)\bigg),
	\end{align}where 
	\begin{equation}\label{defGamma}
		\Gamma(s)\triangleq\int_1^s \eta^{-l}d\eta=\begin{cases}
			\frac{s^{1-l}-1}{1-l},\qquad&\text{if}\;l\neq1,\\
			\log s,\qquad& \text{if}\;l=1.
		\end{cases}
	\end{equation}Denote $s_+=\max\{s,0\}$.
	We observe that for any $s\geq a>0$ ,
	\begin{equation}\label{Gap1}
		s^{1-l}-\Gamma(s)=\begin{cases}\frac{1-ls^{1-l}}{1-l}\qquad&\text{if}\;l\neq1,\\
			1-\log s\leq (1-\log a)_+\qquad& \text{if}\;l=1.
		\end{cases}
	\end{equation}Let
	\begin{equation}\label{Gap2}
		C(a,l):=\begin{cases}
			\frac{la^{1-l}}{l-1}\qquad&\text{if}\;l>1,\\
			\frac{1}{1-l}\qquad&\text{if}\;l<1,\\
			(1-\log a)_+\qquad& \text{if}\;l=1.
		\end{cases}
	\end{equation} 
Then, there is $C'=C(v_*,l)\geq0$ such that
\begin{equation}\label{Gap3}
	v^{1-l}-\Gamma(v)\leq C' \;\;\text{for} \quad(x,t)\in\Omega\times[0,T_{\mathrm{max}}),
\end{equation}
which together with the non-negativity of  $lv^{-l-1}|\nabla v|^2\geq0$, and \eqref{comrev0} implies that 
	\begin{align}
		u\gamma(v)\leq C_l\L[\Gamma(v)+C']
	\end{align}and hence by comparison principle that
	\begin{equation*}
		\A^{-1}[u\gamma(v)]\leq 
		C_l\A^{-1}[\L[\Gamma(v)+C']]=C_l\L[\A^{-1}[\Gamma(v)+C']].
	\end{equation*}
	It follows from \eqref{rev000}, the non-negativity of $u\gamma(v)$ and the above that
	\begin{align}
		\L[v]\geq \L[w]-C_l\tau\L[\A^{-1}[\Gamma(v)+C']],
	\end{align}which according to comparison principle of heat equations, gives rise to
	\begin{equation}\label{rev001}
		w\leq 
		v+C_l\tau \A^{-1}[\Gamma(v)]+C_0,\qquad\text{for}\;(x,t)\in\Omega\times[0,T_{\mathrm{max}})
	\end{equation}with a generic positive constant $C_0\geq C'$ ensuring that 
	\begin{equation*}
		w_0\leq v_0+C_l\tau\A^{-1}[\Gamma(v_0)]+C_0\qquad\text{in}\;\bar{\Omega}.
	\end{equation*}
	
	Next,  noticing that assumption  \eqref{gsup} is satisfied since $\gamma(s)\leq C_l s^{-l}$ with $l>0$, by Lemma \ref{keylem},  $v\leq Bw$ 
	for $(x,t)\in\Omega\times[0,T_{\mathrm{max}})$.	Since $\Gamma(s)$ is monotone increasing, we infer  that
	\begin{equation*}
		\Gamma(v)\leq \Gamma(Bw)=\begin{cases}
			\frac{B^{1-l}w^{1-l}-1}{1-l}=B^{1-l}\Gamma(w)+\frac{B^{1-l}-1}{1-l}\qquad&\text{if}\;l\neq1,\\
			\log B+\log w\qquad&\text{if}\;l=1.
		\end{cases}
	\end{equation*}
	We note that for $l>1$, there holds $\Gamma(Bw)\leq 
	\frac{1}{l-1}$. Then it 
	follows from \eqref{rev001} and comparison principle of elliptic equations that 
	when $l>1$,
	\begin{equation*}
		w\leq v+C_l\tau\A^{-1}[\Gamma(Bw)]+C_0\leq v+C_0'
	\end{equation*}with $C_0'=\frac{C_l\tau}{l-1}+C_0$.
	
	It remains to consider the case $0<l\leq 1$. 
	Recall that $w-\Delta w=u$ and observe that 
	\begin{align}\label{rev002}
		u w^{-l}=&\Gamma(w)-\Delta \Gamma(w)-lw^{-l-1}|\nabla w|^2+(w^{1-l}-\Gamma(w))\nonumber\\
		=&\A[\Gamma(w)]-lw^{-l-1}|\nabla w|^2+(w^{1-l}-\Gamma(w)).
	\end{align}
	When $l\in(0,1)$, there holds that
	\begin{equation*}
		w^{1-l}-\Gamma(w)=\frac{1-lw^{1-l}}{1-l}=1-l\Gamma(w).
	\end{equation*}
	Therefore, we infer from the above identity, \eqref{rev002}, the non-negativity of $uw^{-l}$ and 
	$lw^{-l-1}|\nabla w|^2$ that
	\begin{equation}
		l\Gamma(w)\leq \A[\Gamma(w)]+1.
	\end{equation}
	Hence, it follows by comparison principle of elliptic 
	equations that
	\begin{align}\label{rev003}
		\A^{-1}[\Gamma(w)]\leq \frac{1}{l}(\Gamma(w)+1)
	\end{align}
	together with \eqref{rev001} and Young's inequality yielding that
	\begin{align*}
		w\leq&  v+C_l\tau\A^{-1}[\Gamma(Bw)]+C_0\\
		=& v+C_l\tau\A^{-1}[B^{1-l}\Gamma(w)+\frac{B^{1-l}-1}{1-l}]+C_0\\
		\leq& 
		v+\frac{C_l\tau B^{1-l}}{l}\Gamma(w)+\frac{C_l\tau B^{1-l}}{l}+\frac{C_l\tau B^{1-l}}{1-l}+C_0\\
		= &v+\frac{C_l\tau B^{1-l}w^{1-l}}{l(1-l)}+C_0\\
		\leq &v+\frac12 w+C_0'.
	\end{align*}
	Thus, for $l\in(0,1)$ we deduce   that
	\begin{equation*}
		w\leq 2(v+C_0').
	\end{equation*}
	Last, when $l=1$,  we recall that 
	$\Gamma(w)=\log w$ and $w^{1-l}-\Gamma(w)=1-\log w$. We further infer 
	from 
	\eqref{rev002} that
	\begin{equation*}
		\Gamma(w)\leq \A[\Gamma(w)]+1.
	\end{equation*}
	Thus,
	\begin{equation*}
		\A^{-1}[\Gamma(w)]\leq\Gamma(w)+1
	\end{equation*}due to an application of the comparison principle. Finally, we 
	deduce from \eqref{rev001} that
	\begin{align}
		w\leq&v+C_l\tau\A^{-1}[\Gamma(Bw)]+C_0\nonumber\\
		\leq &v+C_l\tau\A^{-1}[\log B+\Gamma(w)]+C_0\nonumber\\
		\leq & v+C_l\tau\log B+C_l\tau\Gamma(w)+C_l\tau+C_0\nonumber\\
		=& v+C_l\tau\log B+C_l\tau\log w+C_l\tau+C_0.
	\end{align}
	Since $\log w\leq w^{\delta}$ for any $\delta>0$, we infer by Young's 
	inequality again that
	\begin{equation*}
		\frac12 w\leq v+C_0''.
	\end{equation*}
	In summary, we establish that
	\begin{equation*}
		w\leq C(v+1)=C(v+v/v_*)=C(1+1/v_*)v:=\frac{1}{A}v.
	\end{equation*}
	This completes the proof.
\end{proof}
In view of Lemma \ref{keylem} and Lemma \ref{revlem0}, we have the following two-sided control.
\begin{corollary}\label{revlem}
	Suppose that $\gamma$ satisfies \eqref{gamma2} with some $k\geq 
	l>0$.Then there are positive time-independent constants $A,B>0$ such that
	\begin{equation}\label{dualcontrol}
		Aw(x,t)\leq v(x,t)\leq Bw(x,t),\qquad \text{for }\;(x,t)\in\Omega\times[0,T_{\mathrm{max}}).
	\end{equation}
\end{corollary}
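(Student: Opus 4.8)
The plan is to read off the two-sided bound \eqref{dualcontrol} by pairing the upper control $v\le Bw$ already obtained in Section~3 with the reverse control $Aw\le v$ of Lemma~\ref{revlem0}. Since both of those are taken as given, the only genuine task is to verify that their hypotheses are met under assumption \eqref{gamma2} with exponents $k\ge l>0$.

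First I would invoke Lemma~\ref{gamma2b}: because $\gamma$ satisfies \eqref{g1} and \eqref{gamma2}, applied with $a=v_*$ it furnishes constants $C_1,C_2>0$ (depending only on $v_*$ and $\gamma$) with $C_1 s^{-k}\le\gamma(s)\le C_2 s^{-l}$ for all $s\ge v_*$. Two consequences are then immediate. On the one hand, since $l>0$ the upper bound forces $\gamma(s)\to 0$ as $s\to\infty$, so $\gamma_\infty=\limsup_{s\to\infty}\gamma(s)=0<1/\tau$ and assumption \eqref{gsup} holds; combined with the standing hypotheses \eqref{g1} and \eqref{ini} and $N\ge 3\ge 1$, this places us under the hypotheses of Theorem~\ref{TH0}, and Corollary~\ref{cor1} (itself a consequence of the comparison argument in Lemma~\ref{comparisonA}) then yields a time-independent $B>0$ with $v(x,t)\le Bw(x,t)$ on $\Omega\times[0,T_{\mathrm{max}})$. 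On the other hand, the inequality $\gamma(s)\le C_2 s^{-l}$ for $s\ge v_*$ is exactly the hypothesis of Lemma~\ref{revlem0} (take $C_l=C_2$ and the same $l$), which therefore produces a time-independent $A>0$ with $Aw(x,t)\le v(x,t)$ on $\Omega\times[0,T_{\mathrm{max}})$. Here the restriction of the pointwise bounds to $[v_*,\infty)$ is harmless because $v(x,t)\ge v_*$ throughout, by \eqref{e00}.

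Chaining the two inequalities gives $Aw\le v\le Bw$ on $\Omega\times[0,T_{\mathrm{max}})$, with $A$ and $B$ depending only on $\Omega$, $\tau$, $\gamma$ and the initial data, hence independent of $t$ and of $T_{\mathrm{max}}$, which is precisely \eqref{dualcontrol}. There is no real obstacle at this stage: the entire difficulty is front-loaded into Lemma~\ref{revlem0} (the delicate reverse comparison, which splits into the cases $l>1$, $l=1$ and $0<l<1$) and into Lemma~\ref{comparisonA}. The only point in the present argument that deserves a moment of attention is the translation of the single algebraic-decay hypothesis \eqref{gamma2} into the two facts actually used — namely \eqref{gsup} for the upper control and the explicit bound $\gamma(s)\le C_2 s^{-l}$ on $[v_*,\infty)$ for the reverse control — and both follow at once from Lemma~\ref{gamma2b}.
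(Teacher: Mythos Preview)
Your proposal is correct and follows essentially the same route as the paper, which simply states that the corollary holds ``in view of Lemma~\ref{keylem} and Lemma~\ref{revlem0}'' without further detail. If anything, you are more careful than the paper: you explicitly check that \eqref{gamma2} with $l>0$ forces \eqref{gsup} (via Lemma~\ref{gamma2b}) so that Corollary~\ref{cor1} supplies the upper control $v\le Bw$, and you correctly identify Lemma~\ref{revlem0} as the source of the reverse inequality; the paper's citation of Lemma~\ref{keylem} for the upper control is really shorthand for the chain through Lemma~\ref{comparisonA} and Corollary~\ref{cor1}, which you spell out.
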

Now, we prove the following key inequality for $w$ when $\gamma$ satisfies \eqref{gamma2}.
\begin{lemma}\label{lemineq}
	Suppose that $\gamma$ satisfies \eqref{g1}, \eqref{gsup} and \eqref{gamma2}, there exist time-independent positive constants $C_1, C_2$ and $C_3$, such that for $(x,t)\in\Omega\times[0,T_{\mathrm{max}})$
	\begin{equation}\label{keyineq}
		w_t+C_1B^{-k}w^{-k}u\leq C_2A^{-l}(\Gamma(w)+C_3)
	\end{equation}where $\Gamma(\cdot)$ is defined by \eqref{defGamma}.
\end{lemma}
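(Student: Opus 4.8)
The plan is to derive \eqref{keyineq} by combining the two-sided control of Corollary~\ref{revlem} with the algebraic decay bound $\gamma(s)\le C_l s^{-l}$ on $[v_*,\infty)$ from Lemma~\ref{gamma2b}, applied to the key identity \eqref{keyid}. First I would write the key identity as $w_t+u\gamma(v)=\mathcal{A}^{-1}[u\gamma(v)]$, so the problem reduces to (i) bounding $u\gamma(v)$ from below by a constant times $w^{-k}u$, and (ii) bounding $\mathcal{A}^{-1}[u\gamma(v)]$ from above by $C(\Gamma(w)+C_3)$. For (i), since $\gamma(s)\ge C_1 s^{-k}$ for $s\ge v_*$ and $v\le Bw$, we get $\gamma(v)\ge C_1(Bw)^{-k}=C_1B^{-k}w^{-k}$, hence $u\gamma(v)\ge C_1B^{-k}w^{-k}u$, which gives the second term on the left-hand side of \eqref{keyineq}.

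For (ii), the strategy is exactly the one already used in the proof of Lemma~\ref{revlem0}: using $\gamma(s)\le C_l s^{-l}$ and $v\ge Aw$ (so $v^{-l}\le A^{-l}w^{-l}$), together with the computation
\begin{equation*}
	u\gamma(v)\le C_l u v^{-l} \le C_l A^{-l} u w^{-l}
	= C_l A^{-l}\Big(\mathcal{L}\text{- or }\mathcal{A}\text{-type expression in }\Gamma(w)\Big) - (\text{nonnegative gradient term}) + (\text{bounded remainder}).
\end{equation*}
More precisely, I would write $uw^{-l}=(w-\Delta w)w^{-l}=\mathcal{A}[\Gamma(w)]-lw^{-l-1}|\nabla w|^2+(w^{1-l}-\Gamma(w))$, where $\Gamma$ is as in \eqref{defGamma}. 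Dropping the nonnegative term $lw^{-l-1}|\nabla w|^2$ and absorbing the bounded term $w^{1-l}-\Gamma(w)$ (which is $\le C'$ by the same argument giving \eqref{Gap3}, using the uniform lower bound $w\ge w_*$ from Lemma~\ref{keylem}) into a constant, yields $u\gamma(v)\le C_lA^{-l}\mathcal{A}[\Gamma(w)+C']$ for a suitable $C'\ge 0$. Then, since $\mathcal{A}^{-1}$ preserves the order (comparison principle for elliptic equations) and $u\gamma(v)\ge 0$, we obtain
\begin{equation*}
	\mathcal{A}^{-1}[u\gamma(v)]\le C_lA^{-l}\,\mathcal{A}^{-1}\big[\mathcal{A}[\Gamma(w)+C']\big]=C_lA^{-l}(\Gamma(w)+C').
\end{equation*}
Substituting both bounds back into the key identity gives $w_t+C_1B^{-k}w^{-k}u\le C_2A^{-l}(\Gamma(w)+C_3)$ with $C_2=C_l$ and $C_3=C'$, as claimed.

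The only genuinely delicate point is the case $l=0$, which corresponds to $\gamma$ not decaying (only being bounded) near infinity; here $\Gamma(w)=\log w$ if we set $l=1$ in the formula, but with $l=0$ one has $w^{-l}=1$ and the identity reads $u=\mathcal{A}[\Gamma(w)]+\text{lower order}$ with $\Gamma(w)=w-1$, which is consistent but requires using assumption \eqref{gsup} to still get $\gamma(v)\le\gamma^*$ and hence an upper bound $\mathcal{A}^{-1}[u\gamma(v)]\le\gamma^* w$ directly from Lemma~\ref{keylem}; this is where \eqref{gsup} is "explicitly needed" as flagged in the remark after Theorem~\ref{TH2}. I would therefore treat $l=0$ separately (or note it reduces to the setting of Lemma~\ref{keylem} and Lemma~\ref{Lp}-type bounds), and present the computation above for $l>0$ where $\Gamma$ is genuinely sublinear. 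The main obstacle is thus bookkeeping the constants and the borderline algebraic identities for $\Gamma(w)$ uniformly in the ranges $l\in(0,1)$, $l=1$, $l>1$, but each of these was already handled in Lemma~\ref{revlem0}, so no new idea is required — only a careful transcription with $w^{-l}$ in place of $v^{-l}$.
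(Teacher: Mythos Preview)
Your proposal is correct and follows essentially the same approach as the paper: split the key identity into a lower bound on $u\gamma(v)$ via $\gamma(s)\ge C_1 s^{-k}$ and $v\le Bw$, and an upper bound on $\mathcal{A}^{-1}[u\gamma(v)]$ via $\gamma(s)\le C_l s^{-l}$, the reverse control $v\ge Aw$, and the identity $uw^{-l}=\mathcal{A}[\Gamma(w)]-lw^{-l-1}|\nabla w|^2+(w^{1-l}-\Gamma(w))$, with the $l=0$ case handled separately. One small expository slip: when $l=0$, assumption~\eqref{gsup} is needed not to bound $\gamma(v)$ from above (that already follows from \eqref{gamma2} with $l=0$), but to obtain the upper control $v\le Bw$ from Corollary~\ref{cor1}, which you use in step~(i) for the lower bound $\gamma(v)\ge C_1 B^{-k}w^{-k}$.
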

\begin{proof} 
Firstly, we consider the case $k=l=0$. According to Lemma \ref{gamma2b}, there are two time-independent positive constants $C_1, C_2$ such that
\begin{equation*}
	0<C_1\leq \gamma(s)\leq C_2\qquad\text{for}\;s\in[v_*,\infty).
\end{equation*} It follows from the key identity, the non-negativity of $u$, and the comparison principle that
\begin{equation*}
w_t+C_1u\leq	w_t+u\gamma(v)=\mathcal{A}^{-1}[u\gamma(v)]\leq C_2\mathcal{A}^{-1}[u]=C_2w=C_2\Gamma(w)+C_2.
\end{equation*}

 Secondly, 	we consider the case  $l=0$ and $k>0$.  By Lemma \ref{gamma2b} again, there are two generic positive constants $C_1,C_2>0$ depending on $\Omega$, $\gamma$ and the initial data such that
	\begin{equation*}
		C_1 s^{-k}\leq \gamma(s)\leq C_2\qquad\text{for}\;s\in[v_*,\infty).
	\end{equation*}
	Since $\gamma$ also satisfies \eqref{gsup} (note that \eqref{gsup} is only  explicitly needed here for Theorem \ref{TH2}), we infer by the upper control $v\leq Bw$ that 
	\begin{equation*}
		C_1B^{-k}w^{-k}\leq C_1v^{-k}\leq\gamma(v(x,t))\leq C_2,\qquad \text{for}\;(x,t)\in\Omega\times[0,T_{\mathrm{max}}),
	\end{equation*}
	which being substituted into the key identity \eqref{keyid} yields that
	\begin{equation*}
		w_t+C_1B^{-k}w^{-k}u\leq w_t+\gamma(v)u\leq C_2\A^{-1}[u]=C_2w=C_2\Gamma(w)+C_2.
	\end{equation*}	

Lastly, we consider the case $k\geq l>0$. Thanks to Lemma \ref{gamma2b} again, there holds
\begin{equation}\label{case3}
	C_1 s^{-k}\leq \gamma(s)\leq C_2 s^{-l}\qquad\text{for}\;s\in[v_*,\infty),
\end{equation} which together with the the two-sided control in Corollary \ref{revlem} yields that
\begin{equation}
	C_1B^{-k}w^{-k}\leq C_1v^{-k}\leq \gamma(v)\leq C_2v^{-l}\leq C_2A^{-l}w^{-l}
\end{equation}Then an application of the  comparison principle to the key 
identity gives rise to
\begin{equation}
w_t+C_1B^{-k}w^{-k}u\leq w_t+u\gamma(v)=\mathcal{A}^{-1}[u\gamma(v)]\leq C_2A^{-l}\mathcal{A}^{-1}[uw^{-l}].
\end{equation}
Finally, by \eqref{Gap1} and \eqref{Gap2} there is $C''=C(w_*,l)\geq0$ such that
\begin{align*}
w^{1-l}-\Gamma(w)\leq C''.
\end{align*}
Thus, it follows from \eqref{rev002} that
\begin{equation}
	\mathcal{A}^{-1}[uw^{-l}]\leq \Gamma(w)+C''.
\end{equation}
This completes the proof.
\end{proof} 
{
Next, we establish the following energy inequality.
\begin{lemma}\label{lmp} There are time-independent positive constants $\lambda_0>0$ and $C_0>0$ such that, for any $p>1+k$,
	\begin{equation*}
		\frac{d}{dt}\|w\|_p^{p}+\frac{\lambda_0 p(p-k-1)}{(p-k)^2} \|\nabla w^{\frac{p-k}{2}}\|_2^2+\lambda_0 p\| w\|_{p-k}^{p-k}\leq  C_0 p\int_\Omega\left( w^{p-1}\Gamma(w)+ w^{p-1}\right) d x\,.
	\end{equation*}
\end{lemma}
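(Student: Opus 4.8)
The plan is to run the standard $L^p$-testing scheme on the pointwise differential inequality \eqref{keyineq} supplied by Lemma~\ref{lemineq}. Fix $p>1+k$. Since $w=\mathcal{A}^{-1}[u]$ with $u$ a classical solution, $w$ is regular enough on $\bar{\Omega}\times(0,T_{\mathrm{max}})$ to differentiate $\|w\|_p^p$ in time and to integrate by parts below, and $w\ge w_*>0$ by Lemma~\ref{keylem}, so every power of $w$ appearing is well defined and $w^{p-1}\ge 0$. First I would multiply \eqref{keyineq} by $p\,w^{p-1}$ and integrate over $\Omega$, which produces $\frac{d}{dt}\|w\|_p^p$ from the $w_t$-term and leaves
\begin{equation*}
	\frac{d}{dt}\|w\|_p^p + C_1B^{-k}\,p\int_\Omega w^{p-1-k}u\,dx \le C_2A^{-l}\,p\int_\Omega w^{p-1}\bigl(\Gamma(w)+C_3\bigr)\,dx\,.
\end{equation*}

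Next I would unfold the dissipative integral using $u=w-\Delta w$ together with the Neumann condition $\nabla w\cdot\nu=0$, obtaining
\begin{equation*}
	\int_\Omega w^{p-1-k}u\,dx = \|w\|_{p-k}^{p-k} + (p-1-k)\int_\Omega w^{p-k-2}|\nabla w|^2\,dx\,,
\end{equation*}
and then rewrite the gradient term via the elementary identity $|\nabla w^{\frac{p-k}{2}}|^2=\frac{(p-k)^2}{4}\,w^{p-k-2}|\nabla w|^2$, so that it becomes $\frac{4(p-1-k)}{(p-k)^2}\|\nabla w^{\frac{p-k}{2}}\|_2^2$. Here the hypothesis $p>1+k$ does two things: it makes $p-1-k>0$, so this term is genuinely dissipative and stays on the left-hand side, and it makes $p-k>1$, so $\|w\|_{p-k}$ is an honest Lebesgue norm. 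Setting $\lambda_0:=C_1B^{-k}$ and discarding part of the (positive) gradient coefficient, since $\frac{4C_1B^{-k}p(p-1-k)}{(p-k)^2}\ge\frac{\lambda_0 p(p-k-1)}{(p-k)^2}$, I arrive at
\begin{equation*}
	\frac{d}{dt}\|w\|_p^p + \frac{\lambda_0 p(p-k-1)}{(p-k)^2}\|\nabla w^{\frac{p-k}{2}}\|_2^2 + \lambda_0 p\,\|w\|_{p-k}^{p-k} \le C_2A^{-l}\,p\int_\Omega w^{p-1}\bigl(\Gamma(w)+C_3\bigr)\,dx\,.
\end{equation*}

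To finish, I would absorb $C_3$ into the right-hand side: since $w\ge w_*>0$ and $\Gamma$ from \eqref{defGamma} is nondecreasing, $\Gamma(w)\ge\Gamma(w_*)$, whence $\Gamma(w)+C_3\le C\,(\Gamma(w)+1)$ for a constant $C=C(w_*,l,C_3)$, so the right-hand side is bounded by $C_0\,p\int_\Omega\bigl(w^{p-1}\Gamma(w)+w^{p-1}\bigr)\,dx$. All constants $C_1,C_2,C_3,A,B,k,l,w_*$, and hence $\lambda_0$ and $C_0$, are time-independent by Lemma~\ref{lemineq} (and, for the two-sided control behind it, Corollary~\ref{revlem}), which is precisely what the statement requires. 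I do not expect a real obstacle: this is a routine energy inequality, and the only points calling for care are the sign bookkeeping tied to $p>1+k$ (needed both for the dissipative gradient term and for $p-k>1$), the justification of the time differentiation and the integration by parts through the regularity of $w=\mathcal{A}^{-1}[u]$, and checking that no constant secretly acquires a dependence on $T$ or on $\sup_\Omega w$, the latter not yet being available at this stage.
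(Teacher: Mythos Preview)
Your proof is correct and follows exactly the paper's route: multiply \eqref{keyineq} by $pw^{p-1}$, integrate, and expand $\int_\Omega w^{p-1-k}u\,dx$ via $u=w-\Delta w$ and the Neumann condition to extract the $\|w\|_{p-k}^{p-k}$ and $\|\nabla w^{(p-k)/2}\|_2^2$ terms, then set $\lambda_0=C_1B^{-k}$. The paper's own proof is even terser (it just says ``collecting the above estimates''), and your extra care in absorbing $C_3$ into the ``$+1$'' is more than the paper provides; note, however, that your pointwise bound $\Gamma(w)+C_3\le C(\Gamma(w)+1)$ tacitly needs $\Gamma(w_*)>-1$, a harmless technicality since one may equally well keep $+C_3$ in the conclusion without affecting the downstream iteration.
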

\begin{proof}
	Multiplying the inequality \eqref{keyineq} by $w^{p-1}$ for some $p>1+k$, we obtain that
	\begin{equation*}
		\frac{1}{p}\frac{d}{dt}\|w\|_p^p+C_1B^{-k}\int_\Omega uw^{p-k-1}\ d x \leq C_2A^{-l}\int_\Omega w^{p-1}(\Gamma(w)+C_3) d x.
	\end{equation*}	
Recalling that $w-\Delta w=u$, we observe that
	\begin{align*}
		\int_\Omega w^{p-k-1}u\ dx & = \int_\Omega w^{p-k-1}(w-\Delta w)\ d x = \|w\|_{p-k}^{p-k} + (p-k-1) \int_\Omega w^{p-k-2} |\nabla w|^2\ dx \\
		& =  \|w\|_{p-k}^{p-k} + \frac{4(p-k-1)}{(p-k)^2} \|\nabla w^{\frac{p-k}{2}}\|_2^2.
	\end{align*}
The assertion follows by collecting the above estimates and this completes the proof.
\end{proof}

\noindent\textbf{Proof of Theorem \ref{TH2}}.
We point out that energy inequality established for $w$ in Lemma \ref{lmp} is exactly the same as the one in \cite[Lemma 4.3]{JL21} for $v$ (see also \cite[Lemma 6.3]{JLZ22} for the same energy inequality established for an auxiliary function $\tilde{S}$). Thus, starting from this energy inequality, we can proceed in the same manner by delicate Moser-Alikakos iteration argument to prove that under the same assumption \eqref{gamm3} as done in \cite{JL21,JLZ22}, there is a time-independent constant $C>0$  such that
\begin{equation}
\sup\limits_{0\leq t<T_{\mathrm{max}}}\|w(\cdot,t)\|_\infty\leq C.
\end{equation}
The uniform-in-time $L^\infty$-bound for $v$ follows as well by Corollary \ref{cor1}.  As a result, $\gamma$ is bounded from above and below by time-independent positive constants on $\Omega\times[0,T_{\mathrm{max}})$ and we may apply Theorem \ref{TH1} to conclude Theorem \ref{TH2a} now.
\qed
}
\bigskip

\noindent\textbf{Acknowledgments.} 
This work is supported by National Natural Science Foundation of China (NSFC)
under grants No. 12271505 and No. 12071084, and by the Training Program of Interdisciplinary Cooperation of Innovation Academy for Precision Measurement Science and Technology, CAS (No. S21S3202). 

Jiang is grateful to 
Prof. Philippe Lauren\c cot for 
helpful discussions and insightful comments which improve the proof. Both 
authors  thank Prof. Boling Guo for his constant encouragement and helpful 
suggestions. {The authors thank the anonymous reviewers for their helpful comments.}

\end{document}